\author{Benjamin \textsc{Druart}\thanks{Université de Grenoble I, Département de Mathématiques, Institut Fourier, UMR 5582 du CNRS, 38402 Saint-Martin d'Hères Cedex, France. email : Benjamin.Druart@ujf-grenoble.fr}
\thanks{The research leading to these results has recieved funding from the European Research Council under the European Community's Seventh Framework Programme FP7/2007-2013 Grant Agreement no. 278722 and was partially supported by ANR-13-BS01-0006-01 ValCoMo}
}
\title{Definable subgroups in $SL_2$ over a $p$-adically closed field}
\newtheorem{defi}{Definition}
\newtheorem{fact}{Fact}
\newtheorem{prop}[defi]{Proposition}
\newtheorem{cor}[defi]{Corollary}
\newtheorem{lem}[defi]{Lemma}
\newtheorem{thm}{Theorem}
\newtheorem{conj}{Conjecture}
\theoremstyle{definition}
\newtheorem*{rmk}{Remark}
\newcommand{\Qp}{\mathbb{Q}_p}
\newcommand{\Zp}{\mathbb{Z}_p}
\newcommand{\Z}{\mathbb{Z}}
\newcommand{\Fp}{\mathbb{F}_p}
\newcommand{\OO}{\mathcal{O}}
\newcommand*{\slfrac}[2]%
{\ensuremath{%
    #1/\!\raisebox{-.65ex}{\ensuremath{#2}}}}
\newenvironment{lembis}[1]
  {%
   \addtocounter{defi}{-1}%
   \begin{lem}}
  {\end{lem}}
  \newenvironment{propbis}[1]
  {%
   \addtocounter{defi}{-1}%
   \begin{prop}}
  {\end{prop}}
  \newenvironment{thmbis}[1]
  {%
   \addtocounter{thm}{-1}%
   \begin{thm}}
  {\end{thm}}
\begin{document}

\maketitle

%\tableofcontents
%\begin{abstract}
%We show that there exist a finite number of Cartan subgroups up to conjugacy in $SL_2(\Qp)$ and we describe all of them. We show that the Cartan subgroup consisting of all diagonal matrices is generous and it is the only one up to conjugacy.
%\end{abstract}
%\textit{Keywords } p-adic field ; Cartan subgroup ; generosity
%\\ \textit{MSC2010 } 20G25 ; 20E34 ; 11E57

\bigskip
\begin{abstract}
The aim of this paper is to describe all definable subgroups of $SL_2(K)$, for $K$ a $p$-adically closed field. We begin by giving some "frame subgroups" which contain all nilpotent or solvable subgroups of $SL_2(K)$. A complete description is givien for $K$ a $p$-adically closed field, some results are generalizable to $K$ a field elementarily equivalent to a finite extension of $\Qp$ and an almost complete description is given for $\Qp^{an}$. We give also some indications about genericity and generosity in $SL_2(K)$.
\end{abstract}
\textit{Keywords } Cartan subgroups ; p-adics ; semialgebraic and subanalytic subgroups ; generosity
\\ \textit{MSC2010 } 03C60 ; 20G25

\section*{Introduction}
Studying definable groups and their properties is an important part of Model Theory. 
Frequently, general hypotheses of Model Theory, such as stability or NIP, are used in order to specify the context of the analysis. In this article we adopt a different viewpoint and concentrate on the study of a specific  class of groups, namely $SL_2(K)$ where $K$ is a $p$-adically closed field. 

There are several reasons why we adopt this approach, the first of which is the exceptional algebraic and definable behaviour of linear groups of small dimension, in particular of $SL_2$. A good example of this phenomenon is in \cite[9.8]{cartan}, where the case $SL_2$ is treated separately and which is also a source of inspiration for the current article. In \cite{SL2R}, Gismatulin, Penazzi and Pillay work in $SL_2(\mathbb{R})$ and its type space to find a counterexample to a conjecture of Newelski. Although we do not find a similar result for $SL_2(\Qp)$, we hope that  this study will contribute to the problem. 
We are also motivated by finding concrete examples of definable groups in unstable but model-theoretically well behaved structures. 
%Many people adopt a combinatorics point of vue : which configuration is possible in which context (existence of chain condition  for stable, $\omega$-stable, $\mathfrak{M}_c$, or NIP groups ...). Here we work on a different way. We want to describe all definable subgroups of one specific group, in fonction of theirs algebraic properties. Our methode give then examples of definable groups in different context. 

The starting point of our study is Cartan subgroups. They have been extensively studied in the context of groups of finite Morley rank \cite{FrecJal}\cite{Frec}, and for definable groups  in $o$-minimal structures \cite{cartan}. Here they will be used to describe definable subgroups in $SL_2(K)$. 
Motivated by the work in \cite{cartan}, Jaligot raised the following question: 
Does a definable group in some specific context have a finite number of conjugacy classes of Cartan subgroups ? Here, we answer this question for $SL_2(K)$ (Theorem \ref{Cartan}), where $K$ varies over a large class of fields with few contraints that will be detailed in section 1. This class includes algebraically closed fields, real closed fields, $p$-adically closed fields, pseudoreal closed fields, pseudo $p$-adically closed fields, both with bounded Galois group. We have thus a large overview of model-theoretical contexts (stable, $NIP$, simple, $NTP_2$ ...). 
%Jaligot ask the question to know if a definable group in some specific context has  a finite number of conjugacy class of Cartan subgroups. Baro, Jaligot and Otero answer affirmatively it for definable groups in $o$-minimal structure \cite{cartan}. The question is open for other contexts and especially for $p$-adically closed fields. Here we answer the question for $SL_2(K)$ (Theorem \ref{Cartan}). 
%The demonstration involves elementary algebaic technics, then it works for many deffirent contexts : $K$ can be an  

Our study shows the existence of some "frame" subgroups (they are maximal nilpotent or solvable subgroups - Corollary \ref{nilpotent} and Proposition \ref{solvable}), which contain all the nilpotent or solvable subgroups. 
The description of "frame" subgroups is uniform for different fields. A finer description depends on the model-theoretical nature of the underlying field. The cases of algebraically closed and real closed fields can be treated very quickly, the $p$-adic case is the first non-trivial case. 

We then give a complete description of definable subgroups of $SL_2(K)$, for $K$ a $p$-adically closed field. 
Theorem \ref{L3} gives us a full description of definable subgroups of anisotropic tori of $SL_2(K)$: they form an infinite descending chain of definable subgroups. 

In the non solvable case, topological properties determine the nature of the subgroups in question. 
In theorem \ref{unbounded}, we show that if $H$ is a non solvable and unbounded definable subgroup of $SL_2(K)$, then $H=SL_2(K)$. 

Finally a table sums up informations about definable subgroups of $SL_2(K)$. Some propositions for $p$-adically closed fields are true for some henselian fields of characteristic 0, in particular finite extensions of $p$-adically closed fields.

The final section  discusses notions of genericity, generosity and which definable subgroups of $SL_2(K)$ are generous (Corollary \ref{generous}). 

A natural question is whether $SL_2(K)$ has the same definable subgroups if we extend the language. A natural way to answer this question is to explore $p$-minimal expansions of a $p$-adically closed field. $p$-minimality is the $p$-adic analogue of $o$-minimality, it defines  a class of expansions whose definable sets stay "close" to the definable sets in field language. Unfortunately, $p$-minimality is not well kown, especially we ignore if there exists a dimension theory compatible with topology. Thus we restrict ourselves to $\Qp^{an}$ the expansion of $\Qp$ where we adjoin all "restricted analytic functions". This latter is an important and extensively studied example of $p$-minimal expansion of $\Qp$.

%PRELIMINARIES
\section{Preliminaries in Model Theory}
In this section we review notions from model theory.  For further details ,we refer to \cite{hodges}. 

A \emph{structure} $M$ is a non empty set together with some distinguished fonctions, relations and constants, these form the signature. In order to define subets of $M^n$, one introduce symbols, one for each element of the signature. The symbols together with the equality give the \emph{language}. For example a group is naturally considered in the language $\mathcal{L}_G=\{\cdot,\  ^{-1}, e\}$ for the composition law, the inversion and the neutral element and a field in $\mathcal{L}_R=\{+,-,\cdot, 0,1\}$. Definitions use \emph{first-order formulas} which are  coherent sequences formed with symbols from the language, variables, logical connectives and quantifiers. Only, quantification on the elements of $M$ is allowed, this excludes quantifying a part of $M$ or integers.

Two structures are \emph{elementarily equivalent} if they satisfy exactly the same first order formulas without free variable (sentences), then their \emph{theory} is the set of all formulas verified by the structures. 
For a structure $M$, a set $X\subseteq M^n$ is said \emph{definable} if there exists a formula $\varphi(\bar{x},\bar{a})$ with some parameter $\bar{a}\subseteq M$, such that elements of X are exactly the elements of $M^n$ verifying the formula $\varphi$, then we note $X=\varphi(M)$. A group is definable in $M$ if its set of elements is definable and the graph of the composition law is definable. 
A structure $N$ is \emph{interpetable} if there exits a definable set $X$ of $M^n$, a definable equivalence relation $E$ in $M^{2n}$ and a bijection $f$ from $\slfrac{X}{E}$ to $N$, such that the inverse image of fonctions and relations of $N$ by $f$ are definable in $X$.

For example, $SL_2(K)$ can be seen as a structure of group in the group language $\mathcal{L}_G=\{\cdot,^{-1}, e\}$, it is also a definable group in the structure of field $K$ in the language $\mathcal{L}_R=\{+,-,\cdot , 0,1\}$. The question in this paper is : what are the definable subgroups of $SL_2(K)$ in the field language $\mathcal{L}_R$ ? 

An \emph{expansion} of a structure $M$ is the structure where we add some symbols in the language. For example $\Qp^{an}$ is the structure of $p$-adic number studied with one symbol for every "restricted analytic functions". 

\bigskip

\textbf{Notation: }If $K$ is a field, we denote by $K^+$ and $K^{\times}$ respectively the additive and the multiplicative groups of $K$, and $\widetilde{K}^{alg}$ will be the algebraic closure of $K$. For a group $G$, we denote by $(G)^n$ the set of $n^{th}$ powers from $G$.
The $n^{th}$ cartesian power of a set $K$ will be noted $K^n$.

Definable means definable with parameters.

%CARTAN SUBGROUPS
\section{Cartan subgroups in $SL_2(K)$}

In this section, we work in a very general setting, $K$ will be an infinite field such that $\slfrac{K^{\times}}{(K^{\times})^2}$ is  finite and $char{K}\neq 2$. These properties are preserved if we replace $K$ by an elementarily equivalent field. They are verified by algebraically closed fields, real closed fields, $p$-adically closed fields, finite extensions of $\Qp$, pseudoreally closed fields, pseudo-$p$-adically closed fields with bounded Galois group. This last hypothesis means that for all $n$, $K$ has finitely many extensions of degree $n$. We know by \cite{Samaria} that pseudoreal closed fields and pseudo $p$-adically closed fields with such hypothesis are $NTP_2$. First we are studying Cartan subgroups of $SL_2(K)$, we are showing that there exists a finite number of them up to conjugacy (Theorem \ref{Cartan}). Since in general, in $SL_2$ every element is unipotent or semisimple, we conclude in Corollary \ref{union} that a finite number of Cartan subgroups suffice to describe all semisimple groups. After we define some "frame" groups (Corollary \ref{nilpotent} and Proposition \ref{solvable}), which are definable in the pure group and such that they contain every  nilpotent or solvable subgroups.

The next definition is due to Chevalley, it defines Cartan subgroups in an abstract group. We show, as one would expect it, that in our context Cartan subgroups are maximal tori.

%We note $v_p : \Qp\longrightarrow \Z\bigcup\{+\infty\}$ the $p$-adic-valuation, and $ac : \Qp^{\times} \longrightarrow \mathbb{F}_p$ the angular component defined by $ac(x)=res(p^{-v_p(x)}x)$ where $res: \Qp\longrightarrow \mathbb{F}_p$ is the residue map. 

%With these notations, if $p\neq 2$, an element $x\in \Qp^{\times}$ is a square if and only if $v_p(x)$ is even and $ac(x)$ is a square in $\mathbb{F}_p$. For $p=2$, an element $x\in\mathbb{Q}_2$ can be written $x=2^nu$ with $n\in \Z$ and $u\in \mathbb{Z}_2^{\times}$, then $x$ is a square if $n$ is even and $u\equiv 1 (\mbox{mod } 8)$ \cite{JPS}.

%\begin{fact}[\cite{JPS}]\label{Serre}
%If $p\neq2$, the group $\Qp^{\times} / (\Qp^{\times})^2$ is isomorphic to $\Z/2\Z \times \Z/2\Z$, it has for representatives $\{1, u, p, up\}$, where $u\in \Zp^{\times}$ is such that $ac(u)$ is not a square in $\mathbb{F}_p$

%The group $\mathbb{Q}_2^{\times}/(\mathbb{Q}_2^{\times})^2$ is isomorphic to $\Z/2\Z \times \Z/2\Z \times \Z/2\Z$, it has for representatives $\{\pm1,\pm 2, \pm 5, \pm10\}$. 
%\end{fact}

%\subsection{Cartan subgroups}
\begin{defi}
Let $G$ be a group. A subgroup $C$ is a Cartan subgroup if : 
\begin{enumerate}
	\item $C$ is a maximal nilpotent subgroup ;
	\item every subgroup of finite index $X\leq C$, is of finite index in its normalizer $N_G(X)$.
\end{enumerate}
\end{defi}

\begin{rmk}
If $G$ is infinite, then every Cartan subgroup is infinite. Indeed if $C$ is a finite Cartan subgroup of $G$ then $\{1\}$ is a subgroup of finite index in $C$ but is of infinite index in $N_G(\{1\})=G$.
\end{rmk}

For any $\delta\in K^{\times}\backslash (K^{\times})^2$, we put : 
\begin{align}
Q_1 &=\left\{\left(\begin{array}{cc}a & 0 \\0 & a^{-1}\end{array}\right)\in SL_2(K)\mid a\in K^{\times}\right\} \nonumber\\ 
Q_{\delta}&=\left\{\left(\begin{array}{cc}a & b \\b\delta & a\end{array}\right)\in SL_2(K) \mid a,b\in K \mbox{ and } a^2-b^2 \delta=1\right\} \nonumber
\end{align}

\begin{lem}\label{centre}
\begin{align} 
 \forall x\in Q_1\backslash \{I,-I\} \quad C_{SL_2(K)}(x)=Q_1\nonumber
\\ \forall x\in Q_{\delta}\backslash \{I,-I\} \quad C_{SL_2(K)}(x)=Q_{\delta} \nonumber
\end{align}
\end{lem}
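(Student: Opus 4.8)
The plan is to establish the two equalities by a direct matrix computation: the inclusion $\supseteq$ will come from commutativity of the tori, and the inclusion $\subseteq$ from solving a small linear system, using $x\neq\pm I$ as the non-degeneracy hypothesis.

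First I would record that $Q_1$ and $Q_{\delta}$ are abelian. For $Q_1$ this is clear, diagonal matrices commuting. For $Q_{\delta}$, set $J=\left(\begin{smallmatrix}0 & 1\\ \delta & 0\end{smallmatrix}\right)$; every element of $Q_{\delta}$ has the form $aI+bJ$, and $J^2=\delta I$, so $Q_{\delta}$ lies inside the commutative subring $K[J]$ of $M_2(K)$ and is therefore abelian. Consequently $Q_1\subseteq C_{SL_2(K)}(x)$ for every $x\in Q_1$ and $Q_{\delta}\subseteq C_{SL_2(K)}(x)$ for every $x\in Q_{\delta}$, which gives the inclusion $\supseteq$ in both lines.

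For the reverse inclusion I would argue entrywise. Take $x=\left(\begin{smallmatrix}a & 0\\ 0 & a^{-1}\end{smallmatrix}\right)\in Q_1\setminus\{I,-I\}$: the hypothesis $x\neq\pm I$ says exactly that $a^2\neq 1$, i.e.\ $a\neq a^{-1}$. For a general $g=\left(\begin{smallmatrix}p & q\\ r & s\end{smallmatrix}\right)\in SL_2(K)$, comparing the off-diagonal coefficients of $gx=xg$ yields $q(a-a^{-1})=0$ and $r(a-a^{-1})=0$, hence $q=r=0$; so $g$ is diagonal of determinant $1$, that is $g\in Q_1$. Similarly, take $x=\left(\begin{smallmatrix}a & b\\ b\delta & a\end{smallmatrix}\right)\in Q_{\delta}\setminus\{I,-I\}$: here $x\neq\pm I$ forces $b\neq 0$ (if $b=0$ then $a^2=1$). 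Expanding $gx=xg$ for $g=\left(\begin{smallmatrix}p & q\\ r & s\end{smallmatrix}\right)\in SL_2(K)$, the $(1,1)$- and $(1,2)$-coefficients give $b(q\delta-r)=0$ and $b(p-s)=0$, so $r=q\delta$ and $p=s$ (the other two coefficients only repeat these relations). Thus $g=\left(\begin{smallmatrix}p & q\\ q\delta & p\end{smallmatrix}\right)$ with $p^2-q^2\delta=\det g=1$, i.e.\ $g\in Q_{\delta}$.

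I do not expect a real obstacle here: the only point to watch is that the condition $x\notin\{I,-I\}$ is precisely what makes the scalar occurring in the linear system ($a-a^{-1}$ in the first case, $b$ in the second) nonzero, hence invertible, which is what pins $g$ down inside the torus. It is worth noting that the argument for $Q_{\delta}$ never uses a square root of $\delta$, so the statement and its proof are valid over $K$ itself and not merely over a quadratic extension.
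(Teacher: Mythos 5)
Your proof is correct; the paper itself gives no argument for this lemma (it states that ``the checking of these equalities is left to the reader''), and your entrywise computation, with $a\neq a^{-1}$ in the split case and $b\neq 0$ in the anisotropic case supplying the needed non-degeneracy, is exactly the intended verification. The closing remark that no square root of $\delta$ is needed is a nice touch but not required.
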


The checking of these equalities is left to the reader.

\begin{prop}
The groups $Q_1$ and $Q_{\delta}$ are Cartan subgroups of $SL_2(K)$ 
\end{prop}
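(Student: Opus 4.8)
The plan is to verify the two defining conditions of a Cartan subgroup directly for $Q_1$ and $Q_\delta$, using Lemma \ref{centre} as the main tool. I will treat $Q_1$ in detail; the argument for $Q_\delta$ is identical in structure (and can be reduced to $Q_1$ over $\widetilde{K}^{alg}$, since $Q_\delta$ becomes diagonalizable there).

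First I would establish condition (1), that $Q_1$ is a maximal nilpotent subgroup. Nilpotence is clear since $Q_1$ is abelian (it is isomorphic to $K^\times$ via $\mathrm{diag}(a,a^{-1})\mapsto a$). For maximality, suppose $N$ is a nilpotent subgroup with $Q_1 \subsetneq N \subseteq SL_2(K)$. Pick $x \in Q_1 \setminus \{I,-I\}$; then $x \in Q_1 \subseteq N$, and since $N$ is nilpotent, the centralizer $C_N(x)$ meets every nontrivial normal subgroup of $N$, and more to the point a nilpotent group satisfies the normalizer condition, so $Q_1 = C_{SL_2(K)}(x) \cap N$ would be properly contained in $N_N(Q_1)$. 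Hence it suffices to show $N_{SL_2(K)}(Q_1) = Q_1 \cup wQ_1$ where $w$ is the Weyl element $\left(\begin{smallmatrix} 0 & 1 \\ -1 & 0\end{smallmatrix}\right)$, and that no element of $wQ_1$ together with $Q_1$ generates a nilpotent group — indeed $w$ inverts $Q_1$, so $\langle Q_1, w\rangle$ is the infinite dihedral-type group which is not nilpotent (its lower central series does not terminate, as $K^\times$ is infinite). Actually the cleanest route: any $y \in N \setminus Q_1$ normalizes $Q_1$ (by the normalizer condition in the nilpotent group $N$, applied to the subgroup $Q_1$... more carefully, to a suitable term), so $y \in N_{SL_2(K)}(Q_1) = N_{SL_2(K)}(C_{SL_2(K)}(x))$; a short matrix computation shows this normalizer is $Q_1 \cup wQ_1$; and then $\langle x, y \rangle$ with $y$ inverting $x$ is not nilpotent, contradiction.

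Next, condition (2): let $X \leq Q_1$ have finite index in $Q_1$. Since $Q_1 \cong K^\times$ is abelian, $X$ is abelian and infinite (as $[Q_1 : X] < \infty$ and $Q_1$ is infinite), so $X$ contains some $x \notin \{I,-I\}$. Then $N_{SL_2(K)}(X) \subseteq N_{SL_2(K)}(\ ?\ )$ — here I would argue that $C_{SL_2(K)}(X) = \bigcap_{x\in X} C_{SL_2(K)}(x) = Q_1$ by Lemma \ref{centre} (picking any one nontrivial $x$ already gives $Q_1$), and that $N_{SL_2(K)}(X)$ normalizes $C_{SL_2(K)}(X) = Q_1$, hence $N_{SL_2(K)}(X) \subseteq N_{SL_2(K)}(Q_1) = Q_1 \cup wQ_1$, which has $Q_1$ as a subgroup of index $2$. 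Therefore $[N_{SL_2(K)}(X) : X] \leq [Q_1 \cup wQ_1 : X] = 2[Q_1:X] < \infty$, as required.

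The step I expect to be the main obstacle is computing $N_{SL_2(K)}(Q_1)$ and checking that $\langle Q_1, w\rangle$ fails to be nilpotent (equivalently, pinning down maximal nilpotence), since everything else follows formally from Lemma \ref{centre} and the abelian structure of $Q_1$ and $Q_\delta$. For $Q_\delta$ one should note that $\slfrac{K^\times}{(K^\times)^2}$ finiteness and $\mathrm{char}\,K \neq 2$ are exactly what guarantee $Q_\delta$ is an infinite abelian group (it is the norm-one torus of $K[\sqrt\delta]$, infinite since $K$ is infinite), so the same counting argument goes through verbatim with $Q_\delta$ in place of $Q_1$ and the corresponding anisotropic Weyl element.
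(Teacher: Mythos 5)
Your proof is correct and follows essentially the same route as the paper: Lemma \ref{centre} pins down centralizers, the normalizer $N_{SL_2(K)}(Q_1)=Q_1\cdot\langle\omega\rangle$ is computed, its non-nilpotence is read off from the lower central series $\Gamma_i=Q_1^{2^i}$ (infinite because squaring on $K^{\times}$ is two-to-one), and the normalizer condition together with the index-$2$ bound gives maximality and condition (2), with the abelian $Q_{\delta}$ handled analogously. One caution: your final reformulation ``$\langle x,y\rangle$ with $y$ inverting $x$ is not nilpotent'' is false when $x$ is a $2$-power torsion element (dihedral $2$-groups are nilpotent), so you must argue, as in your preceding sentence and as the paper does, with the whole group $Q_1\cdot\langle\omega\rangle\leq N$, whose lower central series $Q_1^{2^i}$ never reaches $\{I\}$.
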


\begin{proof}
The group $Q_1$ is abelian and the normalizer of $Q_1$ is :
$$N_{SL_2(K)}(Q_1)=Q_1\cdot<\omega> \quad\mbox{ where } \quad\omega =\left(\begin{array}{cc}0 & 1 \\-1 & 0\end{array}\right)$$
For $X$ a subgroup of $Q_1$, if $g\in N_{SL_2(K)}(X)$ and $x\in X$, then, using lemma \ref{centre}
$$Q_1 = C_{SL_2(K)}(x)=C_{SL_2(K)}(x^g)=C_{SL_2(K)}(x)^g=Q_1^g$$ 
It follows that $N_{SL_2(K)}(X)\leq N_{SL_2(K)}(Q_1)=Q_1\cdot <\omega>$, as for $t\in Q_1$, $t^\omega=\omega^{-1}t\omega=t^{-1}$, we have $N_{SL_2(K)}(X)=Q_1\cdot<\omega>$ and if $X$ of finite index $k$ in $Q_1$, then $X$ is of index $2k$ in $N_{SL_2(K)}(X)$.

If we denote $\Gamma_i$ the descending central series of $N_{SL_2(K)}(Q_1)$, we can see that for $t$ in $Q_1$, $[\omega , t]=t^2$ and so we have :
$$\Gamma_0=N_{SL_2(K)}(Q_1)$$ 
$$\Gamma_1=[N_{SL_2(K)}(Q_1),N_{SL_2(K)}(Q_1)]=Q_1^2$$ 
$$\Gamma_i=[N_{SL_2(K)}(Q_1),\Gamma_{i-1}]=Q_1^{2^i}$$ 
Observing that $Q_1\cong K^{\times}$, we can conclude that $\Gamma_i$ never reaches $\{I\}$ because $Q_1$ is infinite and $Q_1^{2^i}$ is of finite index in $Q_1^{2^{i-1}}$ (indeed $\slfrac{Q_1^{2^{i-1}}}{Q_1^{2^i}}$ is in bijection with $\slfrac{K^{\times}}{(K^{\times})^2}$). Thus $N_{SL_2(K)}(Q_1)$ is not nilpotent.
By the normalizer condition for nilpotent groups, if $Q_1$ is properly contained in a nilpotent group $C$, then $Q_1 < N_C(Q_1) \leq C$, here $N_C(Q_1)=Q_1\cdot<\omega>$ which is not nilpotent, a contradiction. It finishes the proof that $Q_1$ is a Cartan subgroup.

For $\delta \in K^{\times}\backslash (K^{\times})^2$, the group $Q_{\delta}$ is abelian. Since for all subgroups $X$ of $Q_{\delta}$ not contained in $Z(SL_2(K))$, $C_{SL_2(K)}(X)=Q_{\delta}$, it follows that $N_{SL_2(K)}(X)=N_{SL_2(K)}(Q_{\delta})=Q_{\delta}$, and if $X$ is of finite index in $Q_{\delta}$ then $X$ is of finite index in its normalizer.  By the normalizer condition for nilpotent groups, $Q_{\delta}$ is maximal nilpotent. 
\end{proof}

\begin{prop}\label {tr}
\begin{enumerate}
\item $Q_1^{SL_2(K)} = \left\{ A\in SL_2(K)\mid tr(A)^2-4 \in (K^{\times})^2 \right\} \bigcup \left\{I,-I\right\}$

\item For any $\delta \in K^{\times}\backslash(K^{\times})^2$, there exist $n\in\mathbb{N}$ and $\mu_1, ... ,\mu_n\in GL_2(K)$ such that $$\bigcup _{i=1}^{n}Q_{\delta}^{\mu_i\cdot SL_2(K)} = \left\{ A\in SL_2(K)\mid tr(A)^2-4 \in \delta\cdot(K^{\times})^2 \right\} \bigcup \left\{I,-I\right\}$$
\end{enumerate}
\end{prop}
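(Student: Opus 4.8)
The plan is to work with the action of $SL_2(K)$ (and $GL_2(K)$) by conjugation and to use the trace as the basic invariant of a conjugacy class together with the structure of the centralizers given by Lemma \ref{centre}. The key observation is that for a matrix $A \in SL_2(K)$ with $A \neq \pm I$, the characteristic polynomial is $X^2 - tr(A)X + 1$, whose discriminant is $tr(A)^2 - 4$; the $SL_2$-conjugacy class of $A$ is essentially controlled by this discriminant up to squares.

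First I would treat part (1). Given $A \in Q_1 \setminus \{I,-I\}$, say $A$ has diagonal entries $a, a^{-1}$ with $a \neq \pm 1$, one computes $tr(A)^2 - 4 = (a + a^{-1})^2 - 4 = (a - a^{-1})^2 \in (K^\times)^2$, so the inclusion $\subseteq$ is immediate (and $\{I,-I\}$ is handled trivially). Conversely, suppose $A \in SL_2(K)$, $A \neq \pm I$, with $tr(A)^2 - 4 = c^2$ for some $c \in K^\times$. Then the characteristic polynomial has two distinct roots $\lambda, \lambda^{-1} = \frac{tr(A) \pm c}{2}$ lying in $K$, so $A$ is diagonalizable over $K$; hence $A$ is $GL_2(K)$-conjugate to $\mathrm{diag}(\lambda, \lambda^{-1}) \in Q_1$. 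To upgrade from $GL_2(K)$-conjugacy to $SL_2(K)$-conjugacy, I would use that if $gAg^{-1} \in Q_1$ with $g \in GL_2(K)$, then since $C_{SL_2(K)}(gAg^{-1}) = Q_1$ contains elements of every determinant-compatible form — more precisely, I would scale $g$ by a scalar matrix $\lambda I$ (which does not change conjugation) to adjust $\det g$; this works provided every element of $K^\times$ is a square times... no. The cleaner route: one checks directly that $N_{GL_2(K)}(Q_1)/N_{SL_2(K)}(Q_1)$ maps onto $K^\times/(K^\times)^2$ via determinant isn't needed; instead, given diagonal $D = gAg^{-1}$, conjugating $D$ by $\mathrm{diag}(t,1)$ keeps it diagonal and multiplies nothing, and conjugating by $\omega$ swaps the diagonal, so the centralizer in $GL_2$ of $D$ together with $\omega$ already realizes enough determinants — specifically $\det$ of $\mathrm{diag}(t,1)$ ranges over all of $K^\times$, so we can correct $\det g$ to $1$. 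This gives $A \in Q_1^{SL_2(K)}$.

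For part (2), the same strategy applies but the arithmetic obstruction is genuine. If $A \neq \pm I$ and $tr(A)^2 - 4 = \delta d^2$ with $d \in K^\times$, then the eigenvalues lie in $K(\sqrt\delta)$ but not in $K$, so $A$ is not diagonalizable over $K$; instead $A$ is conjugate over $K$ to a companion-type matrix, and one checks it is $GL_2(K)$-conjugate to some matrix of the form $\left(\begin{array}{cc} a & b \\ b\delta' & a \end{array}\right)$ with $\delta' \in \delta \cdot (K^\times)^2$, i.e.\ into $\mu Q_\delta \mu^{-1}$ for a suitable $\mu \in GL_2(K)$ depending only on the class of $\delta'/\delta$ in $(K^\times)^2$... but that class is trivial, so actually over $GL_2(K)$ all such $A$ with fixed discriminant-class $\delta \cdot (K^\times)^2$ are conjugate into a single $GL_2$-conjugate of $Q_\delta$. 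The subtlety is the passage from $GL_2(K)$ to $SL_2(K)$: unlike the split case, $C_{GL_2(K)}(A) \cong K(\sqrt\delta)^\times$ and the determinant (= norm) map $K(\sqrt\delta)^\times \to K^\times$ need not be surjective, so a single $SL_2(K)$-conjugacy class of $Q_\delta$ need not capture the whole set. The finitely many $\mu_i$ are exactly chosen so that $\{\det \mu_i\}$ represents $K^\times / N_{K(\sqrt\delta)/K}(K(\sqrt\delta)^\times)$; this index is finite because it is a quotient of $K^\times/(K^\times)^2$, which is finite by the standing hypothesis on $K$. Then a counting/covering argument shows $\bigcup_i Q_\delta^{\mu_i \cdot SL_2(K)}$ exhausts the set $\{A : tr(A)^2 - 4 \in \delta(K^\times)^2\} \cup \{I,-I\}$.

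The main obstacle I anticipate is precisely this $GL_2$-versus-$SL_2$ bookkeeping in part (2): identifying $K^\times / N_{K(\sqrt\delta)/K}(K(\sqrt\delta)^\times)$ as the correct indexing set, verifying it is a quotient of $K^\times/(K^\times)^2$ (using $N(x) = x \bar x$ and that $\bar x / x$... rather, that the norm contains all squares since $N(t) = t^2$ for $t \in K^\times \subseteq K(\sqrt\delta)^\times$), and checking that distinct classes genuinely give distinct $SL_2(K)$-orbits while the listed $\mu_i$ suffice. The diagonalization and trace computations in part (1), and the "companion form" normalization in part (2), are routine linear algebra over $K$ and I would not belabor them.
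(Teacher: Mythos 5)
Your proposal is correct, and for part (1) and for the reduction of part (2) to $GL_2(K)$-conjugacy it follows the paper essentially verbatim: the trace/discriminant computation, the diagonalization over $K$ (respectively the normal form over $K(\sqrt{\delta})$ read back in a $K$-basis), and the determinant-correcting conjugation by $\mathrm{diag}(t,1)$ in the split case (the paper's $\tilde{P}$) are all the same; your first, abandoned idea of rescaling by a scalar matrix would indeed not have worked. Where you genuinely diverge is the finiteness step in part (2). The paper argues globally on automorphisms: conjugation realizes $Ext(S)\cong PGL_2(K)$ containing $Int(S)\cong PSL_2(K)$ with quotient $K^{\times}/(K^{\times})^2$, finite by the standing hypothesis, so one fixed set of representatives $\mu_1,\dots,\mu_n$ of this quotient works uniformly for every $\delta$. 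You instead go through the centralizer $C_{GL_2(K)}(Q_{\delta})\cong K(\sqrt{\delta})^{\times}$, on which the determinant is the field norm, and index the $\mu_i$ by $K^{\times}/N_{K(\sqrt{\delta})/K}(K(\sqrt{\delta})^{\times})$, with finiteness because the norm group contains $N(K^{\times})=(K^{\times})^2$. Your route is finer --- it bounds the actual number of $SL_2(K)$-classes by the norm index rather than by $|K^{\times}/(K^{\times})^2|$, and it makes the arithmetic obstruction explicit --- at the price of a $\delta$-dependent indexing set and some transporter bookkeeping; the paper's route is coarser but uniform in $\delta$ and never leaves $K$. Both fully establish the statement as written.
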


If $A$ is in $Q_1^{SL_2(K)}$ or in one of the $Q_{\delta}^{\mu_i\cdot SL_2(K)}$, $A$ is said to be a \textit{semi-simple} element, it means that there exists a field extension $K'/K$ such that $A$ is diagonalisable in $SL_2(K')$. A commutative subgroup  such that all elements are semi-simple is a \textit{torus}. $Q_1$ is a \textit{split torus} and the $Q_{\delta}$ are \textit{anisotropic tori}. If $K$ is an algebraically closed field, there is naturally no non trivial anisotropic torus.

We put : 
$$ U=\left\{\left(\begin{array}{cc}1 & u \\0 & 1\end{array}\right)\mid u\in K\right\}\bigcup\left\{\left(\begin{array}{cc}-1 & u \\0 & -1\end{array}\right)\mid u\in K\right\} \mbox{ and } U^+=\left\{\left(\begin{array}{cc}1 & u \\0 & 1\end{array}\right)\mid u\in K\right\}$$
If $A\in SL_2(K)$ satisfies $tr(A)^2-4=0$, then either $tr(A)=2$ or $tr(A)=-2$, and $A$ is a conjugate of an element of $U$. In this case, $A$ is said \textit{unipotent}.  It follows, from Proposition \ref{tr} : 

\begin{cor}
We have the following partition : 
$$SL_2(K)\backslash \{I,-I\} = (U\backslash \{I,-I\}) ^{SL_2(K)}\sqcup (Q_1\backslash \{I,-I\})^{SL_2(K)} \sqcup \bigsqcup_{\delta \in K^{\times}/(K^{\times})^2}\bigcup_{i=1}^{n}(Q_{\delta}^{\mu_i} \backslash \{I,-I\})^{SL_2(K)} \label{union}$$
\end{cor}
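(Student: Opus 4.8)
The plan is to read the partition directly off the map $A\mapsto tr(A)^2-4$, using Proposition \ref{tr} together with the remarks on unipotent elements that immediately precede the statement. Concretely, I would establish three identities — one for the conjugacy class of $U\backslash\{I,-I\}$, one for that of $Q_1\backslash\{I,-I\}$, and one for each $\bigcup_i(Q_{\delta}^{\mu_i}\backslash\{I,-I\})^{SL_2(K)}$ — each describing the corresponding set of conjugates as the preimage of a single class of $K^{\times}$ modulo squares (or of $\{0\}$), and then assemble them using the partition of $K$ into square classes.

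First the unipotent piece. If $A\in SL_2(K)$ satisfies $tr(A)^2-4=0$ and $A\neq\pm I$, its characteristic polynomial is $(X\mp1)^2$, so $A\mp I$ is a rank-one nilpotent matrix whose image equals its kernel; taking $v$ a generator of that kernel and $w\notin\ker(A\mp I)$, rescaling $v$ so that $(v\mid w)$ has determinant $1$ conjugates $A$ by an element of $SL_2(K)$ into $U^+\subseteq U$. (The only delicate point is precisely this rescaling, which is why the statement retains the whole of $U$ rather than a single matrix.) Conversely, a conjugate of an element of $U\backslash\{I,-I\}$ has trace $\pm2$ and is non-central, so
\[
(U\backslash\{I,-I\})^{SL_2(K)} = \{A\in SL_2(K) : tr(A)^2-4=0\}\backslash\{I,-I\}.
\]
For the toral pieces, note $I$ and $-I$ are central, hence fixed under conjugation, and lie in $Q_1$ and in every $Q_{\delta}^{\mu_i}$, whereas a conjugate of a non-central element stays non-central; since moreover $tr(\pm I)^2-4=0$ lies in no class $\delta(K^{\times})^2$, Proposition \ref{tr}(1) yields $(Q_1\backslash\{I,-I\})^{SL_2(K)}=\{A:tr(A)^2-4\in(K^{\times})^2\}$ and Proposition \ref{tr}(2) yields $\bigcup_{i=1}^{n}(Q_{\delta}^{\mu_i}\backslash\{I,-I\})^{SL_2(K)}=\{A:tr(A)^2-4\in\delta(K^{\times})^2\}$ for each $\delta$.

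Finally I would invoke the partition $K=\{0\}\sqcup(K^{\times})^2\sqcup\bigsqcup_{\delta}\delta(K^{\times})^2$, where $\delta$ runs over a set of representatives of the non-trivial classes of $K^{\times}/(K^{\times})^2$, finite by hypothesis on $K$. Pulling this back along $A\mapsto tr(A)^2-4$ partitions $SL_2(K)$ into the three corresponding pieces; substituting the three identities above and removing $\{I,-I\}$, which occurs only inside the first piece, gives exactly the asserted decomposition, the disjointness being inherited from the disjointness of the square classes. The hard part is really only the Jordan-form normalisation in the unipotent case (ensuring the conjugating matrix can be taken in $SL_2(K)$, not merely $GL_2(K)$) and the routine bookkeeping around the central elements $\pm I$; the rest is a direct translation of Proposition \ref{tr}.
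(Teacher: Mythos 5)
Your proposal is correct and follows exactly the route the paper intends: the corollary is stated as an immediate consequence of Proposition \ref{tr} together with the preceding observation that $tr(A)^2-4=0$ forces $A$ to be conjugate to an element of $U$, and the partition is read off from the decomposition of $K$ into $\{0\}$ and the square classes of $K^{\times}$. You simply supply the details the paper leaves implicit (in particular the rescaling that puts the conjugating matrix in $SL_2(K)$ for the unipotent case, and the bookkeeping around $\pm I$), which is a faithful elaboration rather than a different argument.
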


\begin{rmk}
If $\delta$ and $\delta'$ in $K^{\times}$ are in the same coset of $(K^{\times})^2$, then, by Proposition \ref{tr}, if $x'\in Q_{\delta'}^{\mu'}$ with $\mu'\in GL_2(K)$, then there exist $x\in Q_{\delta}$, $\mu\in GL_2(K)$ and $g\in SL_2(K)$, such that $x'=x^{\mu\cdot g}$, thus, by lemma \ref{centre}, $Q_{\delta'}=C_{SL_2(K)}(x')=C_{SL_2(K)}(x)^{\mu\cdot g}=Q_{\delta}^{\mu \cdot g}$.
Therefore in Corollary \ref{union} to speak of $Q_{\delta}$ up to conjugacy for $\delta \in \slfrac{K^{\times}}{(K^{\times})^2}$ makes sense.
\end{rmk}

\begin{proof}[Proof of Proposition \ref{tr}]
$\bullet$ If $A\in Q_1^{SL_2(K)}$, then there exists $P\in SL_2(K)$ such that 
$$A=P \left(\begin{array}{cc}a & 0 \\0 & a^{-1}\end{array}\right)P^{-1}$$
with $a\in K^{\times}$. We have $tr(A)=a+a^{-1}$, so $tr(A)^2-4=(a+a^{-1})^2-4=(a-a^{-1})^2$ and $tr(A)^2-4\in (K^{\times})^2$.

Conversely , let $A$ be in $SL_2(K)$ with $tr(A)^2-4$ a square. The characteristic polynomial is $\chi_A(X)=X^2-tr(A)X+1$ and its discriminant is $\Delta=tr(A)^2-4\in (K^{\times})^2$, so $\chi_A$ has two distinct roots in $K$ and $A$ is diagonalizable in $GL_2(K)$. There is $P\in GL_2(K)$, and  $D\in SL_2(K)$ diagonal such that $A=PDP^{-1}$. If
$$P=\left(\begin{array}{cc}\alpha & \beta \\\gamma & \delta \end{array}\right)$$ 
we put 
$$\tilde{P}=\left(\begin{array}{cc}\frac{\alpha}{det(P)} & \beta \\\frac{\gamma}{det(P)} & \delta\end{array}\right)$$ 
and we have $\tilde{P}\in SL_2(K)$ and $A=\tilde{P}D\tilde{P}^{-1} \in Q_1^{SL_2(K)}$.

$\bullet$
If $A$ is in $Q_{\delta}^{\mu\cdot SL_2(K)}\backslash \{I,-I\}$ with $\mu \in GL_2(K)$, then $tr(A)=2a$ and there exists $b\neq 0$ such that $a^2-b^2\delta=1$. 
So $tr(A)^2-4 = 4a^2-4=4(b^2\delta +1)-4=(2b)^2\delta \in \delta\cdot(K^{\times})^2$

Conversely we proceed as in the real case and the root $i\in\mathbb{C}$. The discriminant of $\chi_A$, $\Delta=tr(A)^2-4$ is a square in $K(\sqrt{\delta})$, and the characteristic polynomial $\chi _A$ has two roots in $K(\sqrt{\delta})$ : $\lambda_1 = a + b \sqrt{\delta}$ and $\lambda_2=a -b \sqrt{\delta}$ (with $a, b \in K$). For the two eigenvalues $\lambda_1$ and $\lambda_2$, A has eigenvectors : 
$$v_1=\left(\begin{array}{c}x+y\sqrt{\delta} \\x'+y'\sqrt{\delta}\end{array}\right) \quad \mbox{ and } \quad v_2=\left(\begin{array}{c}x-y\sqrt{\delta} \\x'-y'\sqrt{\delta}\end{array}\right)$$
 
In the basis $\left\{\left(x, x'\right),\left(y, y'\right)\right\}$, the matrix A can be written : 
$$\left(\begin{array}{cc}a & b \\b\delta & a\end{array}\right)$$ 
We can conclude that there exists $P\in GL_2(K)$ such that :
$$A=P\left(\begin{array}{cc}a & b \\b\delta & a\end{array}\right)P^{-1}$$
We proved that $Q_{\delta}^{GL_2(K)}= \left\{ A\in SL_2(K)\mid tr(A)^2-4 \in \delta\cdot(K^{\times})^2 \right\} \bigcup \left\{I,-I\right\}$.

Let us now study the conjugation in $GL_2(K)$ and in $SL_2(K)$. For the demonstration, we note : $S=SL_2(K)$, $G=GL_2(K)$ and $Ext(S)=\{f\in Aut(S) \mid f(M)=M^P\mbox{ for } M\in S, P\in G\}$, $Int(S)=\{f\in Aut(S) \mid f(M)=M^P \mbox{ for }M\in S, P\in S\}$.
Let $P,P'\in G$ and $M\in S$ then : 
$$M^P=M^{P'} \Leftrightarrow P^{-1}MP=P'^{-1}MP' \Leftrightarrow P'P^{-1}M=MP'P^{-1} \Leftrightarrow PP'\in C_G(M)$$
So $P$ and $P'$ define the same automorphism if and only if  $P'P^{-1}\in C_G(S)=Z(G)=K\cdot I_2$, then $Ext(S)\cong GL_2(K)/Z(G) \cong PGL_2(K)$, and similarly $Int(S)\cong SL_2(K)/Z(S) \cong PSL_2(K)$. 
It is known that $PGL_2(K)/PSL_2(K) \cong K^{\times}/(K^{\times})^2$.
Finally $Int(S)$ is a normal subgroup of finite index in $Ext(S)$, and there exist $\mu_1, ..., \mu_n\in GL_2(K)$ such that :
$$Q_{\delta}^{GL_2(K)}= Q_{\delta}^{\mu_1\cdot SL_2(K)} \cup ... \cup Q_{\delta}^{\mu_n\cdot SL_2(K)}$$
\end{proof}

\begin{thm}\label{Cartan}
The subgroups $Q_1$, $Q_{\delta}$ (for $\delta \in K^{\times}\backslash (K^{\times})^2$) and the externally conjugate $Q_{\delta}^{\mu_i}$ (for $\mu_1, ..., \mu_n\in GL_2(K)$) are the only Cartan subgroups of $SL_2(K)$ up to conjugacy.	
\end{thm}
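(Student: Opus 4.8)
The plan is to show that any Cartan subgroup $C$ of $SL_2(K)$ must contain a non-central element, and then exploit the centralizer computations of Lemma \ref{centre} together with the partition in Corollary \ref{union} to pin down $C$. First I would observe that $C$ is infinite (by the Remark following the definition of Cartan subgroup, since $SL_2(K)$ is infinite), so in particular $C$ contains an element $x \notin \{I,-I\}$. By Corollary \ref{union}, $x$ is conjugate to an element of $U$, of $Q_1$, or of some $Q_\delta^{\mu_i}$; equivalently, after replacing $C$ by a conjugate (which does not affect the statement, which is "up to conjugacy") we may assume $x$ lies in $U$, $Q_1$, or $Q_\delta$ itself. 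I would treat these three cases.

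In the split-torus and anisotropic-torus cases, the argument is short: if $x \in Q_1 \setminus \{I,-I\}$ lies in the nilpotent group $C$, then $C$ is contained in $C_{SL_2(K)}(x) = Q_1$ only if $C$ is abelian — more carefully, $C$ centralizes $x$ is not automatic, so instead I would argue that since $C$ is nilpotent and contains $x$, its centre is nontrivial and one can find a non-central element of $C$ centralizing $x$; better still, since any maximal nilpotent subgroup containing $Q_1$ equals $Q_1$ (shown in the proof that $Q_1$ is Cartan, via the normalizer condition: $N_C(Q_1) = Q_1\langle\omega\rangle$ is not nilpotent), and $C$ being maximal nilpotent and containing $x \in Q_1$ forces $C \subseteq C_{SL_2(K)}(x')$ for a suitable $x'$... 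Let me instead use the cleaner route: a maximal nilpotent subgroup $C$ containing an element $x$ of $Q_1 \setminus \{I,-I\}$ satisfies $C \le N_{SL_2(K)}(C_{SL_2(K)}(x)) = N_{SL_2(K)}(Q_1) = Q_1 \langle \omega \rangle$ (using Lemma \ref{centre} and the normalizer computation). Since $C$ is nilpotent and $Q_1\langle\omega\rangle$ is not, and $Q_1$ is the unique maximal nilpotent subgroup of $Q_1\langle\omega\rangle$ containing $Q_1^{2}$ (by the descending central series computation already given), maximality yields $C = Q_1$. The same argument with $Q_\delta$ in place of $Q_1$ handles the anisotropic case, using that $N_{SL_2(K)}(Q_\delta) = Q_\delta$.

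The unipotent case is where the real content lies, and I expect it to be the main obstacle: I must show that no Cartan subgroup consists of unipotent elements, i.e.\ that a maximal nilpotent subgroup $C$ containing a nontrivial element of $U$ fails condition (2) of the definition. The group $U^+$ is nilpotent (abelian, in fact), and $U = U^+ \cup (-I)U^+$ is also abelian and normalized by $Q_1$; one checks $N_{SL_2(K)}(U^+) = B$, the upper triangular Borel, which is solvable but, crucially, the relevant subgroups of $U^+$ are too normal: for any finite-index (hence, since $U^+ \cong K^+$ is torsion-free and divisible-ish, actually only $U^+$ itself has finite index when $K$ has characteristic $0$, but in general) subgroup $X \le U^+$, the normalizer $N_{SL_2(K)}(X)$ contains all of $Q_1$ acting by scaling, so $X$ has infinite index in its normalizer. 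More precisely, if $C$ is maximal nilpotent containing $U^+$, then $C \le N_{SL_2(K)}(U^+) = B$; but then $X = U^+$ is a subgroup of $C$ of index dividing $2$ in $C \cap U$-related pieces, and $N_{SL_2(K)}(U^+) = B \supsetneq C$ has $U^+$ of infinite index, violating (2) — so $C$ cannot be Cartan. One has to be careful to reduce from "$x$ unipotent" to "$U^+ \le C$": since $C$ is nilpotent and $x \in U \setminus \{I,-I\}$, $C \subseteq C_{SL_2(K)}(x)$, and a direct computation gives $C_{SL_2(K)}(x) = U$ (the full centralizer of a nontrivial unipotent), whence $C \subseteq U$, and by maximality $C = U$; then taking $X = U^+$, which has index $2$ in $C = U$, one finds $N_{SL_2(K)}(U^+) \supseteq U^+ \langle \mathrm{diag}(a,a^{-1})\rangle$ is infinite over $U^+$, contradicting (2). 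Assembling the three cases gives the theorem.
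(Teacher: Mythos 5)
Your overall strategy (reduce via Corollary \ref{union} to the three cases and exploit Lemma \ref{centre}) is the paper's, but at the decisive step of each case you pass from ``$C$ contains one element $x$ of $U$ (resp.\ of $Q_1$, $Q_\delta$)'' to a containment of \emph{all} of $C$, and these passages are not justified. In the unipotent case you claim that $C$ nilpotent with $x\in C\cap U\setminus\{I,-I\}$ gives $C\subseteq C_{SL_2(K)}(x)$: this is not a valid inference, since a nilpotent group containing $x$ need not centralize $x$ (that every nilpotent subgroup of $SL_2(K)$ is abelian is a \emph{corollary} of the theorem, not an available hypothesis). In the torus case you assert $C\le N_{SL_2(K)}(C_{SL_2(K)}(x))$; but for $g\in C$ one only has $C_{SL_2(K)}(x)^g=C_{SL_2(K)}(x^g)$, and $x^g$, while conjugate to $x$, has no reason to lie in $Q_1$, so this containment is essentially what has to be proved. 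The missing tool is the normalizer condition applied inside $C$ to the \emph{intersection} with the frame subgroup: writing $S=SL_2(K)$, any $\alpha\in N_C(C\cap B)$ sends $u$ to an element of $C\cap B$ of trace $\pm 2$, hence of $U$, so $U^\alpha=C_S(u^\alpha)=C_S(u)=U$ and $\alpha\in N_S(U)=B$; thus $N_C(C\cap B)=C\cap B$, and the normalizer condition forces $C\le B$. Even then, getting from $C\le B$ down to $C\le U$ needs a further argument (the paper intersects $C$ with the normal subgroup $C\cap U^+$ and uses that a nontrivial normal subgroup of a nilpotent group meets its centre). The analogous argument on $C\cap Q$ gives $N_C(C\cap Q)\le N_S(Q)$ and settles the torus cases.

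Two further points. In the split case, even once $C\le N_S(Q_1)=Q_1\cdot\langle\omega\rangle$ is secured, you must still exclude the possibility that $C$ meets the coset $Q_1\omega$; the paper does this by computing the iterated commutators $[t,\omega,\dots,\omega]=t^{2^n}$ and concluding that $C$ would then be finite. Your appeal to ``$Q_1$ is the unique maximal nilpotent subgroup of $Q_1\langle\omega\rangle$ containing $Q_1^2$'' presupposes $Q_1^2\le C$, which is not established ($C$ is only known to contain the single element $x$). By contrast, your concluding step in the unipotent case --- once $C=U$ is granted, $U$ violates condition (2) of the definition because it has infinite index in its normalizer $B$ --- is correct and is exactly the paper's.
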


\begin{proof}
It is clear that the image of a Cartan subgroup by an automorphism is also a Cartan subgroup. 
For the demonstration we note $S=SL_2(K)$ and $B$ the following subgroup of $SL_2(K)$ :
$$B=\left\{ \left( \begin{array}{cc}t & u \\0 & t^{-1}\end{array}\right)  \mid t\in K^{\times}, u\in K\right\}$$
With these notations, we can easily check for $g\in U\backslash\{I,-I\}$ that $C_S(g)=U$ and $N_S(U)=B$. Moreover it is clear that every $q\in B$ can be written as $q=tu$ where $t\in Q_1$ and $u\in U$.

Consider $C$ a Cartan subgroup of $SL_2(K)$. We will show that $C$ is a conjugate of $Q_1$ or of one of the $Q_{\delta}^{\mu}$ (for $\delta \in K^{\times}\backslash(K^{\times})^2$ and $\mu\in GL_2(K)$). First we prove $C$ cannot contain a unipotent element other than $I$ or $-I$. Since a conjugate of a Cartan subgroup is still a Cartan subgroup, it suffices to show that $C\cap U=\{I,-I\}$.

In order to find a contradiction, let $u\in C$ be a element of $U$ different from $I$ or $-I$, $u$ is in $C\cap B$. If $\alpha \in N_S(C\cap B)$, then we have that $u^{\alpha}\in C\cap B$, and since $tr(u^{\alpha})=tr(u)=±2$, $u^{\alpha}$ is still in $U$. Therefore $U=C_S(u)=C_S(u^{\alpha})=C_S(u)^{\alpha}=U^{\alpha}$ and so $\alpha$ is in $N_S(U)=B$. 
It follows $N_S(C\cap B) \leq B$ and finally $N_C(C\cap B)=C\cap B$.
By the normalizer condition $C\cap B$ cannot be proper in $C$, then $C\leq B$.

It is known (see for example \cite[Chapter 4, Theorem 2.9]{Suzuki}) that if $C$ is a nilpotent group and $H\unlhd C$ a non trivial normal subgroup, then $H\cap Z(C)$ is not trivial. If we assume that $C\nleq U^+$, since $C\leq B=N_S(U^+)$, $C\cap U^+$ is normal in $C$, and so $C\cap U^+$ contains a non trivial element $x$ of the center $Z(C)$. For $q\in C\backslash U^+$, there are $t\in Q_1\backslash \{I\}$ and $u\in U$ such that $q=tu$. We have $[x,q]=I$ so $[x,t]=I$, so $t=-I$ because $C_S(x)=U$. Therefore $C\leq U$.
Since $C$ is maximal nilpotent and $U$ abelian, $C=U$. But $U$ is not a Cartan subgroup, because it is of infinite index in its normalizer $B$. A contradiction.

Since $C$ does not contain a unipotent element, $C$ intersects a conjugate of $Q_1$ or of one of the $Q_{\delta}^{\mu}$ (for $\delta \in K^{\times}\backslash (K^{\times})^2$ and $\mu\in GL_2(K)$) by Corollary \ref{union}, we note $Q$ this subgroup. Let us show that $C=Q$.
Let be $x$ in $C\cap Q$, and $\alpha\in N_C(C\cap Q)$, then $x^{\alpha}\in Q$, and, by lemma \ref{centre}, $Q=C_S(x^{\alpha})=C_S(x)^{\alpha}=Q^{\alpha}$. Thus $\alpha \in N_S(Q)$, and $N_C(C\cap Q)\leq N_S(Q)$.

\begin{description}
 	 \item[1rst case] $Q$ is a conjugate of $Q_1$, then $N_S(Q)=Q\cdot<\omega'>$ where $\omega'=\omega^g$ if $Q=Q_1^g$. We have also $\omega'^2\in Q$ and $t^{\omega'}=t^{-1}$ for $t\in Q$. %First we can easily see that the cartan subgroup $K$ must be infinite. Otherwise the subgroup of $K$ generated by one element of $Q$ will be of infinite index in his normalizer $Q$ in $SL_2(\Qp)$.
One can check that $N_S(Q\cdot<\omega'>)=Q\cdot<\omega'>$, if $\omega' \in C$ then $N_C(Q\cdot<\omega'>\cap C)=Q\cdot<\omega'>\cap C$, by normalizer condition $C\leq Q\cdot<\omega'>$. If we note $n$ the nilpotency class of $C$, and $t\in C\cap Q$ then $[t, \omega', \omega', ... , \omega']=t^{2^n}=1$, so $t $ is an ${2^n}^{th}$ root of unity, so $C\cap Q$ and $C=(C\cap Q)\cdot <\omega>$ are finite, a contradiction. So $\omega'\nin C$. Then $N_C(Q\cap C)\leq Q\cap C$, it follows by normalizer condition that $C\leq Q$, and by maximality of $C$, $C=Q$.
	  \item[2nd case] $Q$ is a conjuguate of $Q_{\delta}$ (for $\delta \in K^{\times}\backslash (K^{\times})^2$), then $N_S(Q)=Q$. It follows similarly that $C=Q$.
  \end{description}
\end{proof}

 %%%
%\subsection{Description of "maximal" subgroups of $SL_2(K)$}
\begin{cor}\label{nilpotent}
 Let $H$ be a infinite nilpotent subgroup of $SL_2(K)$, then $H$ is a subgroup of a conjugate of either $U$ or $Q_1$ or $Q_{\delta}$ (for some $\delta$ in $\slfrac{K^{\times}}{(K^{\times})^2}$).
\end{cor}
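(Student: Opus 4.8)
The plan is to split according to whether $H$ contains a unipotent element other than $\pm I$, and in each case to produce a central element of $H$ whose centralizer in $SL_2(K)$ is the required frame subgroup.

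\emph{Unipotent case.} Suppose $u\in H$ is unipotent with $u\ne\pm I$. Conjugating $H$, we may assume $u\in U$, so $u$ lies in $H\cap B$, where $B$ is the Borel subgroup from the proof of Theorem~\ref{Cartan}. Arguing exactly as there, for $\alpha\in N_{SL_2(K)}(H\cap B)$ we get $u^{\alpha}\in H\cap B$ with $tr(u^{\alpha})=\pm 2$, hence $u^{\alpha}\in U\setminus\{\pm I\}$, hence $U=C_{SL_2(K)}(u^{\alpha})=C_{SL_2(K)}(u)^{\alpha}=U^{\alpha}$ and $\alpha\in N_{SL_2(K)}(U)=B$. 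Thus $N_H(H\cap B)=H\cap B$, and the normalizer condition for the nilpotent group $H$ gives $H\le B$. Since $B$ normalizes $U^{+}$, the subgroup $H\cap U^{+}$ is normal in $H$ and nontrivial (it contains $u^{2}\ne I$ because $\mathrm{char}\,K\ne 2$), so it meets $Z(H)$ nontrivially; picking $z\in(H\cap U^{+})\cap Z(H)$ with $z\ne I$ we have $z\in U\setminus\{\pm I\}$ and therefore $H\le C_{SL_2(K)}(z)=U$, a conjugate of $U$.

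\emph{Semisimple case.} Otherwise every element of $H\setminus\{\pm I\}$ is semisimple by Corollary~\ref{union}. Since $H$ is infinite and nilpotent, $Z(H)\ne\{I\}$. If $Z(H)$ contains some $z\ne\pm I$, then $z$ is semisimple, so by Lemma~\ref{centre} and the Remark following Corollary~\ref{union} the centralizer $Q=C_{SL_2(K)}(z)$ is a conjugate of $Q_1$ or of some $Q_{\delta}^{\mu_i}$; as $z\in Z(H)$ we obtain $H\le Q$, which is the desired conclusion. It remains to exclude $Z(H)=\{I,-I\}$. Suppose it holds; since $H$ is infinite, $H\ne Z(H)$, so the nilpotent group $H/Z(H)$ has nontrivial center, giving some $z\in H\setminus Z(H)$ with $[z,H]\subseteq Z(H)=\{I,-I\}$. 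Then $z$ is semisimple and $hzh^{-1}=\pm z$ for all $h\in H$, so every $h$ normalizes $Q=C_{SL_2(K)}(z)$; thus $H\le N_{SL_2(K)}(Q)$ while $H\cap Q=C_H(z)$. If $H\le Q$ then $H$ is abelian, contradicting $Z(H)=\{I,-I\}$; so $H\cap Q$ is proper in $H$, and in particular some $\beta\in H\setminus Q$ satisfies $\beta z\beta^{-1}=-z$. When $Q$ is a conjugate of some $Q_{\delta}^{\mu_i}$ this is impossible, because there $N_{SL_2(K)}(Q)=Q$. When $Q$ is a conjugate of $Q_1$, we have $N_{SL_2(K)}(Q)=Q\cdot\langle\omega'\rangle$ with $\omega'$ inverting $Q$, so $[H:H\cap Q]=2$, $\beta$ inverts $H\cap Q$, and the lower central series $\gamma_i(H)$ satisfies $\gamma_i(H)\supseteq(H\cap Q)^{2^{i}}$; nilpotency of $H$ then forces $H\cap Q$ to have finite exponent, hence to be finite since $Q$ is isomorphic to $K^{\times}$, whence $H$ is finite --- a contradiction.

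The auxiliary facts used --- $C_{SL_2(K)}(g)=U$ and $N_{SL_2(K)}(U)=B$ for $g\in U\setminus\{\pm I\}$, the various normalizer computations, the normalizer condition for nilpotent groups, and the fact that a nontrivial normal subgroup of a nilpotent group meets the center --- are either recorded in the proof of Theorem~\ref{Cartan} or entirely standard. The main obstacle is the exclusion of the case $Z(H)=\{I,-I\}$ in the semisimple case: this is the one place where nilpotency has to be used quantitatively, through the same exponent-bounding mechanism that shows the $\Gamma_i$ never reach $\{I\}$ in the proof of Theorem~\ref{Cartan}, and keeping track of the coset representative $\beta$ and the resulting bound on $H\cap Q$ is the delicate part.
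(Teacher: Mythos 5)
Your proof is correct and takes essentially the same route as the paper, whose own proof simply points back to the demonstration of Theorem~\ref{Cartan}: your unipotent case reproduces that argument verbatim (the normalizer condition forces $H\le B$, then a central element of $H\cap U^{+}$ forces $H\le U$), and your semisimple case rests on exactly the same two ingredients, namely $N_{SL_2(K)}(Q_{\delta}^{\mu})=Q_{\delta}^{\mu}$ and the fact that elements of $N_{SL_2(K)}(Q_1)\setminus Q_1$ invert $Q_1$, which together with nilpotency bounds the exponent of $H\cap Q$ by $2^{n}$ and forces finiteness. The only difference is organizational and harmless: where the paper applies the normalizer condition to $C\cap Q$, you extract an element central modulo $Z(H)$ and argue through its centralizer, which amounts to the same computation.
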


\begin{proof}
The previous demonstration shows that if $H$ is nilpotent and intersects up to conjugacy $U$, $Q_1$ or $Q_{\delta}$, then $H\leq U$, $H\leq Q_1$ or $H\leq Q_{\delta}$ respectively. 
We conclude by the Corollary \ref{union}.
\end{proof}

\begin{rmk}
In particular, every nilpotent subgroup of $SL_2(K)$ is commutatif.
\end{rmk}

For $X$ a set of $K^n$, we denote $\overline{X}^{K}$ the Zariski closure of $X$ in $K^n$, (that means the intersection of all algebraic sets in $K^n$ containing $X$), and $\overline{X}^{\widetilde{K}^{alg}}$ the Zariski closure of $X$ in $\widetilde{K}^{alg} \ ^n$.
We know by \cite[Chap. II, Sect. 5, Théorème 3]{chevalley} that if $H$ is a subgroup of $GL_n(K)$ then $\overline{H}^K$ and $\overline{H}^{\widetilde{K}^{alg}}$ are algebraic groups and $\overline{H}^K=\overline{H}^{\widetilde{K}^{alg}}\cap GL_n(K)$. 
For $Y$ an algebraic group over a field $K$, we denote $Y^{\circ}$ the algebraic connected component of $Y$, this is the intersection of all algebraic subgroups of finite index of $Y$. By \cite[1.2]{borel} it is the smallest algebraic subgroup of finite index of $Y$.

\begin{prop}\label{solvable}
Let $H$ be a a maximal solvable subgroup of $SL_2(K)$.

Then $H$ is the normalizer of a Cartan subgroup or $H$ is a conjugate of the group $B$.
\end{prop}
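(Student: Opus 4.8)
The plan is to classify maximal solvable subgroups $H$ of $SL_2(K)$ by first locating a Cartan subgroup (or a unipotent subgroup) inside $H$, then analyzing the normalizer. First I would invoke the partition from Corollary \ref{union}: every element of $H \setminus \{I,-I\}$ is unipotent, split-semisimple, or anisotropic-semisimple. I would split into two cases according to whether $H$ contains a non-central unipotent element.

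\textbf{Case 1: $H$ contains no non-central unipotent element.} Then by Corollary \ref{union} the set $H \setminus \{I,-I\}$ meets a conjugate $Q$ of $Q_1$ or of some $Q_\delta^{\mu}$. Pick $x \in H \cap Q$, $x \neq \pm I$; by Lemma \ref{centre} we have $C_{SL_2(K)}(x) = Q$, so $C_H(x) = H \cap Q$ is a maximal abelian — hence Cartan — subgroup of $SL_2(K)$ contained in $H$, and in fact $H \cap Q = Q$ once we check $Q \le H$ (since $Q$ is abelian, $H$ solvable and maximal, and $\langle H, Q\rangle$ is still solvable as $Q$ normalizes... this needs the argument that $Q \cup H$ generates a solvable group, which follows because any element of $H$ commuting with $x$ lies in $Q$). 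Then $H \le N_{SL_2(K)}(Q)$: indeed for $h \in H$, $x^h \in H$ is again non-central semisimple lying in the same conjugacy-type, and $C_{SL_2(K)}(x^h) = Q^h$ must equal $C_{SL_2(K)}(x) = Q$ by the Cartan-intersection argument used in Theorem \ref{Cartan} (using that $H$ is solvable so the derived series forces $x^h$ and $x$ into a common abelian normal piece — or more directly, that $H\cap Q$ is normalized by $h$). Hence $H \le N_{SL_2(K)}(Q)$. By the computations in the proof of the Proposition establishing $Q_1, Q_\delta$ are Cartan, $N_{SL_2(K)}(Q)$ is itself solvable (it is $Q\cdot\langle\omega'\rangle$, metabelian, in the split case, and equals $Q$ in the anisotropic case), so maximality of $H$ gives $H = N_{SL_2(K)}(Q)$, the normalizer of a Cartan subgroup.

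\textbf{Case 2: $H$ contains a non-central unipotent element $u$.} Up to conjugacy $u \in U$. Arguing as in the proof of Theorem \ref{Cartan}: for $h \in N_H(H \cap B)$ (where $B$ is the Borel as in that proof), $u^h \in H \cap B$ has trace $\pm 2$, hence is unipotent, so $C_{SL_2(K)}(u^h) = U = C_{SL_2(K)}(u)^h = U^h$, forcing $h \in N_{SL_2(K)}(U) = B$. Thus $N_H(H \cap B) \le B$, so $N_H(H\cap B) = H \cap B$, and by the normalizer condition for... wait, $H$ is only solvable, not nilpotent, so I cannot directly use the normalizer condition. Instead I would argue: $H \cap B$ is a proper-or-not subgroup; if $H \not\le B$ then $H \cap B \lneq H$, but since $\langle H, B\rangle$ is solvable (both solvable, $B$ Borel) and $H$ maximal solvable, and $u \in H$ forces $H$ into $N_{SL_2(K)}(U) = B$ by the same centralizer argument applied to any $h \in H$ (not just $h \in N_H(H\cap B)$): for arbitrary $h \in H$, $u^h \in H$ is unipotent non-central, and one checks $u^h \in U$ actually needs $u^h$ to have the right form, which holds because $H$ solvable implies the subgroup generated by $u$ and $u^h$ is solvable, both unipotent, hence conjugate into $U^+\cup(-U^+)$ simultaneously only if... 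Here is the cleaner route: since $C_{SL_2(K)}(u) = U$ and $u \in Z(U)$-ish, consider that $u^h$ unipotent implies $u^h \in$ some conjugate of $U$; solvability of $\langle u, u^h\rangle \le H$ plus both being unipotent of the same "sign" forces them to share a maximal unipotent, and tracking normalizers gives $U^h = U$, i.e. $h \in B$. Hence $H \le B$, and maximality gives $H = B$.

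\textbf{Main obstacle.} The delicate point is that $H$ is only \emph{solvable}, so the normalizer condition for nilpotent groups used throughout Theorem \ref{Cartan} is unavailable; I expect the bulk of the work to be in Case 2, showing that a non-central unipotent element of $H$ drags all of $H$ into a single Borel $B$ — equivalently, ruling out that $H$ contains two non-central unipotents lying in different (non-opposite) unipotent subgroups, which would generate a non-solvable subgroup. The resolution is to exploit that any two unipotent elements generating a solvable subgroup of $SL_2(K)$ must have proportional fixed points (otherwise they generate something containing a split torus and then all of $SL_2$), forcing them into a common $U^{g}$; combined with the centralizer bookkeeping $C_{SL_2(K)}(u) = U$ and $N_{SL_2(K)}(U) = B$ from the proof of Theorem \ref{Cartan}, this pins $H$ inside $B$. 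In Case 1 the analogous (easier) obstacle is confirming $N_{SL_2(K)}(Q)$ is solvable and that $H$ cannot escape it, both of which are immediate from the structure computations already carried out for $Q_1$ and $Q_\delta$.
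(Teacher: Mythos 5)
Your overall strategy (split on whether $H$ contains a non-central unipotent, then pin $H$ inside $B$ or inside the normalizer of a torus by centralizer bookkeeping) is reasonable, but the two steps on which everything hinges are asserted rather than proved, and they are precisely the non-trivial content of the proposition. In Case 2 you need the lemma that two non-central unipotent elements with distinct fixed lines generate a non-solvable subgroup of $SL_2(K)$; you state it ("otherwise they generate something containing a split torus and then all of $SL_2$") but give no argument, and the natural proof is exactly the tool you are trying to avoid: pass to the Zariski closure, note that the closure of $\langle u\rangle$ for $u$ a non-trivial unipotent is a full one-parameter root group, so $\overline{\langle u, u^h\rangle}$ contains two opposite root groups and hence equals $SL_2(\widetilde{K}^{alg})$. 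In Case 1 the step "$C_{SL_2(K)}(x^h)=Q^h$ must equal $Q$" is circular: it holds only if $h\in N_{SL_2(K)}(Q)$, which is what you are trying to prove, and it is genuinely false for an arbitrary semisimple $x\in H$ (in $H=N_{SL_2(K)}(Q_1)$ the element $\omega$ is semisimple and lies in a torus that $H$ does not normalize, so the conclusion depends on choosing the right $Q$). To repair this you must first produce a non-central normal abelian subgroup of $H$ consisting of semisimple elements (for instance the last non-central term of the derived series), show it lies in a single torus $Q$, and only then run the centralizer argument; your appeal to "the derived series forces $x^h$ and $x$ into a common abelian normal piece" gestures at this but does not carry it out, and the subsidiary claims ("$H\cap Q$ is a maximal abelian hence Cartan subgroup", "$\langle H,Q\rangle$ is still solvable") are either unjustified or conflate maximal abelian in $H$ with Cartan in $SL_2(K)$.

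The paper takes a different and shorter route that delivers both missing facts at once: it passes to the algebraic connected component $\overline{H}^{\circ}$ of the Zariski closure, invokes the Borel/Lie--Kolchin theorem to conjugate $\overline{H}^{\circ}$ into the Borel subgroup of $SL_2(\widetilde{K}^{alg})$, and then distinguishes whether $\overline{H}^{\circ}$ has non-trivial unipotents (giving that $H$ normalizes $H_u$, hence is a conjugate of $B$) from the case $H\not\leq\overline{B}$, where $H\cap\overline{H}^{\circ}$ sits in the intersection of two distinct Borels, which is a torus, so $H$ normalizes a Cartan subgroup. Your plan could likely be completed along the lines sketched above, but as written the "main obstacle" paragraph restates the obstacles rather than resolving them, so the proof is incomplete at its two pivotal points.
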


\begin{proof}
Let us note $\overline{H}^{\circ}$ the algebraic connected component of the  Zariski closure of $H$ in $SL_2(\widetilde{K}^{alg})$. By \cite[2.4]{borel} $\overline{H}^{\circ}$ is a solvable connected subgroup of $SL_2(\widetilde{K}^{alg})$, so $\overline{H}^{\circ}$ is a conjugate of a subgroup of $\overline{B}$ \cite[Theorem 11.1]{borel}, with 
$$B'=\left\{\left(\begin{array}{cc}a & b \\0 & a^{-1}\end{array}\right) \in SL_2(\widetilde{K}^{alg})\mid a\in \widetilde{K}^{alg \times} \mbox{ and } b\in \widetilde{K}^{alg} \right\}$$

If $H\leq \overline{H}^{\circ}$, by Corollary \ref{union} there are 2 possibilities: 
either $\overline{H}^{\circ}$ contains only semi-simple elements or it contains non trivial unipotents elements. In the former case $\overline{H}^{\circ}$ is a torus, so $H$ is also a torus in $SL_2(K)$, i.e. $H$ is a Cartan subgroup (by maximality).
In the latter case, we note $(\overline{H}^{\circ})_u$ the set of all unipotents elements of $\overline{H}^{\circ}$. By \cite[Theorem 10.6]{borel}, $(\overline{H}^{\circ})_u$ is a subgroup and $\overline{H}^{\circ}$ normalizes $(\overline{H}^{\circ})_u$. We can observe that $H_u=(\overline{H}^{\circ})_u\cap H$, so $H$ normalizes $H_u$ and $H$ is a conjugate of $B$.

If not, $H\cap \overline{H}^{\circ}$ is normal subgroup of finite index in $H$. We know that $\overline{H}^{\circ}$ is conjugate to a subgroup of $\overline{B}$, i.e. $\overline{H}^{\circ}\leq \overline{B}^g$ where $g\in SL_2(\widetilde{K}^{alg})$. 
We may assume that $g=I$ and we suppose that $H\nleq \overline{B}$, then there is $h\in H$ such that $\overline{B}^{h}\neq \overline{B}$. 
Therefore $H\cap \overline{H}^{\circ}\leq \overline{B}^{h}\cap \overline{B}$ but $\overline{B}^{h}\cap \overline{B}$ is a torus. It follows that $H$ normalizes the torus $H\cap \overline{H}^{\circ}$, i.e. by maximality, $H$ is the normalizer of a Cartan subgroup. 
\end{proof}

\begin{fact}\label{subgroupsB}
From \cite[Theorem 10.6]{borel}, one can deduce that definable subgroups of $B$ are as follow : 
$$\left\{\left(\begin{array}{cc}a & b \\0 & a^{-1}\end{array}\right)\in SL_2(K) \mid a\in P \mbox{ and } b\in Z\right\}$$
where $P$ is a definable subgroup of $K^{\times}$ and $Z$ is a definable subgroup of $K^+$ such that $P\cdot Z\subseteq Z$.
\end{fact}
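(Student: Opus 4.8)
Recall that $B$ is the semidirect product $U^{+}\rtimes Q_1$: every element is uniquely
\[
g(a,u)\ :=\ \left(\begin{array}{cc} a & u \\ 0 & a^{-1}\end{array}\right),\qquad a\in K^{\times},\ u\in K,
\]
with $g(a_1,u_1)\,g(a_2,u_2)=g\big(a_1a_2,\ a_1u_2+a_2^{-1}u_1\big)$, and $g(a,0)$ acts on $U^{+}\cong K^{+}$ by $u\mapsto a^{2}u$; this is the concrete form of the structure statement \cite[Theorem 10.6]{borel} for the connected solvable group $B$. Given a definable subgroup $H\le B$, set $Z:=\{u\in K:g(1,u)\in H\}$, a definable subgroup of $K^{+}$ with $H\cap U^{+}=\{g(1,u):u\in Z\}$, and set $P:=\pi(H)\le K^{\times}$, the image of $H$ under the homomorphism $\pi:B\to Q_1\cong K^{\times}$, $g(a,u)\mapsto a$, a definable subgroup of $K^{\times}$. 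The plan is to show that $P\cdot Z\subseteq Z$ and that, after conjugating $H$ by a suitable element of $U^{+}$ (which alters neither $Z$ nor $P$), $H=\{g(a,u):a\in P,\ u\in Z\}$; the converse — that every such set is a definable subgroup — is an immediate computation with the product formula.

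For $P\cdot Z\subseteq Z$: since $U^{+}\trianglelefteq B$ we have $Z\trianglelefteq H$, and conjugating $g(1,u)$ by an element of $H$ above $a\in P$ yields $g(1,a^{2}u)$, so $a^{2}Z=Z$ for all $a\in P$. To pass from $a^{2}$ to $a$ I use the classification of definable subgroups of $K^{+}$ for $K$ $p$-adically closed: besides $\{0\}$ and $K$ they are the balls $\{x:v(x)\ge\gamma\}$, whose stabiliser under multiplication is the unit group $\OO^{\times}=\{a:v(a)=0\}$; since $v(a^{2})=2v(a)$, one has $a^{2}\in\OO^{\times}\iff a\in\OO^{\times}$, so in the nontrivial case $P\subseteq\OO^{\times}$ and $P\cdot Z=Z$, while the cases $Z=\{0\}$, $Z=K$ are trivial.

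Now fix $a\in P$ and consider $H_{a}:=\{u:g(a,u)\in H\}\ne\varnothing$. From $g(a,u)\,g(a,u')^{-1}=g\big(1,a(u-u')\big)$ we get $H_{a}-H_{a}\subseteq a^{-1}Z=Z$, and from $g(a,u)\,g(1,z)=g(a,az+u)$ we get $u+aZ=u+Z\subseteq H_{a}$; hence $H_{a}=\beta(a)+Z$ is a single coset of $Z$ for any representative $\beta(a)\in H_{a}$. Comparing fibres in the product formula forces $\beta(a_1a_2)\equiv a_1\beta(a_2)+a_2^{-1}\beta(a_1)\pmod Z$, i.e.\ (after the substitution $a\mapsto a^{-1}\beta(a)$) $\beta$ determines a class in $H^{1}\big(P,\ K^{+}/Z\big)$ for the $P$-action on $K^{+}/Z$ induced by multiplication. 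Conjugating $H$ by $g(1,\mu)\in U^{+}$ sends $g(a,u)$ to $g\big(a,u+(a-a^{-1})\mu\big)$, hence replaces $\beta(a)$ by $\beta(a)+(a-a^{-1})\mu$ and alters this class by the coboundary of $\mu$. So everything reduces to the vanishing of that cohomology class: once it vanishes one may take $\beta\equiv 0$, i.e.\ $g(a,0)\in H$ for all $a\in P$, whence $H=\{g(a,u):a\in P,\ u\in Z\}$ exactly.

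That vanishing is the step I expect to require real work, and it is where the statement should honestly be read ``up to conjugacy by $U^{+}$'' (a $U^{+}$-conjugate of $Q_1$ is itself a definable subgroup of $B$ that is not literally of the displayed form). When $P$ is finite it is immediate: $K^{+}/Z$ is a divisible abelian group, so $H^{1}(P,K^{+}/Z)$ is simultaneously annihilated by and divisible by $|P|$, hence trivial. For infinite $P$ one must use the $p$-adic structure — either reduce to the finite case by controlling $P$ via the classification of definable subgroups of $K^{\times}$ (so that the relevant action of $P$ on $K^{+}/Z$ factors through a small quotient and the surviving continuous cohomology vanishes), or argue directly from Corollary \ref{union}: every non-central element of $H$ is diagonalisable over $\widetilde{K}^{alg}$, so the semisimple part of $H$ lies in a maximal split torus of $SL_2(K)$ contained in $B$, which in turn is a $U^{+}$-conjugate of $Q_1$, pinning $\beta$ down to the form $(a-a^{-1})\mu$. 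Controlling this unipotent ``twist'' of $H$ is the crux; the rest is bookkeeping inside the semidirect product together with the descriptions of the definable subgroups of $K^{+}$ and of $K^{\times}$.
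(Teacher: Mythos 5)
The paper does not actually prove this Fact --- it is asserted with a bare citation of Borel's structure theorem for connected solvable groups --- so there is no written argument of the author's to measure yours against. Your framework is the right one: the identification $B=U^+\rtimes Q_1$, the extraction of $Z=H\cap U^+$ and of $P$ as the image of $H$ in $Q_1\cong K^{\times}$, the computation that conjugation gives $a^2Z=Z$, and the upgrade to $aZ=Z$ via the classification of definable subgroups of $K^+$ are all correct; and your observation that the statement can only be read up to conjugacy by $U^{+}$ (because of $Q_1^{g(1,\mu)}$) is a genuine and necessary correction to the paper as written.

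The gap is the one you flag yourself, and it is real: the splitting step is not proved, and the one concrete argument you offer for it --- that $H^{1}(P,K^{+}/Z)$ vanishes for finite $P$ because $K^{+}/Z$ is divisible --- is false. Divisibility of the coefficient module does not force $H^{1}$ to vanish ($H^{1}(\Z/2\Z,\mathbb{Q}/\Z)\neq 0$); what kills $H^{1}(P,M)$ for finite $P$ is that multiplication by $|P|$ be \emph{bijective} on $M$, and on $K^{+}/Z\cong K/a_{\gamma}\OO$ this holds exactly when $p\nmid|P|$. For $p$ odd this rescues the finite case (the torsion of $K^{\times}$ has order dividing $p-1$), and for infinite $P$ one can finish by noting that $\dim H=2$ makes $H$ open in $B$, so your cocycle $\beta$ vanishes on a finite-index subgroup of $P$ and the coboundaries $(a^{2}-1)\mu$ then sweep out everything needed. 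But at $p=2$ the obstruction is genuinely nonzero: over $\mathbb{Q}_2$ the set $H=\{g(1,z):z\in\mathbb{Z}_2\}\cup\{g(-1,\frac{1}{2}+z):z\in\mathbb{Z}_2\}$ is a definable subgroup of $U\leq B$ containing $g(1,\mathbb{Z}_2)$ but not $-I=g(-1,0)$; it is untouched by conjugation by $g(1,\mu)$ (the coboundary at $a=-1$ is $((-1)-(-1)^{-1})\mu=0$) and no conjugate of it lying in $B$ has the displayed product form. So your cohomological reading, pushed to completion, would not merely fill the gap --- it shows the Fact as stated needs both the qualifier ``up to conjugacy'' and a correction at $p=2$; as submitted, though, the decisive vanishing step is missing and the one argument sketched for it does not work.
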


% Definissabilité
%\subsection{Remark on definissability}

Corollary \ref{nilpotent} and Proposition \ref{solvable} are true for any subgroup of $SL_2(K)$, independent from any  assumption on definability or algebraicity.
$Q_1$, $Q_{\delta}$, $U$ and $B$ appear like "frame" subgroups which contain, up to conjugacy, every nilpotent or solvable subgroups of $SL_2(K)$.
These "frame" subgroups are definable in the pure language of groups. This means that we consider $SL_2(K)$ as a first order structure and these subgroups are definable in the language $\mathcal{L}_G=\{\cdot, \ ^{-1}, e\}$ :
every nilpotent subgroup $Q_1$, $Q_{\delta}$ or $U$ is the centralizer of one of its non central element and $B=N_S(C_S(u))$ where $u\in U\backslash\{I,-I\}$.
Obviously, they are also definable in the structure $K$ with the ring language $\mathcal{L}_{R}=\{+,-,\cdot ,0,1\}$. These descriptions are independent from the choice of field. Nevertheless the structures of these groups are very different depending on field. In order to understand the finer structure of these groups, we need to further investigate the model-theoretical nature of the field and answer the question : What are the subgroups of $Q_1$, $Q_{\delta}$, $U$ or $B$, definable in the field $K$ ? 

We finish this section with related remark that every group definable in the field language $\mathcal{L}_R$ is interpretable in the pure group and moreover to be interpretable in the field language is equivalent to be interpretable in the pure group:

\begin{prop}
Let $K$ be an infinite field of characteristic different from $2$. Then the field $(K,+,-,\cdot,0,1)$ is interpretable in the pure group $(SL_2(K),\cdot)$.
\end{prop}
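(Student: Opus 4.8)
The plan is to recover the field $(K,+,-,\cdot,0,1)$ inside $(SL_2(K),\cdot)$ by first recovering a definable copy of the Borel $B$ and of its unipotent radical $U^+$, then identifying $K^+$ with a conjugacy orbit inside $U^+$ and $K^\times$ with the torus $Q_1$ acting on it, and finally recognising addition and multiplication from the group operations. Concretely, I would first note that in the pure group $(SL_2(K),\cdot)$ one can define, without parameters, the set of involutions and more usefully the set of unipotent elements: by Proposition~\ref{tr} and the partition in Corollary~\ref{union}, an element $g\neq \pm I$ is unipotent iff $C_S(g)$ is abelian, self-centralising, and $g$ has a square root of every order inside $C_S(g)$ (equivalently $C_S(g)\cong K^+$, which is uniquely divisible by odd primes and $2$-divisible since $\mathrm{char}\,K\neq 2$), whereas centralisers of semisimple non-central elements are tori $Q_1$ or $Q_\delta$. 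Having pinned down one unipotent $u$, the group $U=C_S(u)$ is definable, $B=N_S(U)$ is definable (as noted already in the paper), and $U^+$ is the unique maximal subgroup of $U$ not containing $-I$; moreover $B=Q_1\ltimes U^+$ up to the central $\pm I$, and $Q_1$ is recovered as (the connected part of) a complement, i.e. as a maximal torus inside $B$.

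The second step is the arithmetic. Fix the parameter $u=\left(\begin{smallmatrix}1&1\\0&1\end{smallmatrix}\right)$, so that $U^+=\{u(x):x\in K\}$ with $u(x)=\left(\begin{smallmatrix}1&x\\0&1\end{smallmatrix}\right)$, and $u(x)u(y)=u(x+y)$ — this already gives $(K^+,+)$ as the definable group $U^+$ with its group law. For multiplication, let $h(a)=\left(\begin{smallmatrix}a&0\\0&a^{-1}\end{smallmatrix}\right)\in Q_1$; then $h(a)u(x)h(a)^{-1}=u(a^2 x)$, so the conjugation action of $Q_1$ on $U^+$ realises the squaring action of $K^\times$ on $K^+$. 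Since $\mathrm{char}\,K\neq 2$, the map $a\mapsto a^2$ is $2$-to-$1$ onto $(K^\times)^2$, and one recovers the full multiplicative structure by working on the quotient $K^\times/\{\pm1\}\cong (K^\times)^2$ via $a\mapsto a^2$: define a binary operation $\ast$ on $U^+\setminus\{I\}$ by declaring $u(x)\ast u(y)=u(z)$ iff there is $g\in Q_1$ with $g\,u(x)\,g^{-1}=u(z)$ and $g^2$-image of $u(1)$ equals $u(y)$; equivalently use the standard Bass–Serre / Chevalley-commutator trick: with $u^-(y)=\left(\begin{smallmatrix}1&0\\y&1\end{smallmatrix}\right)$ living in the opposite unipotent $U^-$ (which is also definable, e.g. as $C_S(\omega u \omega^{-1})$ for the Weyl element $\omega\in N_S(Q_1)$), the identity $u(x)u^-(-x^{-1})u(x)=\omega h(x)$ for $x\neq0$ lets one \emph{define} the inverse map $x\mapsto x^{-1}$ on $U^+\setminus\{I\}$ purely group-theoretically, and then $x\cdot y$ is recovered from $x^{-1}$, $y^{-1}$, addition, and the parallelogram identity $xy=((x^{-1}+y^{-1})^{-1})$ restricted to the locus where it makes sense, patched with the characteristic-$\neq2$ polarisation. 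After transporting everything across the definable bijection $K\to U^+$, $x\mapsto u(x)$, we obtain definable $+$, $-$, $\cdot$, $0=I$, $1=u$ on a definable set, hence an interpretation of the field.

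The main obstacle I expect is the multiplication step: conjugation by the torus only gives the action of $(K^\times)^2$, not of $K^\times$, so one cannot naively read off $\cdot$ as "the action of a matrix unit on $U^+$". The clean way around this is the Chevalley-commutator identity for $SL_2$ together with the Weyl element, which expresses each $h(x)$ and hence the inversion $x\mapsto x^{-1}$ directly in terms of products of unipotent elements from $U^+$ and $U^-$; since $U^+$, $U^-$ and the product $\omega h(x)$ all live in the definable subgroups already constructed, this yields a genuine $\mathcal L_G$-formula. A secondary technical point is handling the ubiquitous factor $\pm I$: $B$, $U$, $Q_1$ all contain $-I$, so at several places one must pass to $U^+$ (the torsion-free, $-I$-free part) or to quotients by $Z(SL_2(K))=\{\pm I\}$, which is harmless since $-I$ is the unique involution in the centre and is definable. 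Once the inversion map on $U^+\setminus\{I\}$ is in hand, recovering $\cdot$ and verifying the field axioms is a routine check that I would not carry out in detail.
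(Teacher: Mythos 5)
Your interpretation takes a genuinely different route from the paper's. The paper keeps the domain of the interpretation inside the torus: it takes $(Q_1\times Q_1)/\!\sim$ as the underlying set, uses the identity $x=\left(\frac{x+1}{2}\right)^2-\left(\frac{x-1}{2}\right)^2$ (this is where $\mathrm{char}\,K\neq2$ enters) to write every element of $K$ as a difference of two squares $t_0^2-t_1^2$, and then reads off both $+$ and $\cdot$ from products of conjugates $u_0^{t_0}u_1^{t_1}\cdots$, the bilinearity of multiplication doing all the work; the obstruction you correctly identify -- that conjugation by $Q_1$ only realises multiplication by squares -- is thus absorbed into the choice of domain rather than confronted. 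You instead take $U^+\cong K^+$ itself as the domain, which makes addition immediate, and attack multiplication through the Weyl-element relation. That is the more classical interpretation (essentially how one recovers a field from a Borel subgroup in the finite Morley rank literature), and it can be made to work -- but not quite as you have written it.

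The concrete gap is in the multiplication step. First, your auxiliary operation ``$u(x)\ast u(y)=u(z)$ iff there is $g\in Q_1$ with $gu(x)g^{-1}=u(z)$ and the $g$-image of $u(1)$ is $u(y)$'' only ever produces $y\in(K^\times)^2$, so it does not define multiplication on all of $K$ and is not equivalent to the commutator trick. Second, the formula you fall back on, $xy=((x^{-1}+y^{-1})^{-1})$, is false: $(x^{-1}+y^{-1})^{-1}=xy/(x+y)$. The repair is already contained in the identity you invoke: $u(x)\,u^-(-x^{-1})\,u(x)$ equals the antidiagonal element $h(x)\omega$ (with $h(x)$ the diagonal matrix with entries $x,x^{-1}$), so the map $u(x)\mapsto h(x)$ from $U^+\setminus\{I\}$ to $Q_1$ is definable with parameter $\omega$, and since $h(x)h(y)=h(xy)$ the multiplication of $K^\times$ is literally the group law of $Q_1$ transported along this bijection. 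Alternatively, once inversion $x\mapsto x^{-1}$ is definable, use $(x^{-1}-(x+1)^{-1})^{-1}=x^2+x$ to define squaring and then polarise via $2xy=(x+y)^2-x^2-y^2$, which is where $\mathrm{char}\,K\neq 2$ genuinely enters on your route. A last, minor point: your parameter-free characterisation of unipotents via divisibility of $C_S(g)\cong K^+$ fails in odd positive characteristic ($K^+$ is not $q$-divisible for $q=\mathrm{char}\,K$), but since the paper's convention allows parameters throughout, you may simply fix $u$ and $\omega$ as parameters and drop that first paragraph entirely.
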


\begin{proof}The subgroup $Q_1$ acts on $U$ :
$$\left(\begin{array}{cc}t & 0 \\0 & t^{-1}\end{array}\right)\left(\begin{array}{cc}1 & u \\0 & 1\end{array}\right)\left(\begin{array}{cc}t^{-1} & 0 \\0 & t\end{array}\right)=\left(\begin{array}{cc}1 & t^2 u \\0 & 1\end{array}\right)$$

For the demonstration, we identify the matrix $\left(\begin{array}{cc}t & 0 \\0 & t^{-1}\end{array}\right)\in Q_1$ and $t\in K^{\times}$ and we choose one element $u_0$ in $U$ and its inverse $u_1$.

We consider $A$ the set $Q_1\times Q_1$ quotiented by the equivalence relation : 
$$(t_0,t_1)\sim (t_0',t_1') \quad \mbox{ iff } \quad u_0^{t_0}\cdot u_1^{t_1}=u_0^{t_0'}\cdot u_1^{t_1'} \mbox{ in }SL_2(K) \quad (\mbox{ i.e. } \quad t_0^2-t_1^2=t_0'^2-t_1'^2 \mbox{ in } K)$$

We have to construction a bijection between $A$ and $K$ and to define the addition and the multiplication of the field in $A$.
First we remark that for every $x\in K$ $$x=\left(\frac{x+1}{2}\right)^2-\left(\frac{x-1}{2}\right)^2$$
Then, we see that the application $\varphi : A\longrightarrow K$, $(t_0,t_1)\mapsto t_0^2-t_1^2$ is a bijection. 

Now the addition and the mulplication of $K$ are given by : 
$$ (t_0,t_1)+(t_0',t_1')=(t_0'',t_1'') \quad \mbox{ iff } \quad u_0^{t_0}u_1^{t_1}{u_0}^{t_0'}u_1^{t_1'}=u_0^{t_0''}u_1^{t_1''}$$
$$ (t_0,t_1)\cdot(t_0',t_1')=(t_0'',t_1'') \quad \mbox{ iff } \quad u_0^{t_0t_0'}u_0^{t_1t_1'}{u_1}^{t_0t_1'}u_1^{t_1t_0'}=u_0^{t_0''}u_1^{t_1''}$$
\end{proof}

For $K$ an algebraically closed field, $Q_1\cong K^{\times}$ and $U\cong K^+$, and they have no proper infinite definable subgroups. If $K$ is a real closed field, $Q_1\cong K^{\times}$ so $\{1,-1\}$ and $K^{>0}$  are its only definable subgroups, and $U\cong K^{+}$ has no proper definable subgroup. Up to conjugacy, there is one Cartan subgroup which  contains non central diagonalizable element : $Q_{-1}=SO_2(K)$, and one can easily check that it has no proper infinite definable subgroup. The question for the $p$-adically closed field case is less trivial. 
We will analyze this issue in the next section by identifying the subgroups of $SL_2(K)$, with $K$ $p$-adically closed, definable in the field language.

% CAS P-ADIQUE
\section{The $p$-adically closed case}\label{padique}
During this section, unless otherwise noted,  $K$ will be a $p$-adically closed field. By $p$-adically closed we mean elemenetary equivalent to $\Qp$ $K$ (for some authors this is equivalent to be $p$-adically closed of $p$-rank 1). This means that $(K,v_p)$ is a henselian valued field of characteristic $0$ whose value group $\Gamma$ is a $\mathbb{Z}$-group (i.e. elementarily equivalent to the ordered additive group $(\mathbb{Z},+, <)$ ),  residue field $k$ is $\mathbb{F}_p$, and such that $v_p(p)$ is the smallest positive element of $\Gamma$. 

On the one hand, we can study $\Qp$ in the usual field language $\mathcal{L}_R=\{+,-,\cdot,0,1\}$. By quantifier elimination in the language $\mathcal{L}_R\cup \{P_n(x)\}_{n\geq 1}$ where $P_n(x)$ are predicates for $\exists y\ y\neq 0 \wedge x=y^n$ \cite{MacEQ}, it is kown that definable sets in $K$ are precisely semi-algebraic sets, which are finite boolean combinations of sets defined by $f(\bar{x})=0$ or $P_n(g(\bar{x}))$ with $f(\bar{X}), g(\bar{X})\in K[\bar{X}]$.
In the other hand, we can  consider $\Qp$ as a 3-sorted structure $(K, k, \Gamma)$ where $K$ is the field, $k$ is the residual field and $\Gamma$ the values group. For each field $K$ and $k$, we have symbols from the field language and $\Gamma$ is considered as an ordered group in the language $\mathcal{L}_{OG}=\{+,-,<,0,1\}$, we add two symbols for the valuation $v:K\rightarrow \Gamma\cup\{\infty\}$ and the residue map $res: K\rightarrow k$. 
Since the valuation ring $\mathcal{O}$ is definable in the $\mathcal{L}_R$, the two previous points of vue have the same expressive power (what is definable in the one is definable in the other). 

From the model-theorical viewpoint, $\Gamma$ is a $\mathbb{Z}$-group, it verifies the Presburger arithmetic \cite[p. 81]{Marker}. We know that $\Gamma$ admits quantifier elimination in the language $\mathcal{L}_{OG}\cup \{S_n(x)\}_{n\in \mathbb{N}}$ where $S_n(x)$ are predicates for $\exists y \ x=ny$\cite[Corollary 3.1.21]{Marker}, then every definable subset in $\Gamma$ is a finite union of intervals and of sets of the form $n\Gamma$. So we can conclude that definable subgroups of $\Gamma$ are of the form $n\Gamma$ for $n\in \mathbb{N}$ (an interval is not stable under addition).

We consider $\Qp^{an}$ the expansion of $\Qp$ by adjoining "all restricted analytic functions", that is functions $f:\Zp^m\longrightarrow \Qp$ given by power series $\sum_I a_I x^I$ in $\Zp[[\bar{x}]]$ such that $v(a_I)$ tends to infinity as $\mid I\mid \rightarrow \infty$. $\Qp^{an}$ is studied in the language $\mathcal{L}_{an}$ obtained from $\mathcal{L}_R$ by adjoining symbols for these new functions. 

Most statements in this section are true for more general fields than the $p$-adically closed one. We will try to formulate each statement in the largest possible context known to us. Indeed, except the propositions from part 2.2, everything works for finite extension of $p$-adically closed fields. Moreover the raisoning using dimension end topological properties assure us that statement stay true for $\Qp^{an}$. 

For this study, we use model theory of $p$-adically closed fields, especially a notion of dimension.

% dimension
\subsection{Definable subgroups and dimension}
We will use the notion  of dimension introduced  by van den Dries in \cite{dimvdD}. Axioms used in this definition seem to be the most general, they  imply in particular the notion used in \cite{cartan}. 
We wil recall principal facts about it and refer to the article \cite{dimvdD} for technical points. The aim of this section is to establish  a direct link between dimension and algebraic properties of definable subgroups (Proposition \ref{dimension}).

We work with a structure $M$ such that  each nonempty set definable in $M$ is equipped with a dimension in $\mathbb{N}\cup\{ -\infty\}$ satisfying the following axioms:

For any definable sets $S$, $S_1$ and $S_2$ :
\begin{enumerate}[label=(Dim \arabic*)]
	\item $\dim S=-\infty \Leftrightarrow S=\emptyset$,
	$\dim \{ a\}=0$ for each $a\in M$,
	$\dim M^1=1$.
	\item $\dim( S_1 \cup S_2)=\max (\dim S_1, \dim S_2)$.
	\item $\dim S^{\sigma}=\dim S$ for each permutation $\sigma$ of $\{1, ..., m\}$, where 
	$$S^{\sigma}=\{(x_{\sigma(1)}, ... , x_{\sigma(m)})\in M^m \mid (x_1, ... , x_m)\in S\}$$
	\item If $T\subset M^{m+1}$ is a definable set and $T_x=\{y\in M \mid (x,y)\in T\}$ for each $x\in M^m$, then $T(i)=\{x\in M^m \mid \dim T_x=i\}$ (for $i=0,1$) is definable and $$\dim \{(x,y)\in T \mid x\in T(i)\}=\dim T(i) +i$$
\end{enumerate}

These very general axioms imply \cite[1.1 and 1.5]{dimvdD} more precise and pratically useful properties: 
 
\begin{description}
	\item[Definability] If $f$ is definable function from $S_1$ to $S_2$ then the set $\{y\in S_2\mid \dim(f^{-1}(y))=m\}$ is definable  for every $m$ in $\mathbb{N}$.
	\item[Additivity] If $f$ is a definable function from $S_1$ to $S_2$, whose fibers have constant dimension $m$ in $\mathbb{N}$, then $\dim S_1=\dim Im(f) + m$. In particular $\dim (S_1\times S_2)=\dim S_1 +\dim S_2$
	\item[Finite sets] $S$ is finite iff $\dim S =0$.
	\item[Monotonicity] If $f$ is a definable function from $S$ to $A^m$ then $\dim f(S) \leq \dim S$, and if $f$ is injective $\dim f(S)=\dim S$.  In particular, if $S_1\subseteq S_2$ then $\dim S_1 \leq \dim S_2$.
\end{description}

Van den Dries showed \cite{dimvdD} that henselian fields of characteristic $0$ are equipped with such a notion of dimension. In the case of $\Qp$, it corresponds to the notion defined in \cite{Dries}.
If $(K,v)$ is a valued field, the valuation $v$ define a topology and we will note $\overline{X}^v$ the closure of the set $X\subseteq K^n$ for this topology.

\begin{lem}\label{lemdim}
Let $K$ be a henselian field of characteristic 0, and $X$ and $Y$  sets in $K^m$ definable in the field language. Then
\begin{itemize}
	\item	the dimension is compatible with the algebraic closure, i.e. : 
	$$\dim_K X = \dim_K \overline{X}^K=\dim_{\widetilde{K}^{alg}} \overline{X}^{\widetilde{K}^{alg}}$$
	\item the dimension is compatible with the topology, i.e.:
	$$ \mbox{ if } X\subseteq Y \mbox{ and } \dim X=\dim Y \quad \mbox{ then } \quad X \mbox{ has non empty interior in } Y$$
\end{itemize}

\end{lem}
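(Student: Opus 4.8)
The plan is to handle the two assertions by quite different means: the first is algebraic and comes out of algebraic boundedness, while the second is genuinely topological and rests on cell decomposition.

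For the compatibility with the algebraic closure I would start from the fact, due to van den Dries \cite{dimvdD}, that a henselian field of characteristic $0$ is algebraically bounded, so that the dimension pinned down by (Dim 1)--(Dim 4) agrees with the algebraic dimension of a definable set. In particular $\dim_K X=\dim_K\overline{X}^K$: one inequality is Monotonicity applied to $X\subseteq\overline{X}^K$, and for the other one writes $\overline{X}^K$ as the finite union of its irreducible components and observes that each component meets $X$ in a Zariski-dense, hence full-dimensional, definable subset, so the top-dimensional component has the same dimension as $X$. For the remaining equality the key point is that, because $X\subseteq K^n$, the vanishing ideal of $\overline{X}^{\widetilde{K}^{alg}}$ in $\widetilde{K}^{alg}[\bar x]$ is generated by the vanishing ideal of $\overline{X}^K$ in $K[\bar x]$: a polynomial with coefficients in a finite extension $L/K$ can be expanded along a $K$-basis of $L$, and it vanishes on $X$ iff each of its $K$-components does. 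Thus $\overline{X}^{\widetilde{K}^{alg}}$ is the base change of the $K$-variety $\overline{X}^K$ to $\widetilde{K}^{alg}$; Krull dimension is invariant under extension of the base field, and over an algebraically closed field the axiomatic dimension of a Zariski-closed set is its Krull dimension, which yields the whole chain of equalities.

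For the compatibility with the topology I would reduce everything to the single statement $(\star)$: \emph{a definable set $Z\subseteq K^n$ with $\dim Z=n$ has non-empty interior in $K^n$.} Granting $(\star)$, take $X\subseteq Y$ with $\dim X=\dim Y=d$ and apply $p$-adic cell decomposition to $Y$, arranged so that the union $Y_0$ of the $d$-dimensional cells is open in $Y$; since the remaining cells contribute total dimension $<d$, axiom (Dim 2) gives $\dim(X\cap Y_0)=d$, hence some $d$-dimensional cell $D$ satisfies $\dim(X\cap D)=d$. Now $D$ is definably homeomorphic, through a coordinate projection, to an open subset $V$ of $K^d$, so the image of $X\cap D$ is a definable subset of $V$ of dimension $d$ by Monotonicity, and $(\star)$ makes it have non-empty interior in $K^d$; pulling back, $X\cap D$ has non-empty interior in $D$, hence in $Y_0$, hence in $Y$. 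Statement $(\star)$ itself follows the same way: cell-decompose $Z$; a cell of dimension $n$ living inside $K^n$ is, after a coordinate permutation, an open subset of $K^n$, and such a cell must occur because the lower-dimensional cells have total dimension $<n$.

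The hard part is the topological half, and more precisely the combined use of cell decomposition and the fact that a top-dimensional cell is locally open — this is the genuine bridge between the abstract dimension axioms and the valuation topology. In the $p$-adically closed case this is Denef's cell decomposition theorem; for the broader henselian fields of characteristic $0$ allowed by the statement one has to appeal to the structure theory underlying van den Dries's treatment of dimension in \cite{dimvdD} and \cite{Dries}. By contrast the algebraic half is routine once algebraic boundedness is available, the only mildly delicate point being the base-change identity for vanishing ideals.
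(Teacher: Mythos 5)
Your treatment of the first bullet is sound and essentially equivalent to the paper's: the paper cites van den Dries for $\dim_K X=\dim_K\overline{X}^K$ and then passes to a $|K|^+$-saturated elementary extension $K'$, using the identities $\dim_K X=\max\{\mathrm{trdeg}_K K(x)\mid x\in X^*\}=\dim_{\widetilde{K'}^{alg}}\overline{X^*}^{\widetilde{K'}^{alg}}=\dim_{\widetilde{K}^{alg}}\overline{X}^{\widetilde{K}^{alg}}$; your explicit base-change argument via the vanishing ideal and invariance of Krull dimension is a legitimate substitute for the last step, and both routes ultimately rest on the same algebraic-boundedness input.

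The topological bullet is where there is a genuine gap. The lemma is stated for an arbitrary henselian field of characteristic $0$, and it is used in the paper in settings beyond $\Qp$ (finite extensions of $p$-adically closed fields, and, via the analogous property, $\Qp^{an}$). Your argument rests on Denef-style cell decomposition, which you only have for $p$-adically closed fields; for the general case you defer to ``the structure theory underlying van den Dries's treatment'' without naming a usable statement, so the claim is not actually proved in the stated generality. The paper avoids cells entirely: it uses the frontier inequality $\dim\bigl(\overline{S}^{v}\setminus S\bigr)<\dim S$ for definable $S$ (van den Dries, 2.23), applied to $S=Y\setminus X$. If $X$ had empty interior in $Y$, then $Y\setminus X$ would be dense in $Y$, so $X\subseteq\overline{Y\setminus X}^{v}\setminus(Y\setminus X)$ and hence $\dim X<\dim(Y\setminus X)\leq\dim Y$, a contradiction. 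This three-line argument is exactly the missing bridge and is valid for every field covered by the statement. A secondary, fixable issue in your cell argument: the union $Y_0$ of the $d$-dimensional cells need not be open in $Y$ (lower-dimensional cells need not be closed in $Y$), and an open subset of a top-dimensional cell $D$ need not be open in $Y$ at points where other cells accumulate on $D$; repairing this again requires the frontier inequality, so you may as well use it from the start.
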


In the first point the dimension $\dim _{\widetilde{K}^{alg}}\overline{X}^{\widetilde{K}^{alg}}$ is unterstood as the algebro-geometric dimension of the Zariski closure in $\widetilde{K}^{alg}$.

\begin{proof}
$\bullet $ We know by \cite[2.12]{dimvdD} that  $\dim_K X=\dim _K \overline{X}^K$.
Let $K'\succeq K$ a $|K|^+$-saturated structure and $X^*$ is the set of $K'^m$ defined by the same formula as $X$. We have by \cite[1.7]{dimvdD} and \cite[2.3]{dimvdD} : 
$$\dim_K X=\max \{ trdeg _K K(x) \mid x\in X^*\}=\dim_{\widetilde{K'}^{alg}}X^* =\dim _{\widetilde{K'}^{alg}} \overline{X^*}^{\widetilde{K'}^{alg}}=\dim _{\widetilde{K}^{alg}} \overline{X}^{\widetilde{K}^{alg}}$$
By \cite[2.1]{dimvdD}, we do not need $X^*$ to be definable in $\widetilde{K'}^{alg}$.

$\bullet$ Let $X\subseteq Y$ be definable sets such that $\dim X=\dim Y$ and in order to find a contradiction suppose that $X$ has empty interior in $Y$. That means that $X$ does not contain any open of $Y$, so for all $x\in X$ and $\gamma \in \Gamma$, $B_{\gamma}(x)\cap Y \nsubseteq X$ $i.e.$ there exists $y\in Y\backslash X$ such that $y\in B_{\gamma}(x)\cap Y$. This means that $Y\backslash X$ is dense in $Y$ for the valuation topology, so $\overline{Y\backslash X}^v=Y$. Using \cite[2.23]{dimvdD}, we find $\dim ((\overline{Y\backslash X}^v)\backslash(Y\backslash X))< \dim (Y\backslash X)$ and finally $\dim X<\dim Y$, a contradiction. 
\end{proof}

\begin{rmk}
We can easily check that we have  $\dim Q_1 =\dim Q_{\delta} = \dim U = 1$, and $\dim B=2$. Thus Corollary \ref{nilpotent} and Proposition \ref{solvable} show that : 
\begin{enumerate}
	\item If $H$ is a definable nilpotent subgroup of $SL_2(K)$ then $\dim H=1$.
	\item If $H$ is a definable solvable subgroup of $SL_2(K)$ then $\dim H\leq2$.
\end{enumerate}
The next proposition give us the converse. 
\end{rmk}

\begin{prop} \label{dimension}[$K$ $p$-adically closed field]
Let $H$ be an infinite definable subgroup of $SL_2(K)$.
\begin{enumerate}
	\item $\dim H=1$ iff $H$ is commutative or $H$ is a subgroup of a conjugate of $N_{SL_2(K)}(Q_1)$. 
	\item $\dim H= 2$ iff $H$ is a non nilpotent subgroup of a conjugate of $B$.
	\item $\dim H=3$ iff $H$ is not solvable.
\end{enumerate}
\end{prop}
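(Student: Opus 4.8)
The plan is to prove each equivalence by combining the structural results of Section~1 (Corollary~\ref{nilpotent}, Proposition~\ref{solvable}, Theorem~\ref{Cartan}) with the dimension count recorded in the preceding remark and the compatibility of dimension with Zariski closure and topology (Lemma~\ref{lemdim}). The reverse implications in each item are essentially already in the remark: if $H$ is commutative and infinite it lies in a conjugate of $U$, $Q_1$ or $Q_\delta$ by Corollary~\ref{nilpotent}, hence $\dim H\le 1$, and being infinite it is not finite so $\dim H=1$; if $H\le N_{SL_2(K)}(Q_1)=Q_1\cdot\langle\omega\rangle$ then $H$ has dimension at most $\dim Q_1=1$, and again $\dim H\ge 1$ since $H$ is infinite. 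Similarly a non-nilpotent subgroup of a conjugate of $B$ cannot have dimension $\le 1$ (dimension $1$ would force it to be contained, up to conjugacy, in $U$, $Q_1$ or $Q_\delta$ by the argument below, all of which are nilpotent), and it sits inside $B$ so $\dim H\le\dim B=2$; and if $H$ is not solvable it cannot be contained in any conjugate of $B$ or of a normalizer of a Cartan subgroup (those are solvable by Proposition~\ref{solvable}), and $\dim H\le\dim SL_2(K)=3$.

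So the real content is the forward directions, and the key case is $\dim H=1$. Here I would pass to the Zariski closure: let $\overline{H}$ be the Zariski closure of $H$ in $SL_2(\widetilde K^{alg})$ and $\overline H^\circ$ its connected component. By Lemma~\ref{lemdim}, $\dim\overline H=\dim H=1$, so $\overline H^\circ$ is a $1$-dimensional connected algebraic subgroup of $SL_2(\widetilde K^{alg})$; such a group is either a maximal torus or a conjugate of the unipotent radical $U^+(\widetilde K^{alg})$. Intersecting back with $SL_2(K)$ and using that $H$ has finite index in $\overline H\cap SL_2(K)$ (as $\overline H^\circ$ has finite index in $\overline H$ and $\dim(\overline H\cap SL_2(K))=1=\dim H$, so $H$ is open in it by the topological part of Lemma~\ref{lemdim}, hence of finite index since $\overline H\cap SL_2(K)$ is a definable group with a definable subgroup of the same dimension): in the torus case $H$ is a finite-index subgroup of a maximal torus of $SL_2(K)$, which is commutative; in the unipotent case $H$ normalizes $H_u=(\overline H^\circ)_u\cap H$ which is a finite-index subgroup of a conjugate of $U^+$, and then $H$ is contained in the normalizer of (a conjugate of) $U^+$, which is (a conjugate of) $B$. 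But $\dim H=1<2=\dim B$, so $H\ne B$; one then checks $H$ must actually be commutative — either $H\le$ a conjugate of $U$, or, if $H$ projects onto something in the torus part of $B$, one uses the centralizer computations ($C_S(u)=U$ for $u\in U\setminus\{\pm I\}$, and $t^\omega=t^{-1}$ for diagonal $t$) to force $H$ into a conjugate of $Q_1\cdot\langle\omega\rangle=N_{SL_2(K)}(Q_1)$, which is exactly the alternative allowed in item~(1). I would spell this last dichotomy out via Fact~\ref{subgroupsB}: a $1$-dimensional subgroup of $B$ has either trivial torus-part (landing in $U$, commutative) or finite unipotent part, and in the latter case it embeds into $Q_1\cdot\langle\omega\rangle$ after conjugation.

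For $\dim H=2$: again take $\overline H^\circ$, now $2$-dimensional connected solvable-or-not. If $\overline H^\circ=SL_2(\widetilde K^{alg})$ that is impossible by dimension, so $\overline H^\circ$ is a proper $2$-dimensional connected subgroup, hence conjugate to $\overline B$ (the Borel), and in particular solvable; therefore $H$ is solvable, and by Proposition~\ref{solvable} $H$ lies in a conjugate of $B$ or in a normalizer of a Cartan subgroup. The latter has dimension $1$ (for $Q_\delta$) or $1$ (for $N(Q_1)=Q_1\cdot\langle\omega\rangle$), too small; so $H$ is contained in a conjugate of $B$, and it is non-nilpotent because by Corollary~\ref{nilpotent} every nilpotent subgroup has dimension $\le 1$. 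For $\dim H=3$: $\dim\overline H=3$ forces $\overline H=SL_2(\widetilde K^{alg})$, which is not solvable; if $H$ were solvable then by Proposition~\ref{solvable} it would sit in a conjugate of $B$ or in a Cartan normalizer, both of dimension $\le 2$, contradicting $\dim H=3$. Hence $H$ is not solvable. (One can also invoke Theorem~\ref{unbounded} here, but the dimension argument is self-contained given only the earlier material.)

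The main obstacle I anticipate is the $\dim H=1$ case, specifically the bookkeeping needed to show that a $1$-dimensional definable subgroup which is not commutative must land in a conjugate of $N_{SL_2(K)}(Q_1)$ rather than in some other exotic configuration — this requires knowing precisely which $1$-dimensional algebraic subgroups of $SL_2$ exist (tori and unipotents only), that a definable group of the same dimension as its algebraic hull is of finite index in that hull (the open-subgroup argument from Lemma~\ref{lemdim} plus definability of dimension), and a careful case analysis inside $B$ via Fact~\ref{subgroupsB} and the centralizer/conjugation identities of Section~1; the torus-part-versus-unipotent-part split and the appearance of the extra element $\omega$ are where the argument is most delicate. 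Everything else reduces to the dimension table in the remark together with Proposition~\ref{solvable} and the algebraic classification of connected subgroups of $SL_2$.
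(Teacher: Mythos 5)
Your strategy is the same as the paper's: pass to the connected component $\overline{H}^{\circ}$ of the Zariski closure, transfer the dimension via Lemma \ref{lemdim}, use the classification of low-dimensional connected algebraic subgroups of $SL_2(\widetilde{K}^{alg})$ (where the paper quotes Humphreys for commutativity in dimension $1$ and Cherlin for solvability in dimension $2$, you use the explicit list of tori, unipotent groups and Borels, which amounts to the same thing), and then descend to $H$ through Proposition \ref{solvable} and Corollary \ref{nilpotent}. The reverse implications, item 3, and the forward direction of item 2 are fine.

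There is, however, a genuine gap in the unipotent branch of item 1. Your concluding dichotomy --- ``a $1$-dimensional subgroup of $B$ has either trivial torus-part (landing in $U$, commutative) or finite unipotent part'' --- is not a dichotomy. In the notation of Fact \ref{subgroupsB}, $\dim H=1$ only forces $\dim P+\dim Z=1$, and the case where $P$ is finite but nontrivial while $Z$ is infinite is omitted. That case is not vacuous: for $p\geq 5$ take $P=\{a\in K : a^{p-1}=1\}$ and $Z=p^{n}\mathcal{O}$, giving the definable one-dimensional group
$$H=\left\{\begin{pmatrix}a & b\\ 0 & a^{-1}\end{pmatrix}\in SL_2(K) : a^{p-1}=1,\ b\in p^{n}\mathcal{O}\right\}.$$
This $H$ is not commutative (conjugation by $\mathrm{diag}(a,a^{-1})$ multiplies the unipotent coordinate by $a^{2}$, and some $a\in P$ has $a^{2}\neq 1$), and it contains nontrivial unipotent elements, so it lies in no conjugate of $N_{SL_2(K)}(Q_1)$, all of whose elements are semisimple. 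Hence the step where you ``force $H$ into a conjugate of $Q_1\cdot\langle\omega\rangle$'' cannot be carried out; this $H$ in fact witnesses the failure of the stated biconditional in item 1 (and of the right-to-left direction of item 2, which you justified by appeal to the same dichotomy). You should be aware that the paper's own proof elides exactly the same case: it passes from ``$H$ normalizes a commutative subgroup of finite index'' directly to ``$H\leq N_{SL_2(K)}(T)$ with $T$ conjugate to $Q_1$'', which is unjustified when that commutative subgroup is unipotent rather than toral. So the defect is not one more careful bookkeeping on your part would repair; the statement itself needs to be adjusted (e.g.\ by allowing, as a further alternative in item 1, subgroups of a conjugate of $B$ whose image in the torus quotient is finite).
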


\begin{proof}
By proposition \ref{solvable}, it suffices to show the first two points : 
\begin{enumerate}
	\item $\overline{H}^{\circ}$ is of dimension 1 in $\widetilde{K}^{alg}$, so by \cite[20.1]{humphreys}, $\overline{H}^{\circ}$ is comutative. 
	\begin{itemize}
		\item if $H<\overline{H}^{\circ}$, $H$ is commutative ; 
		\item if not, $H\cap \overline{H}^{\circ}$is a normal subgroup of finite index in $H$, then $H$ normalize a commutative subgroup of finite index. So $H\leq N_{SL_2(K)}(T)$ where $T$ is conjugate to $Q_1$. 
	\end{itemize}
	\item $\overline{H}^{\circ}$ is of dimension 2 in $\widetilde{K}^{alg}$ (in particular it is connected group of Morley rank 2), by \cite[Theorem 6]{cherlin}, it is solvable. We know by the demonstration of proposition \ref{solvable}, that $H$ is a conjugate of a subgroup of $B$. 
\end{enumerate}
\end{proof}

\begin{rmk}\label{Qpan}
By \cite{vdDHasMac}, we know that $\Qp^{an}$ is equipped with such a notion of dimension. Moreover the dimension verifies the following propertie : $$ \mbox{ if } X\subseteq Y \mbox{ and } \dim X=\dim Y \quad \mbox{ then } \quad X \mbox{ has non empty interior in } Y$$
However dimension in $\Qp^{an}$ is not compatible with algebraic closure. 

Thus for $K$ a $\mathcal{L}_{an}$ elementary extension of $\Qp^{an}$ and $H$  an infinite definable subgroup of $SL_2(K)$, we have the following implications :
\begin{enumerate}
	\item $\dim H=1$ $\Leftarrow$ $H$ is commutative or $H$ is a subgroup of a conjugate of $N_{SL_2(K)}(Q_1)$. 
	\item $\dim H= 2$ $\Leftarrow$ $H$ is a non nilpotent subgroup of a conjugate of $B$.
	\item $\dim H=3$ $\Rightarrow$ $H$ is not solvable.
\end{enumerate}

\end{rmk}

% Commutative subgroups
\subsection{Commutative definable subgroups}

In this section we are interested in the description of definable commutative subgroups of $SL_2(K)$ where $K$ is a $p$-adically closed field. Definable means here definable in $\mathcal{L}_R$ or in $\mathcal{L}_{an}$. We already know that they are, up to conjugacy, subgroups of $U$, $Q_1$ or $Q_{\delta}$ (with $\delta\in K^{\times}\backslash (K^{\times})^2$).Thus, we have to describe the definable subgroups of $U$, $Q_1$ and $Q_{\delta}$. 

We know that $U_+\cong K^+$, thus definable subgroups of $U_+$ correspond to definable subgroups of $K^+$. A subgroup of $K^+$ is infinite so of dimension 1, thus it is an open subgroup of $K^+$.
Open subgroups of $\Qp^+$ are of the form $p^n\Zp$ for $n\in \mathbb{Z}$ \cite[Lemma 3.2]{pillayfields}, so they are the only definable subgroups of $\Qp^+$ (and of $(\Qp^{an})^+$). For $K$ a $p$-adically closed field, we have the same property : definable subgroups of $K^+$ are of the form $a_{\gamma}\mathcal{O}$ where $v_p(a_{\gamma})=\gamma\in \Gamma$.

For $Q_1\cong K^{\times}$, let us show the following result  : 

\begin{prop}
Let $K$ be a $p$-adically closed field and $H$ an infinite definable subgroup of $K^{\times}$.
\begin{enumerate}
	\item If $H$ is bounded, then there exists $\gamma_0\in \Gamma$ and $a_{\gamma_0}\in K$ with $v_p(a_{\gamma_0})=\gamma_0$ such that $H$ contains $1+a_{\gamma_0}\mathcal{O}$ as a subgroup of finite index at most $(p-1)$ for $p\neq 2$ and at most $2$, for $p=2$. 
	\item If $H$ is unbounded then there exist $\gamma_0\in \Gamma$, $n\in\mathbb{N}$ and $\{a_{\gamma}\}_{\gamma\in\Gamma} \subseteq K$ and $b_{\gamma_0}\in K$ with $v_p(a_{\gamma})=\gamma$ and $v_p(b_{\gamma_0})=\gamma_0$, such that $H$ contains $\{a_{\gamma} ; \gamma\in n\Gamma\}\cdot (1+b_{\gamma_0}\mathcal{O})$ as subgroup of finite index at most $(p-1)$  for $p\neq 2$ and at most $2$, for $p=2$. 
\end{enumerate}
\end{prop}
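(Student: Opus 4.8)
The plan is to analyze $H$ through the valuation map. We have the short exact sequence $1 \to \mathcal{O}^\times \to K^\times \xrightarrow{v_p} \Gamma \to 0$, and $\mathcal{O}^\times$ itself fits into $1 \to 1+p\mathcal{O} \to \mathcal{O}^\times \to k^\times \to 1$ with $k^\times$ cyclic of order $p-1$. Since $H$ is infinite and definable, it is of dimension $1$, hence open in $K^\times$ by Lemma~\ref{lemdim} (compatibility with topology), since every proper subgroup would otherwise have dimension $0$ and be finite. Being open means $H$ contains $1 + a_{\gamma_0}\mathcal{O}$ for some $\gamma_0 \in \Gamma$: indeed an open subgroup contains a basic neighbourhood of $1$, which we can shrink to a subgroup of the form $1 + a_{\gamma_0}\mathcal{O}$ (for $\gamma_0$ large enough this is a group, using that $(1+x)(1+y) = 1 + x + y + xy$ with $v_p(xy) \geq 2\gamma_0 > \gamma_0$ once $\gamma_0 > 0$, and similarly for inverses). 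This settles the "containment" part; it remains to bound the index and treat the two cases.

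For the bounded case, $H \subseteq \mathcal{O}^\times$ (a bounded subgroup of $K^\times$ sits inside the units). Consider the composite $H \hookrightarrow \mathcal{O}^\times \to k^\times$. Its kernel is $H \cap (1+p\mathcal{O})$, which is an open — hence finite-index — subgroup of $1+p\mathcal{O}$; but the principal units $1+p\mathcal{O}$ form a pro-$p$ group (for $p \neq 2$; for $p=2$ one uses $1+4\mathcal{O}$), in which — crucially — every open subgroup of the shape $1+a_\gamma\mathcal{O}$ has $p$-power index, while $H \cap (1+p\mathcal{O})$ has $(p-1)$-free index considerations forcing it to actually be all of $1 + a_{\gamma_0}\mathcal{O}$ for the appropriate $\gamma_0$ — more precisely, since $H$ is open and contains $1+a_{\gamma_0}\mathcal{O}$, and the quotient $(1+a_{\gamma_0}\mathcal{O})/(1+a_{\gamma_1}\mathcal{O})$ is a $p$-group for $\gamma_1 > \gamma_0$, one checks $H \supseteq 1+a_{\gamma_0}\mathcal{O}$ with $[H : 1+a_{\gamma_0}\mathcal{O}]$ dividing $|k^\times| = p-1$. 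The $p=2$ case needs the separate bound $2$ because $\mathbb{Z}_2^\times \cong \mathbb{Z}/2 \times \mathbb{Z}_2$ and the torsion part contributes the factor $2$; one takes $\gamma_0 = v_p(4)$ there.

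For the unbounded case, look at $v_p(H) \leq \Gamma$. This is a definable subgroup of the $\mathbb{Z}$-group $\Gamma$, hence equals $n\Gamma$ for some $n \in \mathbb{N}$ (as recalled in the text). Choose a section: for each $\gamma \in n\Gamma$ pick $a_\gamma \in H$ with $v_p(a_\gamma) = \gamma$, consistently so that $\{a_\gamma : \gamma \in n\Gamma\}$ is (up to the bounded part) a subgroup — this requires $a_{\gamma}a_{\gamma'}a_{\gamma+\gamma'}^{-1} \in H \cap \mathcal{O}^\times$, which we absorb into the bounded factor. Then $H \cap \mathcal{O}^\times$ is a bounded definable subgroup of $K^\times$, so part (1) applies: it contains $1 + b_{\gamma_0}\mathcal{O}$ of index at most $p-1$ (resp. $2$). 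Assembling, $H$ contains $\{a_\gamma : \gamma \in n\Gamma\} \cdot (1 + b_{\gamma_0}\mathcal{O})$, and the index computation reduces to that of $H \cap \mathcal{O}^\times$ inside the full preimage, which is the index from part (1).

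**Main obstacle.** The delicate point is the precise index bound and making the section $\gamma \mapsto a_\gamma$ genuinely multiplicative (not merely set-theoretic), since $\Gamma$ being a $\mathbb{Z}$-group means $n\Gamma$ need not split off a literal subgroup of $K^\times$ unless one is careful; the cleanest route is to phrase everything in terms of $H \cap \mathcal{O}^\times$ and the quotient $H/(H\cap\mathcal{O}^\times) \cong v_p(H) = n\Gamma$, deducing the structure from an extension of $n\Gamma$ by the bounded piece, and to verify the $p=2$ exceptional constant by an explicit look at $1+2\mathbb{Z}_2$ versus $1+4\mathbb{Z}_2$.
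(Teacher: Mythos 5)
Your overall strategy coincides with the paper's: use the dimension--topology lemma (Lemma \ref{lemdim}) to get openness of $H$, hence containment of some $1+a_{\gamma_0}\mathcal{O}$; control the index by the residue map to $k^{\times}$ (the paper does this equivalently via the torsion part of $H$, which has at most $p-1$ elements); and, in the unbounded case, pass to $v_p(H)=n\Gamma$ and apply the bounded case to $H\cap\mathcal{O}^{\times}$. There are, however, two genuine gaps. The pivotal step --- that $H\cap(1+p\mathcal{O})$ is \emph{exactly} of the form $1+a_{\gamma_0}\mathcal{O}$, rather than merely containing one --- is asserted through ``$p$-power index considerations'', which presuppose that an open subgroup of $1+p\mathcal{O}$ has finite index. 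That is true in $\Qp$ by compactness, but false in a general $p$-adically closed field: $1+a_{\gamma}\mathcal{O}$ for $\gamma$ a nonstandard positive element of $\Gamma$ is open and of infinite index in $1+p\mathcal{O}$. Even in $\Qp$ this step needs an actual argument; the paper reproduces Pillay's: take $n_0$ minimal with $1+p^{n_0}\Zp\subseteq H_0$, and if $x\in H_0\setminus(1+p^{n_0}\Zp)$, replace $x$ by a suitable $p$-th power to land in $(1+p^{n_0-1}\Zp)\setminus(1+p^{n_0}\Zp)$, whose powers exhaust the cosets of the $\mathbb{Z}/p\mathbb{Z}$ quotient, contradicting minimality. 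Your sketch can be completed along these lines in $\Qp$, but as written the ``forcing'' is not a proof.

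Second, and relatedly, nothing in your argument passes from $\Qp$ to an arbitrary $p$-adically closed $K$. The paper does this by encoding the conclusion as a single first-order sentence (existence of $b$ and of at most $p-1$ coset representatives for the subgroup defined by $\varphi(x,\bar a)$), verified in $\Qp$ and hence in every $K\equiv\Qp$. This transfer is indispensable precisely because the minimality and finite-index arguments above are unavailable when $\Gamma$ is a nonstandard $\mathbb{Z}$-group, so its omission is not a routine detail. On the positive side, you correctly isolate the issue of making the section $\gamma\mapsto a_{\gamma}$ compatible with multiplication in part (2) so that $\{a_{\gamma}\}\cdot(1+b_{\gamma_0}\mathcal{O})$ is genuinely a subgroup --- a point the paper itself passes over silently.
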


\begin{proof}
\begin{enumerate}
	\item We assume $p\neq 2$ and we first work  in $\Qp$. Since $H$ is bounded, $H\leq \Zp^{\times}$. Let us denote $H_0$ the torsion-free part of $H$, then $H_0\leq 1+p\Zp$. It is well-known that $1+p\Zp\cong (\Zp,+)$ and let us  following the reasoning of Pillay in $\Zp$ \cite[Lemma 3.2]{pillayfields}. Since $\dim H_0=1=\dim (1+p\Zp)$,  $H_0$ contains a open neighborhood of $1$. Thus there exists $n\in \mathbb{N}$ such that $1+p^n\Zp\subseteq H_0$. Let $n_0$ be the smallest such $n$. Let us show that $H_0=1+p^{n_0}\Zp$. In order to find a contradiction, let $x$ be in $H_0$ such that $x\nin 1+p^{n_0}\Zp$. It is  easy to remark that if $x\in 1+p^n\Zp$, then $x^p\in 1+p^{n+1}\Zp$, so by replacing $x$ by a suitable $p^{th}$ power of $x$, we can suppose that $x\in 1+p^{n_0-1}\Zp$. 
	As $\slfrac{(1+p^{n_0-1}\Zp)}{(1+p^{n_0}\Zp)}\cong \slfrac{\mathbb{Z}}{p\mathbb{Z}}$ and $\{x^i ; 0\leq i<p\}$ forms a complete set of cosets representatives of $1+p^{n_0}\Zp$ in $1+p^{n_0-1}\Zp$, so $1+p^{n_0-1}\Zp\subseteq H_0$, a contradiction and $H_0=1+p^{n_0}\Zp$. 
	The number of torsion  elements of $H$ is finite and at most $(p-1)$, so $H_0$ is of finite index at most $(p-1)$ in $H$. Then, as $1+p^n\Zp$ is definable, we shown that 
	\begin{align} \Qp\models \forall \bar{a} \ ("\varphi(x,\bar{a}) \mbox{ defines a subgroup of } \Qp^{\times}")  \rightarrow &  \exists b \ \exists x_1,..., x_{p-1} \bigwedge_{0\leq i\leq p-1} \varphi(x_i,\bar{a})  \nonumber\\
	& \wedge\forall y \  (\varphi(y,\bar{a}) \rightarrow \bigvee_{0\leq i\leq p-1} y\cdot x_i^{-1}\in 1+b\Zp)\nonumber
	\end{align}
	Then the property is true for every $p$-dically closed field, and it finish the proof.
	For $p=2$, the same proof works remplacing $1+p\Zp$ by $1+4\mathbb{Z}_2$. 
	\item We call $H_1=H\cap \mathcal{O}^{\times}$. We can easily check that two elements of $H$ are in the same coset of $H_1$ if and only if they have the same valuation. Moreover, as $\Gamma$ is a $\mathbb{Z}$-group, and $v_p(H)$ is a definable subgroup of $\Gamma$ then $v_p(H)$ is of the form $n\Gamma$ for some $n\in \mathbb{N}$. Then we can choose some $a_{\gamma}\in H$ such that $v_p(a_{\gamma})=\gamma$ and $\{a_{\gamma} ; \gamma \in n\Gamma\}$ forms a set of coset representatives of $H_1$ in $H$. We know by 1. that there exist $\gamma_0\in \Gamma$ and $b_{\gamma_0}\in K$ with $v_p(b_{\gamma_0})=\gamma_0$ such that $1+b_{\gamma_0}\mathcal{O}$ is of finite index at most $p-1$  in $H_1$ if $p\neq 2$ (and at most $2$ if $p=2$), so do $\{a_{\gamma};\gamma\in n \Gamma\}\cdot (1+b_{\gamma_0}\mathcal{O})$ in $H$.
\end{enumerate}
\end{proof}

%\begin{fact}
%If $K$ is a $p$-adically closed field and $H$ a infinite definable subgroup of $K^{\times}$. Then there exists $n\in \mathbb{N}$ such that $(K^{\times})^n$ or $(\mathcal{O}^{\times})^n$ is a subgroup of finite index in $H$. 
%\end{fact}

%\begin{proof}
%Let us reason in $\Qp$. If $H\leq \Zp^{\times}$, then it is compact and of finite index $n$ in $\Zp^{\times}$ hence $(\Zp^{\times})^n\leq H$, but $(\Zp^{\times})^n$ is open, so of finite index in $\Zp^{\times}$ and in $H$. If $H\nleq\Zp^{\times}$, let $m=\{n>0 \mid \exists x\in H\quad v_p(x)=n\}$, then $H\cdot \Zp^{\times}=\{x\in \Qp^{\times}\mid v_p(x)\in m\Z\}$ has finite index in $\Qp^{\times}$. As $H$ is open and $\Zp^{\times}$ is compact, by the second theorem of isomophism $H$ has finite index in $H\cdot\Zp^{\times}$ so in $\Qp^{\times}$, say index $n$. Again $(\Qp^{\times})^n\leq H$ and as $(\Qp^{\times})^n$ is of finite index in $\Qp^{\times}$ thus also in $H$.

%For $K$ a $p$-adically closed field, $H$ an infinite subgroup of $K^{\times}$ defined by $\varphi(x)$. Then $\varphi(\Qp)$ is a infinite definable subgroup of $\Qp^{\times}$, we know that $\varphi(\Qp)$ contains $(\Zp^{\times})^n$ or $(\Qp^{\times})^n$ as subgroup of finite index, what is expressible in a first order formula, and so true for $K$. 
%\end{proof}

The aim is now to study definable subgroups of $Q_{\delta}$. Let us remark first that: 

\begin{fact}[\cite{JPS}]\label{Serre}
Let $K$ be a $p$-adically closed field. 

If $p\neq2$, the group $K^{\times} / (K^{\times})^2$ is isomorphic to $\slfrac{\Z}{2\Z} \times \slfrac{\Z}{2\Z}$, it has for representatives $\{1, \alpha, p, \alpha p\}$, where $\alpha\in \mathcal{O}^{\times}$ is such that $res(\alpha)$ is not a square in $\mathbb{F}_p$

If $p=2$, the group $K^{\times}/(K^{\times})^2$ is isomorphic to $\slfrac{\Z}{2\Z} \times \slfrac{\Z}{2\Z} \times \slfrac{\Z}{2\Z}$, it has for representatives $\{\pm1,\pm 2, \pm 5, \pm10\}$. 
\end{fact}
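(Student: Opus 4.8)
This is a classical computation --- over $\Qp$ it is precisely the one in \cite{JPS} --- so in the paper I would simply cite it, but here is the plan one would follow to prove it directly. Fix $p$ as a uniformizer of $\OO$, so that $v_p(p)$ is the least positive element of $\Gamma$ and $p\OO$ is the maximal ideal. The valuation gives an exact sequence $1\to\OO^{\times}\to K^{\times}\to\Gamma\to 1$, and a unit that is a square in $K^{\times}$ is already a square in $\OO^{\times}$ (its square roots have valuation $0$), so this induces an exact sequence
\[
1\longrightarrow \OO^{\times}/(\OO^{\times})^{2}\longrightarrow K^{\times}/(K^{\times})^{2}\longrightarrow \Gamma/2\Gamma\longrightarrow 1 .
\]
Since $\Gamma$ is a $\mathbb{Z}$-group, $\Gamma/2\Gamma\cong\mathbb{Z}/2\mathbb{Z}$ with representatives $0$ and $v_p(p)$. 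So the statement reduces to computing $\OO^{\times}/(\OO^{\times})^{2}$: then $K^{\times}/(K^{\times})^{2}$ is a finite abelian group annihilated by $2$, hence elementary abelian, so its isomorphism type is determined by its order, and a transversal is obtained by multiplying a transversal of $\OO^{\times}/(\OO^{\times})^{2}$ by $1$ and by $p$.

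The core is then to pin down $(\OO^{\times})^{2}$ by Hensel's lemma. If $p\neq 2$, then $v_p(2)=0$, Hensel applied to $X^{2}-u$ at $X=1$ gives $(1+p\OO)^{2}=1+p\OO$, and with the Teichm\"uller decomposition $\OO^{\times}\cong\mu_{p-1}\times(1+p\OO)$ (the $(p-1)$th roots of unity lie in $K$ by Hensel) together with $[\mu_{p-1}:(\mu_{p-1})^{2}]=2$ one gets $\OO^{\times}/(\OO^{\times})^{2}\cong\Fp^{\times}/(\Fp^{\times})^{2}\cong\mathbb{Z}/2\mathbb{Z}$, with transversal $\{1,\alpha\}$ for any unit $\alpha$ of non-square residue; hence $K^{\times}/(K^{\times})^{2}\cong\mathbb{Z}/2\mathbb{Z}\times\mathbb{Z}/2\mathbb{Z}$ with transversal $\{1,\alpha,p,\alpha p\}$. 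If $p=2$, then $v_p(2)=v_p(p)$ is the least positive element, so $2\OO\supseteq 4\OO\supseteq 8\OO$ are the first three powers of the maximal ideal and $\OO^{\times}=1+2\OO$; Hensel applied to $X^{2}-u$ at $X=1$ now needs $v_p(1-u)>2v_p(2)$, giving $1+8\OO\subseteq(\OO^{\times})^{2}$, and expanding $(1+a)^{2}=1+a(2+a)$, with $v_p(a)=1$ treated separately from $v_p(a)\geq 2$ (in the first case $a/2$ is a unit and, $\Fp^{\times}$ being trivial, $v_p(2+a)\geq 2$), gives the reverse inclusion; so $(\OO^{\times})^{2}=1+8\OO$. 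The chain $1+2\OO\supseteq 1+4\OO\supseteq 1+8\OO$ has successive quotients $\cong\Fp$, represented by $-1$ and by $5$, whence $\OO^{\times}/(\OO^{\times})^{2}\cong\mathbb{Z}/2\mathbb{Z}\times\mathbb{Z}/2\mathbb{Z}$ with transversal $\{1,-1,5,-5\}$; multiplying by $\{1,2\}$ yields $K^{\times}/(K^{\times})^{2}\cong\mathbb{Z}/2\mathbb{Z}\times\mathbb{Z}/2\mathbb{Z}\times\mathbb{Z}/2\mathbb{Z}$ with transversal $\{\pm1,\pm2,\pm5,\pm10\}$.

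The one genuinely delicate point is the $p=2$ identification $(\OO^{\times})^{2}=1+8\OO$: the inclusion $(\OO^{\times})^{2}\subseteq 1+8\OO$ must be checked by hand, and the filtration argument has to be carried out inside an arbitrary model of $\mathrm{Th}(\Qp)$ rather than inside $\Zp$ --- which is why I would phrase everything through ``$v_p(p)$ is the least positive element'' and through the powers of the maximal ideal. A much shorter alternative, if one is content to quote Serre's computation over $\Qp$, is to observe that ``$K^{\times}/(K^{\times})^{2}$ has exactly $n$ classes and admits a family of representatives one of which has non-square residue'' (and likewise the precise lists $\{1,\alpha,p,\alpha p\}$ and $\{\pm1,\pm2,\pm5,\pm10\}$) is expressible by a first-order $\mathcal{L}_R$-sentence, since $\OO$ and $res$ are $\mathcal{L}_R$-definable; the whole statement then transfers from $\Qp$ to every $p$-adically closed field by elementary equivalence. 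In the paper I would take this transfer route and keep the Hensel computation only for self-containedness.
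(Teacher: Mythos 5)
The paper offers no proof of this statement at all: it is recorded as a Fact with a bare citation to \cite{JPS}, where the computation is carried out for $\Qp$ itself. Your argument is correct and is therefore strictly more than what the paper provides; in particular you have put your finger on the one point the bare citation leaves implicit, namely that Serre's result is about $\Qp$ whereas the Fact is asserted for an arbitrary model of $\mathrm{Th}(\Qp)$, so one needs either the first-order transfer (using that $\OO$, $res$ and the $P_2$ predicate are $\mathcal{L}_R$-definable, exactly the device the paper itself uses repeatedly in Section 2) or a direct argument valid in any henselian field of characteristic $0$ with residue field $\Fp$ and $\Z$-group value group. Your direct route --- the exact sequence $1\to\OO^{\times}/(\OO^{\times})^{2}\to K^{\times}/(K^{\times})^{2}\to\Gamma/2\Gamma\to 1$, Hensel giving $(1+p\OO)^2=1+p\OO$ for $p\neq 2$ and $(\OO^{\times})^{2}=1+8\OO$ for $p=2$, and the Teichm\"uller splitting --- checks out in every detail, including the delicate verification that $(1+a)^2\in 1+8\OO$ for all $a$ of positive valuation when $p=2$. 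Either of your two routes would be an acceptable replacement for the citation; the transfer route is the one most consistent with the rest of the paper's methodology.
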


Then there are three $Q_{\delta}$ up to conjugacy for $p\neq 2$ (and seven for $p=2$). Everything we can do here remains true up to conjugacy  for every non square $\delta$. 

We will first work in $\Qp$ and then generalize to arbritrary model of $Th(\Qp)$. We need to separate the case $p=2$ and $p\neq 2$  : the results are similar except for specific values and the  demonstrations are the same  mutatis mutandis. That  is why, after explaining specificities of the two cases, we will work on case $p\neq 2$. 

We will fixe  special values of $\delta$: $\delta$ will be one of the representative elements $\{ \alpha , p, \alpha p \}$ for the non square in $\slfrac{\Qp^{\times}}{(\Qp^{\times})^2}$ if $p\neq 2$ and one of the $\{-1 ; \pm 2 ; \pm 5 ; \pm 10\}$ for $p=2$. In any case, we have $0\leq v_p(\delta)\leq 1$. With these notations, we can remark that $Q_{\delta}\subseteq SL_2(\Zp)$.
We put : 
$$\mbox{for }p\neq 2, n\geq0 \quad Z_{n,\delta}:=\left\{\begin{pmatrix}
a & b \\ b\delta & a
\end{pmatrix}\in SL_2(\Qp) \mid b\in p^n\Zp, a\in 1+p^{2n}\delta\Zp \mbox{ and } a^2-b^2\delta=1\right\}$$
$$\mbox{for }p= 2, n\geq 1 \quad Z_{n,\delta}:=\left\{\begin{pmatrix}
a & b \\ b\delta & a
\end{pmatrix}\in SL_2(\mathbb{Q}_2) \mid b\in p^n\mathbb{Z}_2, a\in 1+p^{2n-1}\delta\mathbb{Z}_2 \mbox{ and } a^2-b^2\delta=1\right\}$$
In order to simplify the notations, we will denote by $(a,b)$ an element of $Q_{\delta}$.

\begin{lem}
  
\begin{itemize}
	\item if $p\neq 2$ and for $(a,b)\in Z_{0, \delta}$, then \quad $b\in p^n\Zp^{\times}$ iff $a\in 1+p^{2n}\delta\Zp^{\times}$.
	\item if $p=2$ and for $(a,b)\in Z_{1,\delta}$, then  \quad $b\in p^n\mathbb{Z}_2^{\times}$ iff $a\in 1+p^{2n-1}\delta\mathbb{Z}_2^{\times}$.
\end{itemize}
\end{lem}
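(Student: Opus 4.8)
Here is how I would approach the proof.

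The plan is to derive both equivalences from a single identity on valuations obtained from the determinant relation. Fix $(a,b)\in Z_{0,\delta}$ (resp. $(a,b)\in Z_{1,\delta}$ when $p=2$); since $Q_{\delta}\subseteq SL_2(\Zp)$ we have $a,b\in\Zp$, and the chosen representatives satisfy $v_p(\delta)\in\{0,1\}$. The condition $a^{2}-b^{2}\delta=1$ factors as $(a-1)(a+1)=b^{2}\delta$, so taking $v_p$ of both sides gives
$$v_p(a-1)+v_p(a+1)=2\,v_p(b)+v_p(\delta).$$
Everything will follow once the valuation of $a+1$ is known.

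The heart of the argument, and the step I expect to be the main obstacle, is to show that $v_p(a+1)=0$ when $p$ is odd (resp. $v_p(a+1)=1$ when $p=2$). When $v_p(\delta)=1$ this is immediate: the membership condition built into $Z_{0,\delta}$ (resp. $Z_{1,\delta}$) then forces $a\in 1+p\Zp$ (resp. $a\in 1+4\mathbb{Z}_2$), so that $a-1$ has positive valuation and $a+1=2+(a-1)$ has valuation $v_p(2)$, which is $0$ for $p$ odd and $1$ for $p=2$. The case $v_p(\delta)=0$ is entirely analogous, the one extra point being to verify that an element $(a,b)$ in the relevant group still satisfies $a\equiv 1\pmod p$ (resp. $a\equiv 1\pmod 4$): this is precisely the input that makes the equivalence hold as stated, since otherwise $a\equiv -1\pmod p$ would make $a-1$ — not $a+1$ — a unit, and the correspondence between the valuations of $b$ and of $a-1$ would be lost.

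With $v_p(a+1)$ computed, the displayed identity collapses to $v_p(a-1)=2\,v_p(b)+v_p(\delta)$ for $p$ odd, and to $v_p(a-1)=2\,v_p(b)+v_p(\delta)-1$ for $p=2$. In the first case this says exactly that $v_p(b)=n$, i.e. $b\in p^{n}\Zp^{\times}$, if and only if $v_p(a-1)=2n+v_p(\delta)$, i.e. $a\in 1+p^{2n}\delta\Zp^{\times}$; in the second it says $v_p(b)=n$, i.e. $b\in p^{n}\mathbb{Z}_2^{\times}$, if and only if $v_p(a-1)=2n-1+v_p(\delta)$, i.e. $a\in 1+p^{2n-1}\delta\mathbb{Z}_2^{\times}$. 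This is the asserted statement, and the passage from the exponent $2n$ to $2n-1$ is just the shift already present in the definition of $Z_{n,\delta}$ between the two cases.
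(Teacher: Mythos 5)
Your factorization $(a-1)(a+1)=b^{2}\delta$ is a slightly cleaner packaging of what the paper actually does: the paper expands $(1+p^{k}\delta u)^{2}=1+p^{k}\delta u\,(2+p^{k}\delta u)$ and uses $p\neq 2$ to see that the cross term dominates, which is exactly your observation that $v_p(a+1)=v_p\bigl(2+(a-1)\bigr)=v_p(2)$ once $v_p(a-1)>v_p(2)$. In the case $v_p(\delta)=1$ your argument is complete, correct, and in substance identical to the paper's; the valuation identity $v_p(a-1)+v_p(a+1)=2v_p(b)+v_p(\delta)$ is a nice way to get both directions of the equivalence at once.

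The gap is the step you defer in the case $v_p(\delta)=0$. You correctly isolate that everything hinges on $a\equiv 1\pmod p$ (resp.\ $a\equiv 1\pmod 4$), but this cannot be ``verified'' from membership in $Z_{0,\delta}$ (resp.\ $Z_{1,\delta}$): when $\delta$ is a unit the condition $a\in 1+p^{0}\delta\Zp$ reads $a\in\Zp$ and carries no congruence information, and indeed $-I=(-1,0)$ lies in $Z_{0,\alpha}$ for $p$ odd (and in $Z_{1,\delta}$ for $p=2$ and unit $\delta$) with $a\equiv -1$. For such elements the stated equivalence genuinely fails: $a-1=-2$ has the valuation predicted for $n=0$ (resp.\ $n=1$) while $b=0$ is not a unit times the corresponding power of $p$. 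So for unit $\delta$ the lemma requires the extra restriction under which the paper later applies it, namely working inside $Z_{1,\alpha}$ for $p$ odd (resp.\ $Z_{2,\delta}$ for $p=2$), where the membership condition forces $v_p(a-1)\geq 2$ and your computation of $v_p(a+1)$ does go through. To be fair, the paper's own proof is silent on the same point (its unit computation $2u+p^{2n}\delta u^{2}\in\Zp^{\times}$ also breaks when $2n+v_p(\delta)=0$), so your proof is no worse; but as written the deferred verification is not routine --- it is false without that restriction, and the honest fix is to restrict the statement rather than to verify the congruence.
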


\begin{proof}
$\bullet$ We first deal with the case $p\neq 2$, let be $(a,b)\in Z_0$ with $b\in p^n\Zp^{\times}$ then $a$ is such that $a^2-b^2\delta =1$.
We have $a^2=1+b^2\delta$ so $a^2\in 1+p^{2n}\delta\Zp^{\times}$. We first show that $a^2\in 1+p^{2n}\delta\Zp^{\times}$ if and only if $a\in 1+p^{2n}\delta\Zp^{\times}$: 
\\ if $a=1+p^{2n}\delta u$ with $u\in \Zp^{\times}$ then $a^2=1+2p^{2n}\delta u+p^{4n}\delta^2u^2\in 1+p^{2n}\Zp^{\times }$ (because $p\neq2$) ;
\\ if $a=1+p^{k}\delta u$ with $k\neq 2n$, then $a^2\in 1+p^k\delta\Zp^{\times}$ and $a^2\nin 1+p^{2n}\delta\Zp^{\times}$.

Now if $a\in 1+p^{2n}\delta \Zp^{\times}$, we show that $v_p(b)=n$. We have $b^2\delta=a^2-1\in p^{2n}\delta \Zp^{\times}$, so $2v_p(b)+v_p(\delta)=2n+v_p(\delta)$ then $v_p(b)=n$.

$\bullet$ For $p=2$ we argue in the same way. We just need to prove that if $a\in 1+2^k\delta\mathbb{Z}_2^{\times}$, then $a^2\in 1+2^{k+1}\delta\mathbb{Z}_2^{\times}$: if $u\in \mathbb{Z}_2^{\times}$, $(1+2^{k}\delta u)^2=1+2^{k+1}\delta(u+2^{k-1}\delta u^2)\in 1+2^{k+1}\delta \mathbb{Z}_2^{\times}$.
\end{proof}

From now, we assume $p\neq 2$, the same proof will work for $p=2$ mutatis mutandis. 

We can remark that $Z_{n,\delta}$ is a subgroup : 
Let be $x,y\in Z_{n,\delta}$, $x=(1+p^{2n}\delta a, p^nb)$ and $y=(1+p^{2n}\delta a', p^nb')$.
$$xy=(1+p^{2n}\delta(a+a'+bb'+p^{2n}\delta aa'), p^n(b+b'+p^{2n} \delta ab'+p^{2n}\delta a'b))\quad (\star)$$
$$x^{-1}=(1+p^{2n}\delta , -p^n b)$$

We see then groups $Z_{n,\delta}$ form an infinite descending chain of definable subgroups. 
Before to show the main proposition, let us etablish some technical lemma. 

\begin{lem}\label{L1}
For $p\neq 2$, $\delta\in \{p,\alpha p\}$ and $n\geq 0$ ( or for $\delta=\alpha$ and $n\geq 1$).
\begin{enumerate}
	\item $\slfrac{Z_{n,\delta}}{Z_{n+1, \delta}}\cong \slfrac{\mathbb{Z}}{p\mathbb{Z}}$
	\item If $x \in Z_{n,\delta}\backslash Z_{n+1,\delta} $ then $x^{p^k}\in Z_{n+k,\delta}\backslash Z_{n+k+1,\delta}$
\end{enumerate}
\end{lem}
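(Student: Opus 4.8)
The plan for (1) is to exhibit $\mathbb Z/p\mathbb Z$ as an explicit quotient. I would define $\phi\colon Z_{n,\delta}\to p^n\Zp/p^{n+1}\Zp$ by $\phi(a,b)=b+p^{n+1}\Zp$, the target being cyclic of order $p$. Reading off the $b$-coordinate of a product from $(\star)$, the deviation from additivity of $\phi$ is a multiple of $p^{3n}\delta$, hence has $v_p$-valuation at least $3n+v_p(\delta)$; the hypothesis ($v_p(\delta)=1$ with $n\geq 0$, or $\delta=\alpha$ with $n\geq 1$) is exactly what forces this to be $\geq n+1$, so $\phi$ is a homomorphism. I would then check $\ker\phi=Z_{n+1,\delta}$: the inclusion $\supseteq$ is clear, and for $\subseteq$ one uses the relation $a^2-b^2\delta=1$ together with the lemma just above (which pins $a$ to $1+p^{2n}\delta\Zp^{\times}$ exactly when $v_p(b)=n$) to recover the required congruence on $a$. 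Finally I would obtain surjectivity from Hensel's lemma applied to $T^2-(1+p^{2n}\delta)$ at the approximate root $1$ — the valuation hypothesis on $n,\delta$ is precisely what makes $v_p(p^{2n}\delta)>2v_p(2)=0$ — producing $a\in 1+p\Zp$ with $a^2=1+p^{2n}\delta$, so that $(a,p^n)\in Z_{n,\delta}$ maps onto a generator. This gives $Z_{n,\delta}/Z_{n+1,\delta}\cong\mathbb Z/p\mathbb Z$.

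For (2), an immediate induction on $k$ reduces the claim to the case $k=1$: if $x\in Z_{m,\delta}\setminus Z_{m+1,\delta}$ (with $m$ in the allowed range) then $x^p\in Z_{m+1,\delta}\setminus Z_{m+2,\delta}$. Here $x^p\in Z_{m+1,\delta}$ is free from (1), since $Z_{m,\delta}/Z_{m+1,\delta}$ has order $p$; so the whole point is to prove $x^p\notin Z_{m+2,\delta}$. For that I would move to the quadratic field $L=\Qp(\sqrt\delta)$, identifying $Q_\delta(\Qp)$ with the norm-one subgroup of $L^{\times}$ via $(a,b)\leftrightarrow a+b\sqrt\delta$. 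Let $v_L$ be the valuation of $L$ normalized by $v_L(\varpi)=1$ for a uniformizer $\varpi$, so $v_L(p)=e$ with $e=1$ when $\delta$ is a unit and $e=2$ when $v_p(\delta)=1$. Using the norm relation to express $v_p(a-1)$ through $v_p(b)$, one translates membership into valuations: $x=(a,b)$ lies in $Z_{m,\delta}\setminus Z_{m+1,\delta}$ exactly when $v_L(x-1)=j$, where $j=m$ in the unramified case and $j=2m+1$ in the ramified case.

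Writing $x=1+y$ with $v_L(y)=j$ and expanding $x^p-1=py+\binom p2 y^2+\cdots+y^p$, the term $py$ has valuation $e+j$, the term $y^p$ has valuation $pj$, and every intermediate term has valuation $\geq e+2j$. As long as $e+j\neq pj$ this yields $v_L(x^p-1)=\min(e+j,pj)$; translating back through the norm relation for $x^p$, its $b$-coordinate gains exactly one in valuation, i.e. $x^p\in Z_{m+1,\delta}\setminus Z_{m+2,\delta}$, which closes the induction.

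The one delicate point — and what I expect is the real obstacle — is the boundary case $e+j=pj$, i.e. $e=(p-1)j$. Since $j\geq 1$ and $p\geq 3$, this occurs only for $p=3$, $e=2$, $j=1$, that is, $p=3$ with $\delta$ ramified and $m=0$. There the two terms of valuation $pj$ can cancel, and one must descend to the residue field: with $y=\varpi t$, $t$ a unit, their sum is $\varpi^{pj}t(c+t^{p-1})$ with $c=p\varpi^{-(p-1)j}\in\mathcal O_L^{\times}$, and one needs $\overline c+\overline t^{\,p-1}\neq 0$ in the residue field $\Fp$. For $\delta=p$ this holds automatically ($\overline c=1$, $\overline t^{\,2}=1$, sum $=2\neq 0$); but for $\delta=\alpha p$ one computes $\overline c=\overline\alpha^{-1}=-1$ in $\mathbb F_3$ while $\overline t^{\,2}=1$, so the sum vanishes and $x^p$ in fact lands in $Z_{2,\delta}$. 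Thus (2) as stated seems to need either $p\geq 5$, or $\delta=p$, or "$n\geq 1$" in place of "$n\geq 0$" in that sub-case; handling this boundary cleanly is, I believe, the crux of the argument.
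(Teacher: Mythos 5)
Your treatment of (1) is essentially the paper's: the same ``reduce the $b$-coordinate'' homomorphism, with $(\star)$ giving well-definedness and the norm relation $a^2-b^2\delta=1$ together with the preceding lemma giving the kernel; you are only more explicit than the paper about surjectivity (the paper asserts it, you exhibit the generator $(a,p^n)$ via Hensel). For (2) you take a genuinely different route: the paper raises $x$ to the $k$-th power coefficientwise by induction and reads off the valuation of the $b$-entry of $x^p$, while you pass to the norm-one group of $L=\Qp(\sqrt\delta)$ and compute $v_L(x^p-1)$ from the binomial expansion. Your version is cleaner and, crucially, it makes visible exactly where the argument can fail.

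That boundary case is not a defect of your write-up: part (2) of the lemma is in fact false as stated for $p=3$, $\delta=\alpha p$, $n=0$, exactly as you suspect. Concretely, with $p=3$, $\alpha=2$, $\delta=6$, the element $x=(-5,2)$ lies in $Z_{0,6}\setminus Z_{1,6}$ (indeed $25-24=1$ and $-5-1=-6\in 6\mathbb{Z}_3$, while $v_3(2)=0$), yet $x^3=(-485,198)$ with $v_3(198)=2$, so $x^3\in Z_{2,6}$ rather than $Z_{1,6}\setminus Z_{2,6}$. (Structurally: the $\sqrt\delta$-coefficient of $x^3$ is $b(3a^2+b^2\delta)=b(4a^2-1)=b\cdot 3\left(1+4b^2\tfrac{\delta}{3}\right)$, and $1+4b^2\tfrac{\delta}{3}\equiv 1+\overline{b}^{\,2}\overline{\alpha}\equiv 1-1=0$ in $\mathbb{F}_3$ precisely because $\overline{\alpha}=-1$ is the nonresidue.) The paper's own proof breaks at the same spot: from its induction the $b$-entry of $x^p$ is $p^{n+1}(b+p^{2n-1}\delta b')$, and the assertion that $B=b+p^{2n-1}\delta b'$ is a unit is automatic only when $2n+v_p(\delta)\geq 2$; for $n=0$ and $v_p(\delta)=1$ it requires $p\mid b'$, which holds for $p\geq 5$ and for $\delta=p$ but fails for $p=3$, $\delta=\alpha p$. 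So your proposed repair (exclude the single case $p=3$, $\delta=\alpha p$, $n=0$, e.g. by demanding $n\geq 1$ there) is the right one; note that the exclusion must then be propagated to the proof of Proposition \ref{L2}, which invokes (2) to replace an element by a power lying in a prescribed layer of the filtration.
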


\begin{proof}
\begin{enumerate}
	\item We define : $$\begin{array}{crcc} \varphi : & Z_{n,\delta} & \longrightarrow & \slfrac{\Zp}{p\Zp}
\\&(1+p^{2n}\delta a,p^n b) &\longmapsto & b \mod p\Zp\end{array}$$
$(\star)$ show that $\varphi$ is a well-defined surjective homomophism from the group $Z_{n,\delta}$ to the additive group $\slfrac{\Zp}{p\Zp}$. Its kernel is $Z_{n+1,\delta}$, so $\slfrac{Z_{n,\delta}}{Z_{n+1,\delta}}\cong \slfrac{\mathbb{Z}}{p\mathbb{Z}}$.
	\item Let us show by induction that : 
	$$\mbox{if } x=(1+p^{2n}\delta a, p^nb)\mbox{, then } x^k=(1+p^{2n}\delta a' ,p^n(kb+p^{2n}\delta b'))$$
	For $k=1$, this is obvious. 
	
	Suppose now that $x^k=(1+p^{2n}\delta a' ,p^n(kb+p^{2n}\delta b'))$, then :
\begin{align}
x^kx & = (1+p^{2n}\delta a',p^n (kb+p^{2n}\delta b'))(1+p^{2n}\delta a,p^n b) \nonumber \\	
	& = (1+ p^{2n}\delta(a+a'+b(kb+p^{2n}\delta b')+p^{2n}\delta aa'),p^n((k+1)b+p^{2n}\delta b'+p^{2n}\delta a'b+p^{2n}\delta(a(kb+p^{2n}\delta)))) \nonumber \\
x^{k+1}&= (1 + p^{2n}\delta a'',p^n((k+1)b+p^{2n}\delta b'')) \nonumber 
\end{align}
	So if $x\in (1+p^{2n}\delta \Zp^{\times}, p^n\Zp^{\times}) \subseteq Z_{n,\delta}\backslash Z_{n+1,\delta}$ then $x^p=(1+p^{2n}\delta A, p^{n+1} B)$ with $B\in \Zp^{\times}$, and by the initial remark $A\in p^2\Zp^{\times}$, i.e. $x^p\in Z_{n+1,\delta}\backslash Z_{n+2,\delta}$. 
	
	Another induction shows that $x^{p^k}\in Z_{n+k,\delta}\backslash Z_{n+k+1,\delta}$.
\end{enumerate}
\end{proof}

\begin{lembis}{L1}
For $p=2$, $v_p(\delta)=1$ and $n\geq 1$ ( or for $v_p(\delta)=0$ and $n\geq 1$).
\begin{enumerate}
	\item $\slfrac{Z_{n,\delta}}{Z_{n+1, \delta}}\cong \slfrac{\mathbb{Z}}{p\mathbb{Z}}$
	\item If $x \in Z_{n,\delta}\backslash Z_{n+1,\delta} $ then $x^{p^k}\in Z_{n+k,\delta}\backslash Z_{n+k+1,\delta}$
\end{enumerate}
\end{lembis}

\begin{prop}\label{L2}
For $p\neq 2$ and $\delta \in\{p,\alpha p\}$ (resp. for $\delta=\alpha$).
\begin{enumerate}
	\item $Z_{0,\delta}$ (resp. $Z_{1,\delta})$ is of finite index in $Q_{\delta}$.
	\item The $Z_{n,\delta}$ are the only one subgroups of $Z_{0,\delta}$ (resp. $Z_{1,\delta}$) definable in $\Qp^{an}$.
\end{enumerate}
\end{prop}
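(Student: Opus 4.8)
The plan is to prove the two items separately; both rest on Lemma \ref{L1}, which already tells us that the $Z_{n,\delta}$ form a descending chain of open subgroups with $Z_{n,\delta}/Z_{n+1,\delta}\cong\Z/p\Z$ and $x^{p^k}\in Z_{n+k,\delta}\setminus Z_{n+k+1,\delta}$ for $x\in Z_{n,\delta}\setminus Z_{n+1,\delta}$; in other words $Z_{0,\delta}$ (resp. $Z_{1,\delta}$) is a procyclic pro-$p$ group, topologically isomorphic to $(\Zp,+)$.

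For item 1 I would first check that $Q_\delta$ is compact. Since the chosen representatives satisfy $0\le v_p(\delta)\le 1$, a short valuation computation on the relation $a^2-b^2\delta=1$ forces $v_p(b)\ge 0$: if $v_p(b)<0$ then either $v_p(a^2)$ would be odd (when $v_p(\delta)=1$), or, when $\delta=\alpha$, one would get $\alpha$ a square modulo $p$; hence $a,b\in\Zp$ and $Q_\delta\subseteq SL_2(\Zp)$. Being moreover Zariski closed, $Q_\delta$ is closed and bounded, so compact for the valuation topology. On the other hand $Z_{0,\delta}$ (resp. $Z_{1,\delta}$) is definable and infinite, so of dimension $1=\dim Q_\delta$; by Lemma \ref{lemdim} (and Remark \ref{Qpan} in the $\Qp^{an}$ case) it has non-empty interior in $Q_\delta$, and a subgroup with non-empty interior is open. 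An open subgroup of a compact group has finite index, which is item 1.

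For item 2, let $H\le Z_{0,\delta}$ (resp. $Z_{1,\delta}$) be definable in $\Qp^{an}$. If $H$ is finite it is trivial, since $Z_{0,\delta}$ is torsion free: by the second part of Lemma \ref{L1} every $x\ne I$ lies in some $Z_{m,\delta}\setminus Z_{m+1,\delta}$, whence $x^{p^k}\ne I$ for all $k$, while an element of order prime to $p$ would lie in $\bigcap_n Z_{n,\delta}=\{I\}$ because each $Z_{n,\delta}/Z_{n+1,\delta}$ is a $p$-group. If $H$ is infinite, then $\dim H=1=\dim Z_{0,\delta}$, so again $H$ has non-empty interior, hence is open, hence contains some $Z_{N,\delta}$ (the $Z_{n,\delta}$, cut out by the open conditions $b\in p^n\Zp$ and $a\in 1+p^{2n}\delta\Zp$, form a neighbourhood basis of $I$ in $Z_{0,\delta}$). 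Let $n$ be minimal with $Z_{n,\delta}\subseteq H$. If $n=0$ then $H=Z_{0,\delta}$. If $n\ge 1$, then $Z_{n-1,\delta}\not\subseteq H$, and simplicity of $Z_{n-1,\delta}/Z_{n,\delta}\cong\Z/p\Z$ forces $H\cap Z_{n-1,\delta}=Z_{n,\delta}$; moreover $H\subseteq Z_{n-1,\delta}$, for if some $y\in H$ lay in $Z_{m,\delta}\setminus Z_{m+1,\delta}$ with $m<n-1$, then by the second part of Lemma \ref{L1} the element $y^{p^{n-1-m}}$ would lie in $(Z_{n-1,\delta}\setminus Z_{n,\delta})\cap H=\emptyset$. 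Hence $H=H\cap Z_{n-1,\delta}=Z_{n,\delta}$. (For $\delta=\alpha$ one works with $Z_{1,\delta}$ in place of $Z_{0,\delta}$ throughout, obtaining the $Z_{n,\delta}$ with $n\ge 1$.) The converse direction is immediate: each $Z_{n,\delta}$ is defined by a quantifier-free $\mathcal{L}_R$-formula, hence is definable in $\Qp^{an}$.

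I do not expect a genuine obstacle: the substantive work is already done in Lemma \ref{L1}, and once it is available item 2 is an elementary lattice computation inside $\Zp$. The one step deserving care is the passage from \emph{definable} to \emph{open} --- namely that an infinite definable subgroup of $Z_{0,\delta}$ has the full dimension $1$ and therefore non-empty interior --- since this is exactly where the $p$-adic dimension theory enters, and it is also what lets the proposition hold in $\Qp^{an}$ (through Remark \ref{Qpan}) and not merely in the pure field language.
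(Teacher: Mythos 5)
Your proof of item 2 is essentially the paper's own argument: an infinite definable subgroup of $Z_{0,\delta}$ has dimension $1$, hence non-empty interior, hence is open and contains some $Z_{n,\delta}$; minimality of $n$ together with Lemma \ref{L1} then forces $H=Z_{n,\delta}$. (The paper phrases the last step by picking $x\in H\setminus Z_{n_0,\delta}$, replacing it by a $p$-th power lying in $Z_{n_0-1,\delta}\setminus Z_{n_0,\delta}$ and deducing $Z_{n_0-1,\delta}\subseteq H$, a contradiction; your version via $H\cap Z_{n-1,\delta}=Z_{n,\delta}$ and $H\subseteq Z_{n-1,\delta}$ is the same computation, and your explicit verification that $Z_{0,\delta}$ is torsion-free fills in a point the paper only asserts.) For item 1 you take a genuinely different route: the paper exhibits $Z_{0,\delta}$ as the kernel of the explicit homomorphism $\psi:Q_\delta\to k'^{\times}$, $(a,b)\mapsto res(a+b\sqrt{\delta})$, into the finite multiplicative group of the residue field of $\Qp(\sqrt{\delta})$, which yields finite index immediately and with an explicit bound; you instead note that $Q_\delta$ is compact (contained in $SL_2(\Zp)$ and closed) and that $Z_{0,\delta}$ is open in it by the same dimension argument as in item 2, so finite index follows from compactness. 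Both arguments are valid in $\Qp$ and $\Qp^{an}$, which is the setting of this proposition --- the passage to arbitrary $p$-adically closed fields, where compactness is unavailable, is handled separately by elementary equivalence in Theorem \ref{L3}, so nothing is lost. Your approach is softer and avoids the quadratic extension; the paper's is more explicit about the index.
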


\begin{proof}
We will work with $\delta\in \{p, \alpha p\}$, the other case is similar :
\begin{enumerate}	
	\item We consider $\Qp(\sqrt{\delta})$ the quadratic extension of $\Qp$ and $k'$ its residue field, $k'$ is a finite field. 
	Let $\psi$ be the following group homomorphism  : $$\begin{array}{crcc} \psi : & Q_{\delta} & \longrightarrow & k'^{\times}
\\&(a,b) &\longmapsto & res(a+b\sqrt{\delta}) \end{array}$$
We see that $\ker \psi =Z_{0,\delta}$ and $Z_{0,\delta}$ is of finite index. 	
	\item Let $H\leq Z_{0,\delta}$ be a nontrivial definable subgroup. Since $Z_{0,\delta}$ does not have torsion, $H$ is infinite. Then $\dim H=1=\dim Z_{0,\delta}$ and $H$ has non empty interior in $Z_{0,\delta}$, so $H$ is open in $Z_{0,\delta}$. Now it will suffice to show that the $Z_{n,\delta}$ are the only open subgroups. 
	
	The $Z_{n,\delta}$ form an open neighborhood basis of $I$ in $Q_{\delta}$. Let $H\leq Z_{0,\delta}$ be an open subgroup. $H$ contains some open neighborhood $Z_{n,\delta}$. We note $n_0$ the smallest such that $Z_{n_0,\delta}\subseteq H$. If $H\neq Z_{n_0,\delta}$, then there exists $x\in H\cap Z_{n_1,\delta}$ with $n_1<n_0$. Replacing $x$ with some $x^{p^k}$, we may assume that $x\in Z_{n_0-1,\delta}\backslash Z_{n_0,\delta}$.  We have $\slfrac{Z_{n_0-1,\delta}}{Z_{n_0,\delta}}\cong \slfrac{\mathbb{Z}}{p\mathbb{Z}}$ so $x^t$ with $0\leq t\leq p$ is a complete system of representatives of class modulo $Z_{n_0,\delta}$. So $Z_{n_0-1,\delta}\subseteq H$ and  this contradicts minimality of $n_0$.
\end{enumerate}
\end{proof}

\begin{propbis}{L2}
For $p= 2$ and $v_p(\delta)=1$ (resp. for $v_p(\delta)=0$).
\begin{enumerate}
	\item $Z_{1,\delta}$ (resp. $Z_{2,\delta})$ is of finite index in $Q_{\delta}$.
	\item The $Z_{n,\delta}$ are the only subgroups of $Z_{1,\delta}$ (resp. $Z_{2,\delta}$) definable in $\mathbb{Q}_2^{an}$.
\end{enumerate}
\end{propbis}

\bigskip

We can now generalize to $K$  $p$-adically closed.
We consider a sequence $(a_{\gamma})_{\gamma\in \Gamma}$ of elements of $K$ indexed by the value group such that $v_p(a_{\gamma})=\gamma$. We have 
$$a_{\gamma}\mathcal{O}=\{x\in K\mid v_p(x)\geq \gamma\}$$

We can define similarly : 
$$ Z_{\gamma,\delta}:=\left\{\begin{pmatrix}
a & b \\ b\delta & a
\end{pmatrix}\in SL_2(K) \mid b\in a_{\gamma}\mathcal{O}, a\in 1+a_{2\gamma}\delta\mathcal{O} \mbox{ and } a^2-b^2\delta=1\right\}$$

We have the same proposition :
\begin{thm}\label{L3}
If $K$ is a $p$-adically closed field (or a model of $Th(\Qp^{an})$). For $p\neq 2$ and $\delta \in\{p,\alpha p\}$ (resp. for $\delta=\alpha$).
\begin{enumerate}
	\item $Z_{0,\delta}$ (resp. $Z_{1,\delta})$ is of finite index in $Q_{\delta}$.
	\item The $Z_{\gamma,\delta}$ are the only one definable subgroups of $Z_{0,\delta}$ (resp. $Z_{1,\delta}$).
\end{enumerate}
\end{thm}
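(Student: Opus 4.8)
The plan is not to redo the topological analysis carried out for $\Qp$ in Proposition~\ref{L2}, but to deduce Theorem~\ref{L3} from it by transfer along elementary equivalence: every $p$-adically closed field is elementarily equivalent to $\Qp$ in $\mathcal{L}_R$, and every model of $Th(\Qp^{an})$ is elementarily equivalent to $\Qp^{an}$ in $\mathcal{L}_{an}$, so it is enough to observe that each assertion of the theorem is (a scheme of) first-order sentence(s) already verified in the base case.

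For point~1 I would argue directly. With the notation of the proof of Proposition~\ref{L2}, $L=K(\sqrt{\delta})$ is a $p$-adically closed field interpretable in $K$, its residue field $k'$ is a finite set interpretable in $K$ whose cardinality depends only on $v_p(\delta)$ (hence is the same as over $\Qp$), and
$$\psi\colon Q_{\delta}\longrightarrow (k')^{\times},\qquad (a,b)\longmapsto \mathrm{res}_L\!\big(a+b\sqrt{\delta}\big)$$
is a definable group homomorphism. The computation made in the proof of Proposition~\ref{L2} is purely valuation-theoretic, hence uniform, and gives $\ker\psi=Z_{0,\delta}$ (resp. $Z_{1,\delta}$ when $\delta=\alpha$); since $(k')^{\times}$ is finite this proves point~1. (Alternatively, $[Q_{\delta}:Z_{0,\delta}]\le N_0$ for the appropriate $N_0$ is a first-order sentence true in $\Qp$, hence true in $K$.)

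For point~2 the key observation is that the family $(Z_{\gamma,\delta})_{\gamma\in\Gamma}$ is uniformly definable — one has $(a,b)\in Z_{\gamma,\delta}$ iff $v_p(b)\ge\gamma$, $v_p(a-1)\ge 2\gamma+v_p(\delta)$ and $a^{2}-b^{2}\delta=1$ — and that $\Gamma$, together with $v_p$ and $\OO$, is interpretable in $K$ in the field language (hence also in $\mathcal{L}_{an}$). Consequently, for each formula $\varphi(\bar x,\bar y)$ of $\mathcal{L}_R$ (resp. $\mathcal{L}_{an}$) there is a single sentence $\sigma_{\varphi}$ asserting
$$\forall\bar y:\ \big[\varphi(\cdot,\bar y)\mbox{ defines a subgroup of }Z_{0,\delta}\big]\ \rightarrow\ \big[\exists\gamma\in\Gamma\ (\gamma\ge 0\ \wedge\ \varphi(\cdot,\bar y)=Z_{\gamma,\delta})\big],$$
to which one adds the uniform (hence finitely expressible over $\Gamma$) statements that each $Z_{\gamma,\delta}$ with $\gamma\ge 0$ is a subgroup of $Z_{0,\delta}$ and that $\gamma<\gamma'$ implies $Z_{\gamma',\delta}\subsetneq Z_{\gamma,\delta}$. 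Each $\sigma_{\varphi}$ holds in $\Qp$ (resp. $\Qp^{an}$): this is exactly Proposition~\ref{L2} (and the version stated for $p=2$) — an $\mathcal{L}_R$-formula is in particular an $\mathcal{L}_{an}$-formula, and since $\Gamma(\Qp)=\mathbb{Z}$ the groups $Z_{n,\delta}$ of that proposition are precisely the $Z_{\gamma,\delta}$ with $\gamma\ge 0$. As $K\equiv\Qp$ (resp. $K\equiv\Qp^{an}$), every $\sigma_{\varphi}$ holds in $K$; letting $\varphi$ range over all formulas gives that the definable subgroups of $Z_{0,\delta}$ (resp. $Z_{1,\delta}$) in $K$ are exactly the $Z_{\gamma,\delta}$.

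The step I expect to be the main obstacle is precisely the first-order formalisation in point~2: one must carefully unwind "$\varphi(\cdot,\bar y)$ defines a subgroup of $Z_{0,\delta}$" and "$\varphi(\cdot,\bar y)=Z_{\gamma,\delta}$" into genuine formulas, which is possible thanks to the uniform definability of $(Z_{\gamma,\delta})_{\gamma}$ and the interpretability of $\Gamma$ recalled above. It should be stressed that the detour through elementary equivalence is essential rather than cosmetic: the direct proof of Proposition~\ref{L2}, which reduces an arbitrary open subgroup to some $Z_{\gamma_0,\delta}$ via the maps $x\mapsto x^{p^{k}}$ and Lemma~\ref{L1}, breaks down when $\Gamma$ is non-archimedean, since from an element of ``level'' $\gamma$ those maps reach only the levels $\gamma+\mathbb{N}$ and cannot bridge an infinite gap in $\Gamma$; transferring from $Th(\Qp)$ (resp. $Th(\Qp^{an})$) avoids this, and is also what makes the statement available over models of $Th(\Qp^{an})$, where dimension is no longer compatible with the algebraic closure. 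Finally, the case $\delta=\alpha$ follows by replacing $Z_{0,\delta}$ with $Z_{1,\delta}$ throughout, and the case $p=2$ is identical mutatis mutandis, using the versions of Lemma~\ref{L1} and Proposition~\ref{L2} stated for $p=2$.
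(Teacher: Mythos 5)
Your proof is correct and follows essentially the same route as the paper: the paper also deduces Theorem \ref{L3} from Proposition \ref{L2} by elementary equivalence, using the uniform definability of the family $(Z_{\gamma,\delta})_{\gamma\in\Gamma}$ and a scheme of sentences, one for each candidate defining formula $\psi(x,\bar b)$, asserting that any subgroup it defines coincides with some instance $\varphi(x,a)$. The only (harmless) divergence is in point~1, which the paper transfers as a first-order sentence as well, whereas you re-run the residue-map argument of Proposition \ref{L2} directly over $K$; both work, and your explicit remark on why the direct open-subgroup argument fails when $\Gamma$ is non-archimedean is a useful clarification the paper leaves implicit.
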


\begin{proof}
The property being true for $\Qp$, we reason by elementary equivalence. 
The $Z_{\gamma,\delta}$ are uniformly definable by $\varphi(x, a_{\gamma})$. As $Q_{\delta}$ is definable without parameters, for every formula $\psi(x,\bar{b})$ with parameters $\bar{b}$ in $K$, we have : 
$$\Qp\models \exists g_1, ... , g_n\in Q_{\delta} \forall x\in Q_{\delta} \quad \bigvee_{i=1}^n \varphi(g_i^{-1}x,1)$$
$$\Qp\models \forall \bar{b}\ \  "\psi(x,\bar{b}) \mbox{ defines a subgroup of } Q_{\delta}" \longrightarrow \exists a\forall x\   (\psi(x,\bar{b})\longleftrightarrow \varphi(x,a))$$
We can deduce the property for $K$.
\end{proof}

For $p=2$, we can similarly define : 
$$ Z_{\gamma,\delta}:=\left\{\begin{pmatrix}
a & b \\ b\delta & a
\end{pmatrix}\in SL_2(K) \mid b\in a_{\gamma}\mathcal{O}, a\in 1+a_{2\gamma-1}\delta\mathcal{O} \mbox{ and } a^2-b^2\delta=1\right\}$$
we have yet : 

\begin{thmbis}{L3}\label{L3bis}
If $K$ is a $p$-adically closed field (or a model of $Th(\mathbb{Q}_2^{an})$). For $p= 2$ and $v_p(\delta)=1$ (resp. for $v_p(\delta)=0$).
\begin{enumerate}
	\item $Z_{1,\delta}$ (resp. $Z_{2,\delta})$ is of finite index in $Q_{\delta}$.
	\item The $Z_{\gamma,\delta}$ are the only one definable subgroups of $Z_{1,\delta}$ (resp. $Z_{2,\delta}$).
\end{enumerate}
\end{thmbis}

\begin{rmk}
A natural question is to describe definable subgroups of $U$, $Q_1$ and $Q_{\delta}$, for $K$ a field elementarily equivalent to a finite extension of $\Qp$. This aim is more complicated and withaout real interest. For example, if $K$ is a totally ramificated extension of $\Qp$ of degree $n$, then definable subgroups of $K^+$ are of the form :
$$\sum_{i=0}^{n-1} \zeta_i\Zp+\pi^{\gamma} \OO$$
where $\OO$ is the valuation ring of $K$, $\gamma\in \Gamma$ and $\zeta\in K$.
\end{rmk}

We described the definable subgroups of $U$, $Q_1$ and $Q_{\delta}$ for $\delta\in K^{\times}\backslash (K^{\times})^2$, by Proposition \ref{solvable} and Fact \ref{subgroupsB}, we know what the definable subgroups of $B$ are. Then it give us a complete description of all definable nipotent or solvable subgroups of $SL_2(K)$. What about non solvable definable subgroups?

% Non solvable subgroups
\subsection{Non solvable definable subgroups}

For $x\in SL_2(K)$, we call valuation of $x$ the minimum $v(x)$ of the valuations of its coefficients. 
A subgroup $H$ of $SL_2(K)$ is said to be \textit{bounded} if there is some $m\in \Gamma$ such that : 
$$\forall x\in H \quad v(x)\geq m$$

\begin{rmk}
For $K=\Qp$, and for $H$ a definable subgroup of $SL_2(\Qp)$, \emph{bounded} means exactly \emph{compact}.
\end{rmk}

\begin{thm}\label{unbounded}
For $K$ a $p$-adically closed field and $H$ a definable subgroup of $SL_2(K)$. 
If $H$ is non-solvable and unbounded, then $H=SL_2(K)$.
\end{thm}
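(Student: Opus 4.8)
The plan is to exploit the structural results already established: since $H$ is non-solvable, Proposition \ref{dimension} gives $\dim H = 3$, so $H$ has non-empty interior in $SL_2(K)$ by the topological compatibility of dimension (Lemma \ref{lemdim}). Thus $H$ is an open subgroup of $SL_2(K)$. The heart of the argument is then to show that an \emph{open unbounded} subgroup of $SL_2(K)$ must be everything.

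\textbf{Key steps.}
First I would record that $H$ being open implies $H$ contains an open neighbourhood of $I$; a natural choice is a principal congruence subgroup, i.e. $H \supseteq \Gamma_\gamma := \{ g \in SL_2(\OO) \mid g \equiv I \bmod a_\gamma \OO \}$ for some $\gamma \in \Gamma$ (for $\Qp$ this is $\{g \in SL_2(\Zp) \mid g \equiv I \bmod p^n\}$). Since $H$ is a subgroup, $H$ also contains the subgroup generated by $\Gamma_\gamma$ together with any element of $H$. Second, I would use unboundedness: there is $h \in H$ with $v(h)$ arbitrarily negative. Conjugating $\Gamma_\gamma$ by such an $h$, or rather examining products $h^{-1}\Gamma_\gamma h$, spreads the congruence subgroup out — the key computation is that conjugating the elementary-unipotent-type elements $I + a_\gamma E_{12}$, $I + a_\gamma E_{21}$ (which lie in $\Gamma_\gamma \subseteq H$) by a matrix of large negative valuation produces unipotent elements $I + c E_{12}$, $I + c E_{21}$ with $v(c)$ as negative as we like. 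Hence $H$ contains $U^+ = \{I + cE_{12} \mid c \in K\}$ (or a conjugate together with the opposite root subgroup): indeed $H \cap U^+$ is an unbounded definable subgroup of $K^+ \cong U^+$, and by the earlier analysis of definable subgroups of $K^+$ (they are of the form $a_\gamma \OO$, all bounded) an unbounded one must be all of $K^+$. Third, once $H$ contains both root subgroups $U^+$ and its transpose $(U^+)^t$, I would invoke the standard fact that these generate $SL_2(K)$ (Gauss elimination / the fact that $SL_2$ of a field is generated by elementary matrices), giving $H = SL_2(K)$.

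\textbf{Filling the gap between "open" and "contains a root subgroup".}
The one point needing care is getting from $H$ open-and-unbounded to $H \supseteq U^+$ up to conjugacy. I would argue: pick $h \in H$ with $v(h) = m$ very negative. Write $h = \begin{pmatrix} a & b \\ c & d\end{pmatrix}$ with $\min(v(a),v(b),v(c),v(d)) = m < 0$. Using that $\Gamma_\gamma \subseteq H$ and computing $h (I + a_\gamma E_{ij}) h^{-1}$, one gets matrices in $H$ whose off-diagonal entries have valuation down to $2m + \gamma$, which $\to -\infty$. More cleanly: the projection of $H$ to any matrix coordinate is a definable subset of $K$ invariant under the relevant subgroup action, and $H \cap U^+$ (after possibly conjugating so that $h$ has a suitable Bruhat form, using $N_S(Q_1) = Q_1 \cdot \langle \omega \rangle$ to get the Weyl element) is a definable unbounded subgroup of $U^+ \cong K^+$, hence equals $U^+$. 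Then conjugating by $\omega \in SL_2(\OO) \subseteq H$ (note $\omega$ has valuation $0$, so $\omega \in \Gamma_0 \subseteq$ \dots — actually $\omega \notin \Gamma_\gamma$ for $\gamma > 0$, so this needs the slightly stronger statement that $H$, being open, contains $SL_2(\OO)$ once we know it contains enough; alternatively use that $H$ non-solvable forces $H \not\leq B^g$, so $H$ contains an element not normalizing $U^+$) yields the opposite root subgroup, and we conclude.

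\textbf{Main obstacle.}
I expect the genuinely delicate step to be the passage from "$H$ is open and unbounded" to "$H$ contains a full root subgroup (up to conjugacy)" — one must combine the openness (giving a congruence subgroup inside $H$) with unboundedness (giving an element of very negative valuation) and push them together via an explicit conjugation computation, then invoke that the only definable subgroups of $K^+$ are the bounded lattices $a_\gamma\OO$ so that the unbounded trace $H \cap U^+$ has to be all of $U^+$. Everything after that — that $U^+$ and its transpose generate $SL_2(K)$, and that non-solvability rules out $H$ sitting inside a Borel or a torus-normalizer (Proposition \ref{solvable}, Proposition \ref{dimension}) — is standard and already available in the excerpt.
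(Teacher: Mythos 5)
Your overall strategy coincides with the paper's: dimension $3$ forces $H$ to contain a congruence neighbourhood of $I$, unboundedness must then be parlayed into a full root subgroup, and $U^+$ together with its transpose generates $SL_2(K)$ (the paper phrases this last step as $w\in H$, $Q_1\subseteq H$, $B\subseteq H$ and Bruhat decomposition, which is the same standard fact). The gap is in the middle step, and it is a real one. Conjugating $I+\epsilon E_{12}\in H$ by an element $h=\left(\begin{array}{cc}a&b\\c&d\end{array}\right)\in H$ of very negative valuation gives
$$h(I+\epsilon E_{12})h^{-1}=I+\epsilon\left(\begin{array}{cc}-ac&a^2\\-c^2&ac\end{array}\right),$$
which is indeed unipotent with entries of very negative valuation, but it does \emph{not} lie in $U^+$ unless $c=0$: it lies in the conjugate $(U^+)^{h^{-1}}$, and this conjugate varies with $h$. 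So you do not obtain an unbounded definable subgroup of one fixed copy of $U^+\cong K^+$, and the appeal to ``every proper definable subgroup of $K^+$ is of the form $a_\gamma\OO$, hence bounded'' cannot yet be made. Your two fallback suggestions do not close this: invoking $\omega\in SL_2(\OO)\subseteq H$ is circular, since nothing so far places $SL_2(\OO)$ (or even $\omega$) inside $H$, and ``putting $h$ in a suitable Bruhat form'' is precisely the missing argument rather than a proof of it.

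The paper closes this gap by a different device. Using the partition of Corollary \ref{union} and the fact that every conjugate of an anisotropic torus $Q_\delta$ is bounded, it deduces that, up to conjugacy, $H\cap Q_1$ or $H\cap U^+$ is unbounded, and treats the two cases separately. If $H\cap Q_1=\left(\begin{array}{cc}P&0\\0&P\end{array}\right)$ is unbounded, then $v(P)$ is unbounded below and acting by these diagonal elements on the congruence piece $\left(\begin{array}{cc}1&a_{\gamma_2}\OO\\0&1\end{array}\right)$ sweeps out all of $U^+$ (the computation $P\cdot Z=K$); the transposed root subgroup comes from the lower-left congruence piece the same way, and the explicit relations $(R1)$, $(R2)$ then yield $w$ and $Q_1$ inside $H$. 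If instead $H\cap U^+$ is unbounded, then $U^+\subseteq H$ at once, and $(R1)$, $(R2)$ together with the fact that a nontrivial definable subgroup of a $\mathbb{Z}$-group is unbounded reduce everything to the first case. To rescue your ``conjugate the congruence subgroup by an unbounded $h$'' idea you would still need an argument pinning the resulting unipotents into a single conjugate of $U^+$; the trace-based case split via Corollary \ref{union} is what does that work in the paper.
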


The theorem remains true for $K$ a henselian valued field of characteristic 0 whose value group is a $\mathbb{Z}$-group. For example, $K$ can be a finite extension of a $p$-adically closed field.  

\begin{proof}
$H$ is not solvable, so by proposition \ref{dimension}, we have $\dim H=3=\dim SL_2(K)$, and $H$ contains a neighborhood of the identity (Lemma \ref{lemdim}), then
$$\left(\begin{array}{cc}1+a_{\gamma_1}\mathcal{O} & a_{\gamma_2}\mathcal{O} \\a_{\gamma_3}\mathcal{O} & 1+a_{\gamma_4}\mathcal{O}\end{array}\right) \cap SL_2(K)\subseteq H$$
for some $a_{\gamma_i}\in K$ such that $v(a_{\gamma_i})=\gamma_i$. In particular :
$$\left(\begin{array}{cc}1 & a_{\gamma_2}\mathcal{O} \\0 & 1\end{array}\right)
\subseteq H$$
We note $Z=a_{\gamma_2}\mathcal{O}$.

$H$ is not bounded, so by Corollary \ref{union}, $H\cap Q_1$ or $H\cap U$ is non bounded. 
\begin{enumerate}
	\item If $H\cap Q_1$ is not bounded : we note $P$ the subgroup of $K^{\times}$ such that :
	$$H\cap Q_1=\left(\begin{array}{cc}P & 0 \\0 & P\end{array}\right)$$
	Let be $x\in K$ and $t\in P$ such that $v(t)< v(x a_{\gamma_2}^{-1})$. Then there is some $u\in \mathcal{O}$ such that $x=ta_{\gamma_2}u$. It follows that $P\cdot Z=K$. From 
	$$\left(\begin{array}{cc}t & 0 \\0 & t^{-1}\end{array}\right)\left(\begin{array}{cc}t^{-1} & u \\0 & t\end{array}\right)=\left(\begin{array}{cc}1 & tu \\0 & 1\end{array}\right)$$	
	we deduce that $U^{+}\subseteq H$, we can show the same for the transpose $^tU^+\subseteq H$. By 
	$$\left(\begin{array}{cc}1 & t \\0 & 1\end{array}\right)\left(\begin{array}{cc}1 & 0 \\-t^{-1} & 1\end{array}\right)\left(\begin{array}{cc}1 & t \\0 & 1\end{array}\right)=\left(\begin{array}{cc}0 & t \\-t^{-1} & 0\end{array}\right)    \quad (R1)  $$  $$\mbox{ and }\quad 
	\left(\begin{array}{cc}0 & t \\-t^{-1} & 0\end{array}\right)\left(\begin{array}{cc}0 & -1 \\1 & 0\end{array}\right)=\left(\begin{array}{cc}t & 0 \\0 & t^{-1}\end{array}\right) \quad(R2)$$
	we conclude that $w\in H$ and $Q_1\subseteq H$. Finally $B\subseteq H$ and by Bruhat decomposition $H=SL_2(K)$.
	
	\item If $H\cap U^+$ is  unbounded, then $U^+\subseteq H$ because every proper subgroup of $U^+$ is bounded. We also know that 
	$$\left(\begin{array}{cc}1 & 0 \\a_{\gamma_3}\mathcal{O} & 1\end{array}\right)\subseteq H$$
	If $\gamma_3\leq 0$, then by $(R1)$, $w\in H$. If not, take $t\in K$ such that $v(t)\geq \gamma_3$, then by $(R1)$ :
	$$\left(\begin{array}{cc}0 & -t^{-1} \\t & 0\end{array}\right), \left(\begin{array}{cc}0 & -t^{-2} \\t^2 & 0\end{array}\right)\in H \mbox{ and } \left(\begin{array}{cc}0 & -t^{-1} \\t & 0\end{array}\right)\left(\begin{array}{cc}0 & -t^{-2} \\t^2 & 0\end{array}\right)	=\left(\begin{array}{cc}-t & 0 \\0 & -t^{-1}\end{array}\right)\in H$$
	 In any case, $H$ contains an element of $Q_1$ of non-zero valuation. Because $\Gamma$ is a $\mathbb{Z}$-group, any definable subgroup of $\Gamma$ is either trivial or non bounded. Indeed, for $\varphi(x,\bar{a})$ a formula, we have 
	 $$ \mathbb{Z}\models \forall \bar{a}\  "\varphi(x,\bar{a}) \mbox{ defines a subgroup of }\mathbb{Z}" \longrightarrow \forall x \  [\varphi(x,\bar{a}) \rightarrow \exists y \ (y>x \wedge \varphi(y,\bar{a}))]$$
	  Since $v(P)$ is a non-trivial definable subgroup of $\Gamma$, it is unbounded and $H\cap Q_1$ is unbounded. We can conclude using the first case. 
\end{enumerate}
\end{proof}

\begin{rmk}
A consequence of the previous theorem is that, for $K$ a $p$-adically closed field, $SL_2(K)$ is definably connected (this means that it does not have a proper definable subgroup of finite index).
\end{rmk}

\begin{fact}[{\cite[Chap II, 1.3, Proposition 2]{Serrearbres}}]
For $K$ a $p$-adically closed field and $H$ a definable subgroup of $SL_2(K)$. 
If $H$ is bounded then $H$ is contained in a conjugate of $SL_2(\mathcal{O})$. 
\end{fact}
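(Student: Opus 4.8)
The plan is to reduce to $\Qp$ by elementary equivalence, in the spirit of the proofs of Theorems~\ref{L3} and \ref{unbounded}, and to settle the case $K=\Qp$ by a fixed-point argument on the Bruhat--Tits tree, which is exactly what underlies Serre's statement. So first let $K=\Qp$ and let $H$ be a bounded definable subgroup of $SL_2(\Qp)$; by the Remark above, $H$ is then compact.

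Recall the tree $X$ of $SL_2(\Qp)$: its vertices are the homothety classes of $\Zp$-lattices in $\Qp^2$, with an edge between two classes admitting representatives $\Lambda\supsetneq\Lambda'$ with $\Lambda/\Lambda'\cong\Zp/p\Zp$; the group $GL_2(\Qp)$ acts on $X$ transitively on vertices, and the stabilizer in $SL_2(\Qp)$ of the class of $g\cdot\OO^2$ is $g\,SL_2(\OO)\,g^{-1}$ (a short determinant computation: an element of $SL_2(\Qp)$ fixing that class lies in $g(\Qp^{\times}\cdot GL_2(\OO))g^{-1}$, the scalar is forced to have valuation $0$, and the determinant being $1$ then puts the element in $g\,SL_2(\OO)\,g^{-1}$). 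Since $\det g=1$ for $g\in SL_2(\Qp)$, the action preserves the canonical bipartition of $X$, hence is without inversions. The vertex stabilizers are compact-open, so each $H$-orbit on $X$ is finite, in particular bounded; a group acting on a tree with a bounded orbit fixes a vertex or stabilizes an edge, and in the absence of inversions it then fixes a vertex $v$. Writing $v$ as the class of $g\cdot\OO^2$ gives $H\leq g\,SL_2(\OO)\,g^{-1}$. (Alternatively one may simply invoke \cite[Chap.~II, 1.3, Proposition~2]{Serrearbres} verbatim at this stage.)

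To transfer to an arbitrary $p$-adically closed $K$, fix an $\mathcal{L}_R$-formula $\psi(x,\bar y)$. Since $\OO$, hence $SL_2(\OO)$, is $\mathcal{L}_R$-definable, and since ``the group defined by $\psi(\cdot,\bar b)$ is bounded'' is expressed by $\exists a\neq 0\ \forall x\,(\psi(x,\bar b)\rightarrow\text{all entries of }x\text{ lie in }a^{-1}\OO)$, the assertion ``for all $\bar b$, if $\psi(\cdot,\bar b)$ defines a bounded subgroup of $SL_2$ then $\exists g\in GL_2\ \forall x\,(\psi(x,\bar b)\rightarrow g^{-1}xg\in SL_2(\OO))$'' is a single first-order sentence. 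It holds in $\Qp$ by the previous paragraph, hence in $K\equiv\Qp$; and every definable subgroup of $SL_2(K)$ is of the form $\psi(\cdot,\bar b)(K)$ for some such $\psi$ and $\bar b\in K$, so the statement follows.

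The step requiring the most care is organizing the transfer so that it is genuinely first order: this is why boundedness and ``contained in a conjugate of $SL_2(\OO)$'' must be routed through the explicit definability of $\OO$, rather than attempting a Bruhat--Tits argument directly over $K$, whose value group is only a $\mathbb{Z}$-group, so that $\OO$ need not be a discrete valuation ring and the natural object is a $\Gamma$-tree instead of an ordinary simplicial tree.
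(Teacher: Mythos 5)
Your argument is correct. Note that the paper offers no proof of this statement at all: it is recorded as a Fact with a bare citation to Serre's \emph{Arbres, amalgames, $SL_2$}, so there is nothing to compare step by step. Your reconstruction of the tree argument for $K=\Qp$ is exactly what underlies Serre's proposition (bounded orbit, no inversions because $SL_2$ preserves the type of a lattice class, vertex stabilizers equal to the $GL_2(\Qp)$-conjugates of $SL_2(\Zp)$), and your transfer to an arbitrary model of $Th(\Qp)$ is a genuine and worthwhile addition rather than a redundancy: Serre's result is stated for a field with a discrete valuation, whereas the value group of a general $p$-adically closed field is only a $\Z$-group, so the simplicial Bruhat--Tits tree is not available over $K$ and some first-order reformulation such as yours is needed --- a point the citation passes over in silence, even though the paper uses the same style of transfer elsewhere (Theorem \ref{L3}, Proposition \ref{bounded}). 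Two small remarks: you could avoid appealing to compactness of $H$ (i.e.\ to the unproved Remark preceding the Fact) by observing directly that boundedness of the matrix entries already bounds the orbit of the standard lattice class in the tree metric, which is all the fixed-point lemma requires; and your quantification of $g$ over $GL_2$ rather than $SL_2$ is the correct reading of ``a conjugate of $SL_2(\OO)$'' here, since the stabilizers of vertices of the two types are $GL_2(K)$-conjugate but not $SL_2(K)$-conjugate to one another.
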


We denote :
$$H_{\gamma,\eta_1,\eta_2}=\left(\begin{array}{cc}1+a_{\gamma}\mathcal{O} & a_{\eta_1}\mathcal{O} \\a_{\eta_2}\mathcal{O} & 1+a_{\gamma}\mathcal{O}\end{array}\right)\cap SL_2(K) \quad \mbox{ with } v_p(a_{\gamma})=\gamma \mbox{ and } v_p(a_{\eta_i})=\eta_i$$
If $\eta_1+\eta_2\geq \gamma\geq 0$, then $H_{\gamma,\eta_1,\eta_2}$ is a subgroup of $SL_2(K)$, and it is a neighborhood of the identity. The groups of the form $H_{\gamma,\eta_1,\eta_2}$ are examples of definable non sovable bounded subgroups of $SL_2(K)$. The following proposition gives a sort of converse of this fact :

%$\bullet$ Let us remark that $\left(\begin{array}{cc}1+p^k\Zp & p^{n_1}\Zp \\p^{n_2}\Zp & 1+p^k\Zp\end{array}\right)$ with $n_1+n_2\geq k\geq 0$ is a subgroup of $GL_2(\Qp)$, it follow that $\left(\begin{array}{cc}1+p^k\Zp & p^{n_1}\Zp \\p^{n_2}\Zp & 1+p^k\Zp\end{array}\right)\cap SL_2(\Qp)$ is a subgroup of $SL_2(\Qp)$ and an open neighborhood de $I$. 

\begin{prop}\label{bounded}
Let $K$ be a $p$-adically closed field  and $H$ a definable subgroup of $SL_2(K)$. If $H$ is bounded and non solvable and if $w$ normalize $H$ then, up to conjugacy,
\begin{itemize}
	\item either, there exists $\gamma,\eta\in \Gamma$ and $a_{\gamma},a_{\eta}\in K$ with $v_p(a_{\gamma })=\gamma$ and $v_p(a_{\eta})=\eta$ such that $H_{\gamma, \eta, \eta}$ is subgroup of finite index of $H$ at most $2(p-1)$ if $p\neq 2$ (or at most $4$ if $p=2$), where : 
$$H_{\gamma,\eta,\eta}=\left(\begin{array}{cc}1+a_{\gamma}\mathcal{O} & a_{\eta}\mathcal{O} \\a_{\eta}\mathcal{O} & 1+a_{\gamma}\mathcal{O}\end{array}\right) \cap SL_2(K) \quad \mbox{ where } 2\eta\geq \gamma> 0$$ 
	\item or $H=SL_2(\mathcal{O})$.
\end{itemize}
\end{prop}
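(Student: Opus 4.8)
The plan is to read off an exponent $\eta\in\Gamma$ from the unipotent part of $H$, show $\eta\ge 0$, and then treat the cases $\eta=0$ and $\eta>0$ separately, the second one being by far the more delicate.

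First, since $H$ is non-solvable, Proposition~\ref{dimension} gives $\dim H=3=\dim SL_2(K)$, so by Lemma~\ref{lemdim} $H$ has non-empty interior in $SL_2(K)$ and hence contains a basic neighbourhood of $I$. In particular $H\cap U^{+}$ and $H\cap{}^{t}U^{+}$ are non-trivial definable subgroups of $U^{+}\cong K^{+}$ and ${}^{t}U^{+}\cong K^{+}$, so by the classification of definable subgroups of $K^{+}$ they have the shape $\{\left(\begin{smallmatrix}1&u\\0&1\end{smallmatrix}\right):u\in a_{\eta_1}\OO\}$ and $\{\left(\begin{smallmatrix}1&0\\u&1\end{smallmatrix}\right):u\in a_{\eta_2}\OO\}$. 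Conjugation by $w$ interchanges $U^{+}$ and ${}^{t}U^{+}$ (it sends $\left(\begin{smallmatrix}1&u\\0&1\end{smallmatrix}\right)$ to $\left(\begin{smallmatrix}1&0\\-u&1\end{smallmatrix}\right)$) and $w$ normalises $H$, whence $\eta_1=\eta_2=:\eta$. Since $H$ is bounded, $\langle H\cap U^{+},\ H\cap{}^{t}U^{+}\rangle\le H$ is bounded; but a direct computation shows this group is unbounded when $\eta<0$ (already the powers of $\left(\begin{smallmatrix}1&x\\0&1\end{smallmatrix}\right)\left(\begin{smallmatrix}1&0\\y&1\end{smallmatrix}\right)$ with $v_p(x)=v_p(y)=\eta$ have entries of valuation tending to $-\infty$), so $\eta\ge 0$. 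Moreover, by the standard elementary-matrix decompositions in $SL_2$ over the local ring $\OO$, this subgroup equals $SL_2(\OO)$ when $\eta=0$ and equals $H_{2\eta,\eta,\eta}$ when $\eta>0$; in either case it is contained in $H$.

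If $\eta=0$, then $SL_2(\OO)\subseteq H$. As $H$ is bounded it lies in a conjugate of $SL_2(\OO)$ by Serre's theorem \cite{Serrearbres}, and since $SL_2(\OO)$ is a maximal bounded subgroup of $SL_2(K)$ (it is the full stabiliser of its unique fixed vertex in the Bruhat--Tits tree), this conjugate is $SL_2(\OO)$ itself; hence $H=SL_2(\OO)$, the second alternative. Now assume $\eta>0$, so $H\supseteq H_{2\eta,\eta,\eta}$. Let $P:=H\cap Q_1$ be the diagonal part of $H$; it is a bounded definable subgroup of $Q_1\cong K^{\times}$, hence $P\subseteq\OO^{\times}$, and by the proposition on bounded definable subgroups of $K^{\times}$ it contains $1+a_{\gamma}\OO$ as a subgroup of index at most $p-1$ (at most $2$ if $p=2$) for some $\gamma>0$; moreover $\gamma\le 2\eta$ because $P$ contains the diagonal part $1+a_{2\eta}\OO$ of $H_{2\eta,\eta,\eta}$. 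Fix this pair $(\gamma,\eta)$; then $2\eta\ge\gamma>0$, and row-reducing an arbitrary $g\in H_{\gamma,\eta,\eta}$ exhibits it as a product of a matrix $\left(\begin{smallmatrix}t&0\\0&t^{-1}\end{smallmatrix}\right)$ with $t\in 1+a_{\gamma}\OO$ and of two elementary unipotents with off-diagonal entry in $a_{\eta}\OO$, all of which lie in $H$; hence $H_{\gamma,\eta,\eta}\le H$.

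It remains to bound $[H:H_{\gamma,\eta,\eta}]$ by $2(p-1)$ (resp.\ $4$). A direct matrix computation using $0<\gamma\le 2\eta$ shows that every element of $H\cap B$ — which by Fact~\ref{subgroupsB} has the shape $\left(\begin{smallmatrix}t&u\\0&t^{-1}\end{smallmatrix}\right)$ with $t\in\OO^{\times}$, $u\in a_{\eta}\OO$ — normalises $H_{\gamma,\eta,\eta}$, and by applying the automorphism induced by $w$ the same holds for the lower-triangular elements of $H$; so $H_{\gamma,\eta,\eta}\unlhd H$. The quotient $H/H_{\gamma,\eta,\eta}$ is finite, since $H_{\gamma,\eta,\eta}$ is a definable open subgroup of the bounded group $H$, and the claimed bound on its size, being first-order and uniform in the defining parameters, may be verified in $\Qp$ and transferred by elementary equivalence. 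To bound this finite quotient one reduces, after a further harmless conjugation (obtained by a fixed-point argument for the finite-order element $w$ acting on the Bruhat--Tits tree), to the case $H\le SL_2(\OO)$, so that the reduction map $SL_2(\OO)\to SL_2(\Fp)$ is available; since $H\cap U^{+}$ and $H\cap{}^{t}U^{+}$ have depth $\eta$ they become trivial in $H/H_{\gamma,\eta,\eta}$, so the quotient only reflects the torsion of $P$ in $K^{\times}$ (index $\le p-1$, resp.\ $2$) together with a Weyl-type contribution of size $\le 2$ controlled by $w$ (with $w^{2}=-I$), which gives the bound. I expect this last counting step — and the bookkeeping needed to keep track of $w$ while conjugating $H$ into $SL_2(\OO)$ — to be the main obstacle; everything preceding it is routine matrix algebra together with the already-established descriptions of the definable subgroups of $K^{+}$ and $K^{\times}$.
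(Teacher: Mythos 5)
Your first half is correct and in places cleaner than the paper's argument. Like the paper, you extract the triangular data of $H$ and manufacture a group $H_{\gamma,\eta,\eta}$ inside $H$; but your observation that the subgroup generated by $H\cap U^{+}$ and $H\cap{}^{t}U^{+}$ already equals $SL_2(\mathcal{O})$ (when $\eta=0$) or $H_{2\eta,\eta,\eta}$ (when $\eta>0$) yields the inequality $\gamma\le 2\eta$ for free, which disposes of the paper's contradictory case ``$2n<k$, $n>0$'' without any work; your $\eta=0$ alternative matches the paper's case $n=0$, and the row-reduction showing $H_{\gamma,\eta,\eta}\le H$ is sound.

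The gap is exactly where you predicted it: the bound $[H:H_{\gamma,\eta,\eta}]\le 2(p-1)$. Two concrete problems. First, you verify that the upper- and lower-triangular elements of $H$ normalise $H_{\gamma,\eta,\eta}$, but $H$ need not be generated by its triangular elements: an element of $H\le SL_2(\mathcal{O})$ all of whose entries are units factors as a lower unipotent times a diagonal matrix times an upper unipotent whose off-diagonal entries are \emph{units}, and those unipotent factors do not lie in $H$ when $\eta>0$; so normality is not established. Second, and more seriously, the assertion that the quotient ``only reflects the torsion of $P$ together with a Weyl-type contribution of size $\le 2$'' presupposes that every element of $H$ lies in $B_H\cdot H_{\gamma,\eta,\eta}\,\cup\,B_H\, w\, H_{\gamma,\eta,\eta}$. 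That is precisely the content of the paper's step ``$H\cup wH=B_H\cup B_H w B_H$'' (a Bruhat-type decomposition of the bounded group $H\cup wH$ relative to $B_H$), and it is the one point that carries all the weight of the proof. Compactness in $\Qp$ gives finiteness of the index, and the transfer to an arbitrary $p$-adically closed $K$ by a first-order sentence is routine, but neither supplies the numerical bound: you must actually produce the coset representatives, i.e.\ prove some form of that decomposition for $H$. Until that is done the counting step, and hence the proposition, is not proved.
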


\begin{proof}
We reason in $\Qp$, up to conjugacy, we can assume that $H\leq SL_2(\Zp)$.
We denote $B_H=B\cap H$, we know by the Fact \ref{subgroupsB}, that :
$$B_H=\left(\begin{array}{cc}P & Z \\0 & P\end{array}\right) \quad \mbox{ with } Z=p^n\Zp \mbox{ and } 1+p^k\Zp\leq P \mbox{ of finite index at most }(p-1)$$
for some $n\geq 0$ and $k\geq 1$. 

$\bullet$ If $2n\geq k$, then there exists a subgroup $H'$ such that $B_H\leq H'\leq H$ and $H'$ of the form $\left(\begin{array}{cc}P & Z \\Z & P\end{array}\right)$ (it suffices to take $H'=B_H\cdot V$ where $ V$ is a neighborhood of the indentity in $SL_2(\Qp)$).
Replacing $H'$ by $\pm H'$, we may assume that $-I\in H'$. Since $H'^w=H'$, $H'\cup wH'$ is a subgroup containing $H'$ as a subgroup of index 2. Similarly, we can can assume that $-I\in H$ and $H$ of index at most 2 in the subgroup $H\cup wH$. By Bruhat decomposition, $H\cup wH=B_H \cup B_H w B_H$. Since $B_H\subseteq H'$ and $w\in H'\cup wH'$, $H\cup wH=H'\cup wH'$, then $H'$ is a subgroup of index at most 2 in $H$.  For $$\left(\begin{array}{cc}x(1+p^ka) & p^nb \\p^nc & x'(1+p^kd)\end{array}\right)\in H',\ (\mbox{where }x,x' \mbox{ are } p^{th} \mbox{ roots of the unity and } a,b,c,d\in\Zp )$$  because the determinant is 1, we see that $xx'=1$, and so 
$$\left(\begin{array}{cc}x(1+p^ka) & p^nb \\p^nc & x'(1+p^kd)\end{array}\right)=\left(\begin{array}{cc}x & 0 \\0 & x'\end{array}\right)\left(\begin{array}{cc}1+p^ka & x^{-1}p^nb \\x'^{-1}p^nc' & 1+p^kd\end{array}\right)$$
In other terms, $H_{k,n,n}$ is subgroup  of finite index at most $(p-1)$ in $H'$ so at most $2(p-1)$ in $H$.

$\bullet$ If $2n<k$ with $n>0$, then $B_H$ and $B_H^w$ generate $H'=\left(\begin{array}{cc}P' & Z \\Z & P'\end{array}\right)$ with $1+p^{2n}\Zp\leq P'$ of finite index at most $(p-1)$ and $P<P'$. This is a contradiction, because $H'\cap B$ must be contained in $B_H$. 

$\bullet$ If $2n<k$ and $n=0$, then $\left(\begin{array}{cc}1 & \Zp \\0 & 1\end{array}\right)\subseteq H$, and by the action of $w$, $\left(\begin{array}{cc}1 & 0 \\\Zp & 1\end{array}\right)\subseteq H$. 
By $(R1)$ and $(R2)$ we can deduce that $w\in H$, $\left(\begin{array}{cc}\Zp^{\times} & 0 \\0 & \Zp^{\times}\end{array}\right)\subseteq H$ and $\left(\begin{array}{cc}\Zp^{\times} & \Zp \\0 & \Zp^{\times}\end{array}\right)\subseteq H$. By Bruhat decomposition on $SL_2(\Zp)$, we have $H=SL_2(\Zp)$.

By elementarily equivalence we can now easily deduce the proposition for all $p$-adically closed fields $K$ :  for $\varphi(\bar{x},\bar{a})$ a formula, as $H_{\gamma,\eta,\eta}$ is definable by $\psi(\bar{x},\bar{b})$ 
\begin{align}
K\models \forall\bar{a} \ &\mbox{"}\varphi(\bar{x},\bar{a}) \mbox{ defines a bounded subgroup of } SL_2(K)\mbox{ of dimension  3 and normalised by }w\mbox{"} \nonumber\\
& \longrightarrow  \left[\exists \bar{b}\ \exists \bar{x}_1, ... , \bar{x}_{2(p-1)} \bigwedge_{0\leq i\leq 2(p-1)} \varphi(\bar{x}_i,\bar{a})\ \wedge\  \forall \bar{y} \ (\varphi(\bar{y},\bar{a})\rightarrow \bigvee_{0\leq i\leq2(p-1)}\psi(\bar{y}\cdot \bar{x}_i^{-1},\bar{b}))\right] \nonumber
\end{align}

%$\bullet$ We suppose $w\in H$, then $w^2=-I\in H$. We put $B_H=B\cap H=\left(\begin{array}{cc}P & Z \\0 & P\end{array}\right)$, where $Z$ is a definable subgroup of $\Qp^+$ so $Z=p^n\Zp$ for some $n\in \mathbb{Z}$, and $P$ is a subgroup of $\Qp^{\times}$ then $P$ contains $1+p^k\Zp$ as a definable subgroup of finte at most $(p-1)$, more $\pm I\in P$. We have $P\cdot Z\subseteq Z$ then $k\geq 0$.

\end{proof}

\begin{conj}
For $K$ a $p$-adically closed field  and $H$ a definable subgroup of $SL_2(K)$. If $H$ is bounded and non solvable and if $w$  does not normalize $H$ then, up to conjugacy, there exists $\gamma,\eta_1,\eta_2\in \Gamma$ such that $H_{\gamma, \eta, \eta}$ is subgroup of finite index of $H$ at most $2(p-1)$ if $p\neq 2$ (or at most $4$ if $p=2$).
\end{conj}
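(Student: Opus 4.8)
The plan is to follow the architecture of the proof of Proposition \ref{bounded}, replacing the step where the symmetry of $H$ under $w$ was exploited (via the Bruhat relation $H\cup wH=B_H\cup B_HwB_H$) by a direct analysis of the compact open subgroup $H$ through its position in the Bruhat--Tits tree, equivalently through the associated $\Zp$-order in $\mathfrak{sl}_2(\Qp)$. First I would reduce to $K=\Qp$ exactly as at the end of the proof of Proposition \ref{bounded}: once the target index $c$ ($=2(p-1)$ for $p\neq2$, $=4$ for $p=2$) is fixed, the assertion ``if $\varphi(\bar x,\bar a)$ defines a bounded non-solvable subgroup of $SL_2$ not normalized by $w$, then some conjugate contains an $H_{\gamma,\eta_1,\eta_2}$ of index $\leq c$'' is first-order, using that the $H_{\gamma,\eta_1,\eta_2}$ are uniformly definable and that boundedness, having dimension $3$ (hence, by Proposition \ref{dimension}, non-solvability) and ``not normalized by $w$'' are all expressible; so it suffices to prove the statement in $\Qp$.

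In $\Qp$: since $H$ is bounded it lies in a conjugate of $SL_2(\Zp)$ by the fact recalled above from \cite{Serrearbres}, and since $H$ is non-solvable one has $\dim H=3$ by Proposition \ref{dimension}, so $H$ contains a neighbourhood of $I$ by Lemma \ref{lemdim} and is therefore an open (compact) subgroup of $SL_2(\Zp)$ that is not normalized by $w$ (a little care is needed when conjugating $H$ into $SL_2(\Zp)$, since this can move $w$; one uses that $SL_2(\Qp)$ has only finitely many conjugacy classes of maximal compact subgroups). Next I would classify, up to $GL_2(\Qp)$-conjugacy and finite index, the open subgroups $H\leq SL_2(\Zp)$. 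Passing to $\exp\log$ of a small enough open subgroup, $H$ normalizes a $\Zp$-lattice $L\subseteq\mathfrak{sl}_2(\Qp)$ closed under the Lie bracket and spanning (as $\dim H=3$), i.e. a $\Zp$-order, and $L$ determines an open subgroup $H_L=H_{\gamma,\eta_1,\eta_2}$; the orders in $\mathfrak{sl}_2(\Qp)$ are, via $p$-adic Lie theory, up to conjugacy the ``diagonal'' ones $p^{\eta_1}\Zp\,e+p^{\gamma}\Zp\,h+p^{\eta_2}\Zp\,f$ with $\eta_1+\eta_2\geq\gamma\geq0$ (the inequalities shifting by $1$ when $p=2$, exactly as for the $Z_{n,\delta}$ of Theorem \ref{L3}), the two maximal cases giving $SL_2(\Zp)$ and its conjugate at the other vertex of the tree. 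Conjugating such an order by $\mathrm{diag}(p^{j},p^{-j})$ sends $(\eta_1,\eta_2)$ to $(\eta_1+j,\eta_2-j)$ while fixing $\gamma$, so after a further conjugation one may take $\eta_1=\eta_2=:\eta$ (or $\{\eta,\eta+1\}$, according to the parity of $\eta_1+\eta_2$). The hypothesis that $w$ does not normalize $H$ excludes precisely the symmetric configuration already treated in Proposition \ref{bounded} together with the case $H=SL_2(\Zp)$; in the remaining cases $H$ normalizes $H_{\gamma,\eta,\eta}$, and it remains to bound $[H:H_{\gamma,\eta,\eta}]$. For this I would argue as in Proposition \ref{bounded}: the quotient acts faithfully on $L/pL$ stabilising the image of the order, hence embeds in the $\Fp$-points of the corresponding stabilizer (a torus-by-finite group), contributing at most $(p-1)$ in the $B\cap H_L$-direction and a bounded factor from the Weyl element not lying in $H$, for a total $\leq 2(p-1)$ (resp. $\leq 4$).

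The main obstacle is this passage. Without the $w$-symmetry the decomposition $H\cup wH=B_H\cup B_HwB_H$ used in Proposition \ref{bounded} is unavailable, so one genuinely needs the full classification of compact open subgroups of $SL_2(\Qp)$ (equivalently of $\Zp$-orders in $\mathfrak{sl}_2$, i.e. of stabilizers of balls in the Bruhat--Tits tree), and one must rule out that some ``exotic'' open subgroup --- for instance one containing no split torus, built out of an anisotropic torus $\Qp(\sqrt\delta)$ as the $Z_{n,\delta}$ are in the one-dimensional setting --- could occur with unbounded index over every standard $H_{\gamma,\eta_1,\eta_2}$. Moreover the \emph{sharp} index $2(p-1)$ (resp. $4$), rather than merely ``finite'', has to be controlled uniformly, and the parity obstruction on $\eta_1+\eta_2$ noted above has to be absorbed into this constant; this bookkeeping, together with the input from $p$-adic Lie theory it rests on, is presumably why the statement is left as a conjecture. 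An alternative, more computational route avoiding Lie theory would mimic Proposition \ref{bounded} literally: study $B_H=\begin{pmatrix}P & p^n\Zp\\0 & P\end{pmatrix}$ and $B^{-}\cap H=\begin{pmatrix}P^{-} & 0\\p^m\Zp & P^{-}\end{pmatrix}$, conjugate by $\mathrm{diag}(p^{j},1)$ to make $n=m$, then use the relations $(R1)$, $(R2)$ and the Bruhat decomposition of $SL_2(\Zp)$ to show $H_{\gamma,n,n}\leq H$ with the claimed index --- but there the same difficulty resurfaces as the task of proving $P=P^{-}$ and that $H$ contains the appropriate Iwahori-type block.
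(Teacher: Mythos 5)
The statement you are proving is left as a \emph{conjecture} in the paper: there is no proof of it there, so there is nothing to compare your argument against, and the relevant question is only whether your proposal actually closes the gap. It does not. What you have written is a plausible strategy outline, and to your credit you identify its weak points yourself, but those weak points are precisely the content of the conjecture, so the proposal cannot be accepted as a proof. Concretely: (i) the $\exp$--$\log$ passage to a $\Zp$-order is only valid after shrinking $H$ to a sufficiently small (uniformly powerful) open subgroup, and every such shrinking loses exactly the quantitative control --- the index bound $2(p-1)$ (resp.\ $4$) --- that the conjecture asserts; nothing in your sketch recovers it. (ii) The classification of $\Zp$-orders in $\mathfrak{sl}_2(\Qp)$ up to conjugacy as the ``diagonal'' ones $p^{\eta_1}\Zp e+p^{\gamma}\Zp h+p^{\eta_2}\Zp f$ is asserted without proof, and it is exactly here that the possible ``exotic'' compact open subgroups (e.g.\ those built around an anisotropic torus $Q_\delta$, the three-dimensional analogue of the chain $Z_{n,\delta}$ of Theorem~\ref{L3}, or Iwahori-type stabilizers of edges of the tree) must be excluded or shown to contain a standard $H_{\gamma,\eta_1,\eta_2}$ with \emph{bounded} index; you name this as ``the main obstacle'' but do not resolve it. (iii) The final step, bounding $[H:H_{\gamma,\eta,\eta}]$ by the order of a stabilizer acting on $L/pL$, is a heuristic, not an argument: without the $w$-symmetry the Bruhat decomposition trick of Proposition~\ref{bounded} is unavailable, and you give no substitute that pins the constant down to $2(p-1)$.

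The first-order transfer step at the beginning is fine and matches the paper's standard technique (as in Theorem~\ref{L3} and Proposition~\ref{bounded}), and the reduction to a compact open subgroup of $SL_2(\Zp)$ via \cite{Serrearbres}, Proposition~\ref{dimension} and Lemma~\ref{lemdim} is also sound. But the core of the conjecture --- the classification of compact open subgroups not normalized by $w$ together with the sharp index bound --- remains open in your write-up, exactly as it does in the paper. As a research plan this is reasonable; as a proof it has a genuine gap at its central step.
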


\begin{rmk}
By the remark on page \pageref{Qpan}, we can transfer theorem \ref{unbounded} and proposition \ref{bounded} to $\Qp^{an}$, if we replace "non solvable" by "of dimension 3".
\end{rmk}

The following tabular sum up the description of all definable subgroups of $SL_2(K)$ up to conjugacy for $K$ a $p$-adically closed field : \label{tabular}
\\ \begin{tabular}{|c|c|p{7.5cm}|}
\hline
Algebraic properties & "Frame" subgroups & \centering Definable subgroups in $p$-adically closed field \tabularnewline
\hline
\multirow{3}*{Nilpotent }&$U$ and $U_+$ & \centering $\left(\begin{array}{cc}1 & a_{\gamma} \mathcal{O} \\0 & 1 \end{array}\right)$ or $\left(\begin{array}{cc}\pm 1 & a_{\gamma} \mathcal{O} \\0 & \pm 1\end{array}\right)$\tabularnewline
\cline{2-3}
 & $Q_1$ &  \centering virtually  $\left(\begin{array}{cc}1+a_{\gamma}\mathcal{O} & 0 \\0 & 1+a_{\gamma}\mathcal{O}\end{array}\right)$ or  $\left(\begin{array}{cc}\{b_{\gamma}\}_{\gamma\in n\Gamma}\cdot(1+a_{\gamma}\mathcal{O}) & 0 \\0 & \{b_{\gamma}\}_{\gamma\in n\Gamma}\cdot(1+a_{\gamma}\mathcal{O})\end{array}\right)$\tabularnewline
 \cline{2-3}
 & $Q_{\delta}$ & \centering $Z_{\gamma,\delta}$ for $\gamma\in \Gamma^{>0}$ \tabularnewline
\hline
\multirow{2}*{Solvable, non nilpotent} & $N_G(Q_1)$& \\
\cline{2-3}
& $B$ & \centering $\left(\begin{array}{cc}P & Z \\0 & P\end{array}\right)$ with $P\leq K^{\times}$, $Z\leq K^+$ and $P\cdot Z\subseteq Z$\tabularnewline
\hline
Non solvable, compact &  & \centering contained in $SL_2(\mathcal{O})^g$ ($g\in SL_2(K)$)\tabularnewline
\cline{2-3}
Non solvable, non compact &   & \centering $ SL_2(K)$\tabularnewline
\hline
\end{tabular}

%GENEROSITY 
\section{Generosity of the Cartan subgroups}
%Our purpose is now to show the generosity of the Cartan subgroup $Q_1$. It follows from the next more general proposition :
The important notion of genericity was particularly developped by Poizat for groups in stables theories \cite{Poizat_stable}. In any group that admits a geometric dimension notion, one expects to characterise genericity in terms of maximal dimension. 
%For a group of finite Morley rank, a definable subset is generic if and only if it has maximal dimension. 
The term generous was introduced by Jaligot in \cite{Jal_never} to show some conjuguation theorem. The aim of this section is to say which definable subgroups are genereous. We are showing that for $K$ a $p$-adically closed field the only one generous Cartan subgroup of $SL_2(K)$ is $Q_1$ up to conjugacy. That generalize the same result for real closed fields shown in \cite{cartan}. 

\begin{defi}
\begin{itemize}
	\item A part $X$ in a group $G$ is said generic if $G$ can be covered by finitely many translates of $X$ : 
	$$G=\bigcup_{i=1}^n g_i\cdot X$$
	\item $X$ is generous if the union of its conjugate $X^G=\bigcup_{g\in G}X^g$ is generic.
\end{itemize}
\end{defi}

\begin{rmk}
If $G$ is a definable group in a $p$-adically closed field, then genericity imply of being of maximal dimension. The converse is false : $\dim \Zp=\dim \Qp$ but $\Zp$ is not generic in $(\Qp,+)$. 
\end{rmk}

Let us begin by a general proposition true for every valued field.
\begin{prop}
Let $(K,v)$ be valued field.
\begin{enumerate}
  \item The set $W=\{A\in SL_2(K)\mid v(tr(A))<0\}$ is generic in $SL_2(K)$.
  \item The set $W'=\{A\in SL_2(K) \mid v(tr(A))\geq 0\}$ is not generic in $SL_2(K)$.\end{enumerate}
\end{prop}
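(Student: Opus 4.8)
The plan is to prove each part by an explicit trace computation; the only external input is that a valuation ring is integrally closed in its fraction field. Fix $\pi\in K$ with $v(\pi)>0$; we thus assume $v$ is nontrivial (for the trivial valuation $W=\emptyset$ and the statement fails). Write a general element of $SL_2(K)$ as the matrix with rows $(a,b)$ and $(c,d)$, so $\mathrm{tr}=a+d$ and $ad-bc=1$.

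For (1), I will exhibit seven left translates of $W$ covering $SL_2(K)$. Given such an $A$: if $v(a+d)<0$ then $A\in W$. Otherwise $v(\mathrm{tr}\,A)\ge 0$; if moreover $v(c)<0$, the trace of $\begin{pmatrix}1&1\\0&1\end{pmatrix}A$ equals $(a+d)+c$, which has valuation $v(c)<0$, so $A$ lies in a fixed translate of $W$; symmetrically if $v(b)<0$, using $\begin{pmatrix}1&0\\1&1\end{pmatrix}$. In the remaining case $v(a+d)\ge 0$, $v(b)\ge 0$, $v(c)\ge 0$, we have $ad=1+bc\in\mathcal{O}$, so $a$ and $d$ are roots of the monic polynomial $X^2-(a+d)X+ad\in\mathcal{O}[X]$, hence integral over $\mathcal{O}$, hence in $\mathcal{O}$; thus $A\in SL_2(\mathcal{O})$. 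Reducing modulo the maximal ideal gives an invertible matrix over the residue field, so one of $a,b,c,d$ is a unit of $\mathcal{O}$, and left-multiplying by a suitable one of the monomial matrices $\begin{pmatrix}\pi^{-1}&0\\0&\pi\end{pmatrix}$, $\begin{pmatrix}\pi&0\\0&\pi^{-1}\end{pmatrix}$, $\begin{pmatrix}0&-\pi\\\pi^{-1}&0\end{pmatrix}$, $\begin{pmatrix}0&\pi^{-1}\\-\pi&0\end{pmatrix}$ yields a matrix whose two diagonal entries have valuations $-v(\pi)$ and $\ge v(\pi)$, hence whose trace has valuation $-v(\pi)<0$. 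This covers every case, so $W$ is generic.

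For (2), assume toward a contradiction that $SL_2(K)=\bigcup_{i=1}^{n}g_iW'$ and set $h_i=g_i^{-1}$. I will use the one-parameter family
$$A_t=\begin{pmatrix}t & t^2\\ -t^{-1} & t^{-1}-1\end{pmatrix}\in SL_2(K)\qquad(t\in K^{\times}),$$
for which direct multiplication gives, for $h=\begin{pmatrix}p&q\\r&s\end{pmatrix}$,
$$\mathrm{tr}(hA_t)=rt^2+pt-s+(s-q)t^{-1}.$$
The crucial observation is that $h\in SL_2(K)$ forces $(p,r)\neq(0,0)$ (otherwise $\det h=0$), so in this Laurent polynomial the coefficient of $t^2$, or failing that of $t$, is nonzero; its highest exponent is therefore $\ge 1$, and an elementary ultrametric estimate gives, for each $h$, some $\gamma_h\in\Gamma$ with $v(\mathrm{tr}(hA_t))<0$ whenever $v(t)<\gamma_h$. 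Since $\Gamma$ is nontrivial I may choose $t\in K^{\times}$ with $v(t)$ below every $\gamma_{h_i}$; then $A_t\in SL_2(K)$ and $v(\mathrm{tr}(g_i^{-1}A_t))<0$, i.e. $g_i^{-1}A_t\notin W'$, for all $i$, so $A_t\notin\bigcup_i g_iW'$, a contradiction. Hence $W'$ is not generic.

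I do not expect a serious difficulty. What requires care is the ultrametric bookkeeping --- in each instance one must check that the putative leading monomial is \emph{strictly} smallest in valuation, so that the trace's valuation equals exactly that of the monomial --- together with the structural remark that $\det h=1$ is precisely what excludes the degenerate shapes of $h$ against which the family $A_t$ (and, likewise, the monomial translates of part (1)) would be powerless. Incidentally, the same family $A_t$ could also run the proof of (1), but there the case analysis is more economical.
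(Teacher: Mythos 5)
Your proof is correct, and on both counts it takes a genuinely different route from the paper's. For (1) the paper covers $SL_2(K)$ by only four translates of $W$ (by $I$, by $\left(\begin{smallmatrix}0&1\\-1&0\end{smallmatrix}\right)$, by $\mathrm{diag}(a^{-1},a)$ and by an antidiagonal matrix, with $v(a),v(b)>0$) and derives a contradiction by estimating the valuations of all four entries of a putative uncovered matrix until $v(\det M)>0$; your seven-translate version is less economical but more structural, and reducing the residual case to $SL_2(\mathcal{O})$ via integral closedness of the valuation ring and then passing to the residue field is a clean way to organize it. The real divergence is in (2): the paper tests the diagonal family $M_x=\mathrm{diag}(x,x^{-1})$ and computes $\mathrm{tr}(A^{-1}M_x)=dx+ax^{-1}$, claiming negative valuation for $v(x)$ large --- but that step silently assumes $a\neq 0$, and in fact the single translate $\omega W'$ with $\omega=\left(\begin{smallmatrix}0&1\\-1&0\end{smallmatrix}\right)$ already contains every $M_x$, since $\mathrm{tr}(\omega^{-1}M_x)=0$; so the diagonal family cannot by itself witness non-genericity. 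Your family $A_t$ is built precisely so that the coefficient of degree $2$ or $1$ in the Laurent polynomial $\mathrm{tr}(hA_t)=rt^2+pt-s+(s-q)t^{-1}$ is forced to be nonzero by $\det h\neq 0$, which handles exactly the degenerate translates the paper overlooks; your part (2) is therefore not merely a different route but a repair of a genuine gap in the paper's argument. Your observation that the statement requires $v$ to be nontrivial is also correct and is only implicit in the paper (through the choice of $a,b$ with positive valuation).
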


\begin{proof}
1. 
We consider the matrices : 
$$A_1 = I ,\quad A_2= \left(\begin{array}{cc}0 & 1 \\-1 & 0\end{array}\right), \quad A_3=\left(\begin{array}{cc}a^{-1} & 0 \\0 & a\end{array}\right) \quad \mbox{ and } \quad A_4=\left(\begin{array}{cc}0 & -b^{-1} \\b & 0\end{array}\right)$$
with $v(a)>0$ and $v(b)>0$.

We show that $SL_2(K)=\bigcup_{i=1}^{4} A_iW$.
Suppose there exists  $$M=\left(\begin{array}{cc}x & y \\u & t\end{array}\right)\in SL_2(K)$$ such that $M\nin \bigcup_{i=1}^{4} A_iW$.

Since $M\nin A_1W\bigcup A_2W$, we have $x+t =\varepsilon$ and $y-u=\delta$ with $v(\varepsilon)\geq 0$ and $v(\delta)\geq 0$. 
Since $M\nin A_3W$, we have $ax+a^{-1}t=\eta$ with $v(\eta)\geq 0$.
We deduce $t=\frac{\eta-a\varepsilon}{a^{-1}-a}$.
Similarly, it follows from $M\nin A_4W$ that $u=\frac{\theta-b\delta}{b^{-1}-b}$ with some $\theta$ such that $v(\theta)\geq0$. 

Since $v(a)>0$, we have $v(a+a^{-1})<0$. 
From $v(\eta - a\varepsilon)\geq \min\{v(\eta);v(a\varepsilon)\}\geq0$,
we deduce that $v(t)=v(\frac{\eta-a\varepsilon}{a+a^{-1}})=v(\eta-a\varepsilon)-v(a+a^{-1})>0$. 
Similarly $v(u)>0$. 
It follows that $v(x)=v(\varepsilon -t)\geq 0$ and $v(y)\geq0$.

Therefore $v(det(M))=v(xt-uy)\geq \min\{v(xt),v(uy)\}>0$ and thus $det(M)\neq 1$, a contradiction .

2.  
We show that the family of matrices $(M_x)_{x\in K^{\times}}$ cannot be covered by finitely many $SL_2(K)$-translates of $W'$, where :
$$M_x =\left(\begin{array}{cc}x & 0 \\0 & x^{-1}\end{array}\right)$$
Let  $A=\left(\begin{array}{cc}a & b \\c & d\end{array}\right) \in SL_2(K)$. 
Then $tr(A^{-1}M_x)=dx+ax^{-1}$. If $v(x)>\max\{|v(a)|, |v(d)|\}$ then $v(tr(A^{-1}M_x))<0$ and $M_x\nin AW'$. 

Therefore for every finite family $\{A_j\}_{i\leq n}$, there exists $x\in K$ such that $M_x\nin \bigcup_{j=1}^{n} A_jW'$.
\end{proof}

\begin{rmk}
We remark that the sets $W$ and $W'$ form a partition of $SL_2(K)$. They are both definable in the field language if the valuation $v$ is definable in $K$.
\end{rmk}

We focus now on $\Qp$, with the notation from previous section, we define the angular component $ac:\Qp\longrightarrow \Fp$ by $ac(x)=res(p^{-v_p(x)}x)$. Thus, if $p\neq 2$ an element $x\in\Qp^{\times}$ is a square if and only if $v_p(x)$ is even and $ac(x)$ is a square in $\Fp$. For $p=2$, an element $x\in \mathbb{Q}_2$ can be written $x=2^nu$ with $n\in \Z$ and $u\in\Z_2^{\times}$, then $x$ is a square if $n$ is even and $u\equiv 1 \mod 8$ \cite{JPS}.

\begin{lem}\label{W}
$W\subseteq Q_1^{SL_2(\Qp)}$ and for $\delta\in \Qp^{\times}\backslash (\Qp^{\times})^2$ and $\mu\in GL_2(\Qp)$, $Q_{\delta}^{\mu \cdot SL_2(\Qp)}\subseteq W'$, moreover $U^{SL_2(\Qp)}\subseteq W'$.
\end{lem}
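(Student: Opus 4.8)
The plan is to reduce everything to Proposition \ref{tr} together with one valuation observation: if $A\in SL_2(\Qp)$ has $v(tr(A))<0$, then $tr(A)^2-4$ is a nonzero square in $\Qp$. I would prove this first. From $v(tr(A))\leq -1$ one gets $v(tr(A)^2)=2v(tr(A))<0\leq v(4)$, so $v(tr(A)^2-4)=2v(tr(A))$ is finite and $tr(A)^2-4\neq 0$. Writing
\[
tr(A)^2-4 = tr(A)^2\bigl(1-4\,tr(A)^{-2}\bigr),
\]
we have $v\bigl(4\,tr(A)^{-2}\bigr)=v(4)-2v(tr(A))\geq 2$ when $p\neq 2$ and $\geq 4$ when $p=2$. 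Hence $1-4\,tr(A)^{-2}\in 1+p\Zp$ (resp. $1+8\Z_2$ if $p=2$), which is contained in $(\Zp^{\times})^2$; since $tr(A)^2$ is a square, $tr(A)^2-4\in(\Qp^{\times})^2$.

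Granting this, the first inclusion is immediate: if $A\in W$, then $v(tr(A))<0$, so $tr(A)^2-4$ is a nonzero square, and Proposition \ref{tr}(1) yields $A\in Q_1^{SL_2(\Qp)}$. For the second inclusion, fix $\delta\in\Qp^{\times}\setminus(\Qp^{\times})^2$ and $\mu\in GL_2(\Qp)$, and suppose some $A\in Q_{\delta}^{\mu\cdot SL_2(\Qp)}$ is not in $W'$, i.e. $v(tr(A))<0$. Since $\mu g\in GL_2(\Qp)$ for every $g\in SL_2(\Qp)$, we have $A\in Q_{\delta}^{GL_2(\Qp)}$, so by the proof of Proposition \ref{tr}, $tr(A)^2-4\in\delta\cdot(\Qp^{\times})^2\cup\{0\}$. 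But the observation gives that $tr(A)^2-4$ is a nonzero square, whence $\delta\in(\Qp^{\times})^2$, a contradiction. Thus $A\in W'$. Finally, any element of $U^{SL_2(\Qp)}$ is conjugate to an element of $U$, hence has trace $\pm 2$, so its trace has valuation $v(2)\geq 0$; this gives $U^{SL_2(\Qp)}\subseteq W'$.

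I do not expect a serious obstacle: the content is entirely the valuation bookkeeping in the first paragraph. The only point needing care is keeping the estimate on $v\bigl(1-4\,tr(A)^{-2}\bigr)$ strong enough to invoke the description of squares uniformly, which is why the bounds $2$ and $4$ are kept separate for odd $p$ and for $p=2$; one could alternatively quote Hensel's lemma for the equation $X^2=1-4\,tr(A)^{-2}$, whose reduction has a simple root, to avoid splitting into cases.
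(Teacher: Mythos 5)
Your proof is correct and follows essentially the same route as the paper's: both reduce the lemma to the single observation that $v(tr(A))<0$ forces $tr(A)^2-4$ to be a nonzero square, and then conclude via Proposition \ref{tr} (with the $Q_{\delta}$ and $U$ inclusions following by complementarity). The only difference is cosmetic: you verify the square condition by factoring $tr(A)^2-4=tr(A)^2\bigl(1-4\,tr(A)^{-2}\bigr)$ and using that $1+p\Zp$ (resp.\ $1+8\Z_2$) consists of squares, while the paper uses the angular-component criterion for squares in $\Qp$ --- the same computation in different clothing.
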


\begin{proof}
Let be $A\in SL_2(\Qp)$ with $v_p(tr(A))<0$.
\\For $p\neq 2$, since $v_p(tr(A))<0$,  $v_p(tr(A)^2-4)=2v_p(tr(A))$ and $ac(tr(A)^2-4)=ac(tr(A)^2)$, so $tr(A)^2-4$ is a square in $\Qp$. 
\\For $p=2$, we can write $tr(A)=2^nu$ with $n\in \Z$ and $u\in \Zp^{\times}$. Then $tr(A)^2-4=2^{2n}(u^2-4\cdot 2^{-2n})$. Since $n\leq -1$,  $u^2-4\cdot 2^{-2n} \equiv u^2 \equiv 1 (\mbox{mod } 8)$, so $tr(A)^2-4\in (\mathbb{Q}_2^{\times})^2$.

In all cases, by the proposition \ref{tr}, $W\subseteq Q_1^{SL_2(\Qp)}$ and, by complementarity, $Q_{\delta}^{\mu\cdot SL_2(\Qp)}\subseteq W'$, and $U^{SL_2(\Qp)}\subseteq W'$.
\end{proof}

We can now conclude with the following corollary, similar to \cite[Remark 9.8]{cartan}:

\begin{cor} \label{generous} Let $K$ be a $p$-adically closed field.
\begin{enumerate}
  \item The Cartan subgroup $Q_1$ is generous in $SL_2(K)$.
  \item The Cartan subgroups $Q_{\delta}^{\mu}$ (for $\delta\in K^{\times}\backslash (K^{\times})^2$ and $\mu\in GL_2(K)$) are not generous in $SL_2(K)$. 
  \item $U$ is not generous.
\end{enumerate}
\end{cor}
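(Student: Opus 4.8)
The plan is to deduce all three assertions from two facts already established: the proposition immediately preceding Lemma \ref{W}, which states that $W=\{A\in SL_2(K)\mid v(tr(A))<0\}$ is generic in $SL_2(K)$ while $W'=\{A\in SL_2(K)\mid v(tr(A))\geq 0\}$ is not; and the soft observation that a set containing a generic set is itself generic, while a subset of a non-generic set is non-generic (a covering $G=\bigcup_i g_iX$ with $X\subseteq Y$ is a fortiori a covering by translates of $Y$). Since ``generous'' means that the whole union of conjugates $X^{SL_2(K)}$ is generic, everything reduces to locating $W$ and $W'$ relative to the conjugacy classes of $Q_1$, the $Q_\delta^\mu$, and $U$.

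For part (1) I would first transfer from $\Qp$ to an arbitrary $p$-adically closed field $K$ the implication ``$v(tr(A))<0\Rightarrow tr(A)^2-4\in(K^\times)^2$''. This is a first-order sentence in the language of rings augmented by the $P_n$ predicates, since the valuation ring and squareness are definable, so it holds in every $p$-adically closed field because it holds in $\Qp$, where it is precisely the computation carried out in the proof of Lemma \ref{W} (with the case $p=2$ handled via angular components). Combined with Proposition \ref{tr}(1) this gives $W\subseteq Q_1^{SL_2(K)}$; as $W$ is generic, $Q_1^{SL_2(K)}$ is generic, i.e.\ $Q_1$ is generous.

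For parts (2) and (3) no further computation is needed. By the partition of Corollary \ref{union}, the sets $(Q_1\setminus\{I,-I\})^{SL_2(K)}$, $(U\setminus\{I,-I\})^{SL_2(K)}$ and the $(Q_\delta^{\mu_i}\setminus\{I,-I\})^{SL_2(K)}$ are pairwise disjoint, so from $W\subseteq Q_1^{SL_2(K)}$ it follows that $(Q_\delta^\mu)^{SL_2(K)}=Q_\delta^{\mu\cdot SL_2(K)}$ and $U^{SL_2(K)}$ are disjoint from $W$, hence contained in $W'$ (note $\pm I\in W'$ as $v(\pm 2)\geq 0$). Since $W'$ is not generic, neither of these unions of conjugates is generic, so $Q_\delta^\mu$ and $U$ are not generous.

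The argument has no genuinely hard step; the only points demanding a little care are the transfer of the single trace inequality — one must check it follows from a first-order sentence valid in $\Qp$ — and keeping in mind throughout that genericity here concerns the union $X^{SL_2(K)}$ of all conjugates, not a single conjugate of $X$. Everything else is the ``(non-)genericity is inherited upward/downward'' bookkeeping, together with Proposition \ref{tr} and Corollary \ref{union}.
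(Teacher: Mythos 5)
Your proof is correct and follows essentially the same route as the paper: both arguments rest on the containments $W\subseteq Q_1^{SL_2}$ and $Q_{\delta}^{\mu\cdot SL_2},\,U^{SL_2}\subseteq W'$ (the latter obtained by complementarity from the partition of Corollary \ref{union}) combined with the genericity of $W$ and the non-genericity of $W'$. The only, harmless, difference is where elementary equivalence is invoked: you transfer the first-order trace--square implication from $\Qp$ to $K$ and then apply the $W$/$W'$ proposition directly over $K$, whereas the paper first concludes genericity of $Q_1^{SL_2(\Qp)}$ inside $\Qp$ and transfers that genericity statement to $K$.
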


\begin{proof}
Lemma \ref{W} shows that $Q_1^{SL_2(\Qp)}$ is generic. $Q_1^{SL_2(\Qp)}$ is definable by free parameter formula $\varphi(x)$, so 
$$\Qp \models \exists a_1, ... , a_n \in S \quad \forall x\in S\quad \bigvee _{i=1}^n \varphi(a_i^{-1}x)$$
$K$ satisfies the same formula and $Q_1$ is generous in $SL_2(K)$. We reason similarly for $Q_{\delta}$ and $U$.
\end{proof}

\begin{rmk}
If $K$ is a field elementary equivalent to a finite extension of $\Qp$, there is the same caracterisation of square in $K^{\times}$, so the same result than Corollary \ref{generous} is true.
\end{rmk}

%\nocite{*}
\bibliographystyle{plain}
\bibliography{truc}

\begin{thebibliography}{10}

\bibitem{cartan}
Elias Baro, Eric Jaligot, and Margarita Otero.
\newblock Cartan subgroups of groups definable in o-minimal structures.
\newblock arXiv:1109.4349v2 [math.GR], 2011.

\bibitem{borel}
Armand Borel.
\newblock {\em Linear Algebraic Groups}.
\newblock Graduate Texts in Mathematics. Springer, second enlarged edition
  edition, 1991.

\bibitem{cherlin}
Gregory Cherlin.
\newblock Groups of small morley rank.
\newblock {\em Annals of Mathematical Logic}, 17:1--28, 1979.

\bibitem{chevalley}
Claude Chevalley.
\newblock {\em Th{\'e}orie des groupes de Lie II. Groupes alg{\'e}briques}.
\newblock Hermann, Paris, 1951.

\bibitem{Frec}
Olivier Fr{\'e}con.
\newblock Conjugacy of carter subgroups in groups of finite morley rank.
\newblock {\em Journal of Mathematical Logic}, 8(1):41--92, 2008.

\bibitem{FrecJal}
Olivier Fr{\'e}con and Eric Jaligot.
\newblock The existence of carter subgroups in groups of finite morley rank.
\newblock {\em Journal of Group Theory}, 8:623--633, 2005.

\bibitem{SL2R}
Jakub Gismatullin, Davide Penazzi, and Anand Pillay.
\newblock Some model theory of $sl(2,\mathbb{R})$.
\newblock arXiv : 1208.0196, 2012.

\bibitem{hodges}
Wilfrid Hodges.
\newblock {\em Model Theory}, volume~42 of {\em Encyclopedia of Mathematics and
  its Applications}.
\newblock Cambridge University Press, 1993.

\bibitem{humphreys}
James~E. Humphreys.
\newblock {\em Linear Algebraic Groups}.
\newblock Graduate Texts in Mathematics. Springer, 1998.

\bibitem{Jal_never}
Eric Jaligot.
\newblock Generix never gives up.
\newblock {\em J. Symbolic Logic}, 71(2):599--610, 2006.

\bibitem{MacEQ}
Angus MacIntyre.
\newblock On definable subsets of $p$-adic fields.
\newblock {\em The Journal of Symbolic Logic}, 41(3):605--610, 1976.

\bibitem{Marker}
David Marker.
\newblock {\em Model Theory : An Introduction}, volume 217 of {\em Graduate
  Texts in Mathematics}.
\newblock Springer, 2002.

\bibitem{Samaria}
Samaria Montenegro.
\newblock Pseudo real field, pseudo $p$-adic closed fields and $ntp_2$.
\newblock arXiv:1411.7654, 2014.

\bibitem{pillayfields}
Anand Pillay.
\newblock On fields definable in $\mathbb{Q}_p$.
\newblock {\em Archive for Mathematical Logic}, 29:1--7, 1989.

\bibitem{Poizat_stable}
Bruno Poizat.
\newblock {\em Groupes stables}.
\newblock Nur al-Mantiq wal-Ma'rifah [Light of Logic and Knowledge], 2. Bruno
  Poizat, Lyon, 1987.
\newblock Une tentative de conciliation entre la g{\'e}om{\'e}trie
  alg{\'e}brique et la logique math{\'e}matique. [An attempt at reconciling
  algebraic geometry and mathematical logic].

\bibitem{JPS}
Jean-Pierre Serre.
\newblock {\em Cours d'arithm{\'e}tique}.
\newblock puf, 1970.

\bibitem{Serrearbres}
Jean-Pierre Serre.
\newblock {\em Arbres, amalgames, $SL_2$}.
\newblock Number~46 in ast{\'e}risque. Soci{\'e}t{\'e} Math{\'e}matqiue de
  France, third edition, 1983.

\bibitem{Suzuki}
Michio Suzuki.
\newblock {\em Group Theory II}, volume 248 of {\em Grundlehren der
  matematischen Wissenschaften}.
\newblock Springer, 1986.

\bibitem{dimvdD}
Lou van~den Dries.
\newblock Dimension of definable sets, algebraic boundness and henselian
  fields.
\newblock {\em Annals of Pure and Applied Logic}, 45:189--209, 1989.

\bibitem{vdDHasMac}
Lou van~den Dries, Dierdre Haskell, and Dugald Macpherson.
\newblock One-dimensional $p$-adic subanalytic sets.
\newblock {\em Journal of London Mathematical Society}, 49:1--20, 1999.

\bibitem{Dries}
Lou van~den Dries and Philip Scowcroft.
\newblock On the structure of semialgebraic sets over $p$-adic fields.
\newblock {\em The Journal of Symbolic Logic}, 53(4):1138--1164, 1988.

\end{thebibliography}

\end{document}